\numberwithin{equation}{section}
\newcommand{\udots}{\mathinner{\mskip1mu\raise1pt\vbox{\kern7pt\hbox{.}}
\mskip2mu\raise4pt\hbox{.}\mskip2mu\raise7pt\hbox{.}\mskip1mu}}
\newcommand{\Jblock}[1]
           {
              \ensuremath{              
              \begin{array}{cc}
                   0 & #1  \\
                     -#1 & 0 
              \end{array}
              }
           }
\newcommand{\inverseJblock}[1]
           {
              \ensuremath{              
              \begin{array}{cc}
                   0 & -#1  \\
                   #1 & 0 
              \end{array}
              }
           }  
\newcommand{\fourBlock}[4]
           {
               \ensuremath{
               \begin{array}{cccc}
                  0   &    0    & #3 &  0  \\
                  0   &    0    &  0   & #4 \\
                  #1  &    0    &  0   &  0 \\
                  0   &   #2    &  0   &  0 
               \end{array}
               }
           }
\newcommand{\ZeroBlock}[1]
           {
              \ensuremath{              
              \begin{array}{cc}
                   0 & 0 \\
                   0 & 0 
              \end{array}
              }
            }
\newcommand{\Tr}{\mathrm{Tr}}
\newcommand{\C}{\mathbb{C}}
\newcommand{\Z}{\mathbb{Z}}
\newcommand{\SO}{\mathrm{SO}}
\newcommand{\SL}{\mathrm{SL}}
\def\ga{\mathfrak{a}}
\def\gg{\mathfrak{g}}
\def\gh{\mathfrak{h}}
\def\gk{\mathfrak{k}}
\def\gl{\mathfrak{l}}
\def\gm{\mathfrak{m}}
\def\gn{\mathfrak{n}}
\def\gp{\mathfrak{p}}
\def\gt{\mathfrak{t}}
\def\gu{\mathfrak{u}}
\def\gF{\mathfrak{F}}
\def\gB{\mathfrak{B}}
\def\C{\mathbb{C}}
\def\H{\mathbb{H}}
\def\N{\mathbb{N}}
\def\R{\mathbb{R}}
\def\Z{\mathbb{Z}}
\def\cA{\mathcal{A}}
\def\cB{\mathcal{B}}
\def\cH{\mathcal{H}}
\def\cK{\mathcal{K}}
\def\cS{\mathcal{S}}
\newtheorem{theorem}{Theorem}
\newtheorem{lemma}[theorem]{Lemma}
\newtheorem{corollary}[theorem]{Corollary}
\newtheorem{definition}[theorem]{Definition}
\numberwithin{theorem}{section}
\newcommand{\fa}{\mathfrak{a}}
\newcommand{\fg}{\mathfrak{g}}
\newcommand{\fk}{\mathfrak{k}}
\newcommand{\Rank}{\mathrm{Rank}}
\newcommand{\Sp}{\mathrm{Sp}}
\newcommand{\Spin}{\mathrm{Spin}}
\newcommand{\SU}{\mathrm{SU}}
\newcommand{\U}{\mathrm{U}}
\newcommand{\diag}{\mathrm{diag}}
\newcommand{\so}{\mathfrak{so}}
\newcommand{\su}{\mathfrak{su}}
\newcommand{\lsp}{\mathfrak{sp}}
\newcommand{\lsl}{\mathfrak{sl}}
\newcommand{\bmid}{\,\, \right|\,\,}
\newcommand{\Lie}{\mathrm{Lie}}
\def\sideremark#1{\ifvmode\leavevmode\fi\vadjust{\vbox to0pt{\vss% the remark
 \hbox to 0pt{\hskip\hsize\hskip1em%                          will appear only
\vbox{\hsize2cm\tiny\raggedright\pretolerance10000 %          on the side
 \noindent #1\hfill}\hss}\vbox to8pt{\vfil}\vss}}} %          in 2cm
\begin{document}

\title[Conical Representations]{Conical Representations for Direct Limits of Symmetric Spaces}

\author{Matthew Dawson}
\address{Department of Mathematics, Louisiana State University, Baton Rouge, LA
70803, U.S.A.}
\email{mdawso5@math.lsu.edu}
\author{Gestur \'{O}lafsson}
\address{Department of Mathematics, Louisiana State University, Baton Rouge,
LA 70803, U.S.A.}
\email{olafsson@math.lsu.edu}

\subjclass[2000]{43A85, 53C35, 22E46}
\keywords{Injective limits; Compact symmetric spaces;
Spherical representations; Spherical functions}

\begin{abstract}
We extend the definition of conical representations for Riemannian symmetric spaces to a certain class of infinite-dimensional Riemannian symmetric spaces.   Using an infinite-dimensional version of Weyl's Unitary Trick, there is a correspondence between smooth representations of infinite-dimensional noncompact-type Riemannian symmetric spaces and smooth representations of infinite-dimensional compact-type symmetric spaces.  We classify all smooth conical representations which are unitary on the compact-type side.  Finally, a new class of non-smooth unitary conical representations appears on the compact-type side which has no analogue in the finite-dimensional case.  We classify these representations and show how to decompose them into direct integrals of irreducible conical representations.
\end{abstract}

\maketitle
 % Finally, their relationships with spherical representations and admissible representations (in the sense of Olshanskii) is determined.

\pagenumbering{arabic}
%\doublesize %puts doublespace just in text not in the headings
%\input{exabst}

\section{Introduction}

Harmonic analysis and representation theory of topological groups have been very well-studied over the past century and have produced fruitful applications to areas such as PDEs and mathematical physics.  Two broad developments in the theory are brought together in this paper: first, Helgason's theory of conical representations for noncompact-type Riemannian symmetric spaces and second, the more recent study of representation theory and harmonic analysis on infinite-dimensional Lie groups.

In the theory of Riemannian symmetric spaces, there are two crucially important dualities.  One is the duality between compact-type and noncompact-type Riemannian symmetric spaces.  The other is the duality between a noncompact-type Riemannian symmetric space and its horocycle space.  These dualities are intimately connected to the representation theory of their corresponding isometry groups (see \cite{He1970, He2, He3}).  For instance, Weyl's unitary trick sets up a correspondence between finite-dimensional spherical representations for a compact-type symmetric space and finite-dimensional spherical representations for its corresponding noncompact-type symmetric space.  In turn, the finite-dimensional spherical representations for a noncompact-type symmetric space are identical to the conical representations for its corresponding horocycle space.  Furthermore, analysis on a noncompact-type Riemannian symmetric space and its corresponding horocycle space are connected by the famous Radon transform.

More recently, much progress has been made in the theory of infinite-dimensional Lie groups, that is, groups which are modeled on locally convex topological vector spaces in the same way that finite-dimensional Lie groups are modeled on finite-dimensional vector spaces.  The simplest and ``smallest'' infinite-dimensional groups are the \textit{direct-limit groups}, which are constructed by taking unions of increasing chains of finite-dimensional Lie groups.  In a similar way, one can form an infinite-dimensional symmetric space by forming a direct limit of finite-dimensional symmetric spaces.  Representation theory and even harmonic analysis questions for direct-limit groups and direct-limit symmetric spaces have been studied in some depth (e.g., see \cite{Bel, BOl, Ol1978, Ol1984, OW2011, OW2013, W2011, W2009, W2010} for just a few examples).  A good overview of the field may be found in \cite{Ol1990}.

In particular, spherical representations for infinite-dimensional symmetric spaces are well-studied in the literature (e.g., see \cite{DOW, F2008, Ol1984}).  On the other hand, the theory of conical representations for infinite-dimensional Riemannian symmetric spaces appears to have been  largely neglected up to this point.  However, understanding conical representations is a necessary first step to constructing and studying a Radon transform for infinite-dimensional symmetric spaces.  We point the reader to \cite{HO}, for instance, in which the authors define and study a Radon transform between regular functions on certain direct limits of Riemannian symmetric spaces and regular functions on the corresponding horocycle spaces.

In this paper, we classify all of the smooth conical representations for direct limits of noncompact-type Riemannian symmetric spaces that satisfy certain technical conditions.  Combined with the results of \cite{DOW}, we see that for infinite-dimensional symmetric spaces of infinite rank, none of the smooth conical representations are spherical, a situation which is in stark contrast with the classical result of Helgason that all finite-dimensional representations are spherical if and only if they are conical.  We further demonstrate the existence, in certain cases, of nonsmooth unitary conical representations for direct limits of compact-type Riemannian symmetric spaces.  This is a phenomenon which has no analogue for finite-dimensional symmetric spaces.  We also show how these conical representations decompose into direct integrals of irreducible representations.

The outline is as follows.  In Section~2, we quickly review the notation and relevant results on finite-dimensional noncompact-type Riemannian symmetric spaces.  In Section~3, we review the notions of \textit{propagated} direct limits of symmetric spaces (introduced in \cite{OW2013, W2011,W2010}) and \textit{admissible} direct limits of symmetric spaces (introcuced in \cite{HO}), which provide the technical context for our results.  It remains an open question whether every propagated direct limit is admissible, but in the appendix we have included a case-by-case proof that each of the classical examples of propagated direct limits of symmetric spaces are admissible.  In Section~4, we continue by reviewing relevant results on representations of direct-limit groups and prove some new results which we will need later in the paper.  Section~5 contains the main classification theorem for conical representations.  Finally, in Section~6, we prove some decomposition theorems for general (not necessarily irreducible) conical representations.  We end the paper with a summary in Section 7 and the aforementioned appendix containing the proof of admissiblility for the classical direct limits of symmetric spaces.

%Because the class of all topological groups is far too general to admit many useful general theorems, a few simplifying assumptions are usually made when studying them.  The assumptions come in two broad classes:  separability assumptions, which assure that the topology is rich enough to distinguish elements of the group as well as compactness and countability assumptions, which assure that the group is not ``too big.''

%First of all, topological groups are nearly always assumed to be Hausdorff.  This is a very modest assumption that essentially all interesting groups satisfy although the Zariski topology on an algebraic group is a notable exception.   In fact, for a group to be Hausdorff it is sufficient to assume only that the singleton containing the identity element is closed.  In other words, the topology contains enough information to distinguish points on the group from each other.  For instance, one can show that every Hausdorff group is, in fact, completely regular.

%By far the most important assumption commonly made for topological groups, though, especially for the purpose of harmonic analysis, is local compactness.  Locally compact groups (and \textit{only} locally compact groups) admit Borel measures invariant under the group action.  This, in turn, provides the existence of a group $C^*$-algebra that carries all of the information from the representation theory of the group.  Another common assumption is separability.  Groups which are both separable and locally compact admit an abstract Plancherel formula.

\section{Finite-Dimensional Riemannian Symmetric Spaces}
Riemannian symmetric spaces form a class of particularly well-behaved homogeneous spaces with a rich structure theory and relatively well-understood harmonic analysis.  %Among other important properties, they possess a Riemannian metric that is invariant under the action of the translation group. Furthermore, the isotropic subgroup is fixed under an involution on the translation group, which essentially forces the regular representations on Riemannian symmetric spaces to have multiplicity-free direct integral decompositions. 
In particular, there is a beautiful duality between \textit{compact-type} and \textit{noncompact-type} Riemannian symmetric spaces.

In addition, the noncompact-type Riemannian symmetric spaces possess an associated homogeneous space called a \textit{horocycle space}.  The relationship between a Riemannian symmetric space and its horocycle space is analogous to, for instance, the relationship between points and hyperplanes in $\R^n$, or the relationship between points and horocycles of hyperbolic space (it is for this reason that the terminology \textit{horocycle space} was originally chosen).

In the late 1950s, Gelfand and Graev developed a ``horospherical method'' which relates harmonic analysis on the noncompact-type Riemannian symmetric space $\SL(n,\R)/\SU(n)$ and harmonic analysis on its horocycle space (see \cite[p. 283--287]{MackeyBook}).  These ideas were generalized to all noncompact-type Riemannian symmetric spaces and developed quite completely in the pioneering work of Helgason (see \cite{He1970}, for instance).
The relationship between symmetric spaces and horocycle spaces, together with its implications for representation theory, provides the primary context for this paper.

See \cite{He1} for a comprehensive overview of the structure theory for Riemannian symmetric spaces.  See also \cite{He2} and \cite{He3} for applications of representation theory to analysis on Riemmanian symmetric spaces and horocycle spaces, respectively.  An overview of this theory from the perspective of unitary group representations may be found in \cite{OS}.

\subsection{Basic Definitions}
\label{basicStructure}
\label{definitionsSection}
Suppose that $G$ is a semisimple Lie group over $\R$ with finite center and that $K$ is a closed subgroup.  Furthermore, we suppose that there is an involutive automorphism $\theta: G\rightarrow G$ such that
\begin{equation}
\label{symmetricSpaceCondition}
   (G^\theta)_0 \leq K \leq G^\theta,
\end{equation}
where $G^\theta$ is the fixed-point subgroup for $\theta$ and $(G^\theta)_0$ is the connected component of the identity for $G^\theta$.  Then $G/K$ is said to be a \textbf{symmetric space}.

The involution $\theta$ differentiates to an involution $\theta: \gg \rightarrow \gg$ of the Lie algebra $\gg$ of $G$.  By (\ref{symmetricSpaceCondition}), the $+1$-eigenspace for $\theta$ is just $\gk$ (i.e., the Lie algebra for $K$).  We denote the $-1$-eigenspace of $\theta$ by $\gp$.  We may write down the eigenspace decomposition
\[
 \gg = \gk \oplus \gp.
\]
Due to the fact that $\theta$ is also a Lie algebra involution, one easily computes that $[\gk,\gk] \subseteq \gk$, $[\gk,\gp]\subseteq \gp$, and $[\gp,\gp]\subseteq \gk$.  Just as $\gk$ may be naturally identified with the tangent space $T_e K$, there is a natural identification of $\gp$ with the tangent space $T_{eK} G/K$ (see \cite[p. 214]{He1}). 

If $U/K$ is a Riemannian symmetric space with $U$ compact, then it is said to be a \textbf{compact-type Riemannian symmetric space}.  On the other hand, if $G/K$ is a Riemannian symmetric space such that $G$ has no compact factors and has a finite center, then $K$ is compact and $G/K$ is said to be a \textbf{noncompact-type Riemmanian symmetric space}.  

There is a beautiful duality between compact-type and noncompact-type Riemannian symmetric spaces.  Suppose that $U/K$ is a compact-type symmetric space with involution $\theta$.  We make the further simplifying assumption throughout this paper that $U$ is simply-connected whenever we consider a compact-type symmetric space $U/K$.  In particular, this assumption implies that $K = U^\theta$ is connected.  As usual, $\theta$ admits an eigenspace decomposition:
\[
  \gu = \gk \oplus \widetilde{\gp},
\]
where $\gk=\Lie(K)$ and $\widetilde{\gp}$ are the $+1$ and $-1$ eigenspaces, respectively.  We now consider the complexified Lie algebra $\gu_\C = \gu \otimes_\R \C$ and define a new real Lie algebra $\gg$:
\[
   \gg = \gk \oplus i\widetilde{\gp} 
\]  
Next we take the unique connected complex Lie group $U_\C$ with Lie algebra $\gu_\C$ such that $U$ is the analytic subgroup of $U_\C$ corresponding to the Lie algebra $\gg\subseteq\gu_\C$. The Lie algebra involution $\theta$ on $\gu_\C$ integrates to an involution on $U_\C$ by Proposition 7.5 in \cite{Knapp}. We then consider the analytic subgroup $G\leq U_\C$  corresponding to the Lie algebra $\gg\subseteq\gu_\C$.  By Proposition 7.9 in \cite{Knapp}, we see that $G$ is a closed subgroup of $U_\C$ and has a finite center.  Putting everything together, we see that $G/K$ is a \textit{noncompact-type} Riemannian symmetric space, called the \textbf{c-dual} of $U/K$. To emphasize the symmetry between $\gu$ and $\gg$, we note that if we set $\gp=i\widetilde{\gp}$, then
\begin{align*}
\gu & =  \gk\oplus i\gp \\
\gg & =  \gk\oplus\gp.
\end{align*}

We need some well-known definitions from symmetric space theory.  Fix a noncompact-type symmetric space $G/K$.  As before, we can write $\gg = \gk \oplus \gp$.  Let $\ga$ be a maximal abelian subalgebra of $\gp$.  
For each $\alpha\in \ga^*$, let
\[
\gg_\alpha = \{X\in\gg|[H,X] = \alpha(H)X \}.
\]
We write $\Sigma = \Sigma(\gg,\ga) = \{\alpha \in \ga^*\backslash\{0\} | \gg_\alpha \neq 0\}$ for the set of all restricted roots for $(\gg,\gk)$.  We can choose a positive subsystem $\Sigma^+$ of $\Sigma$.  We denote the set of indivisible roots by $\Sigma_0$, and put $\Sigma_0^+ = \Sigma_0\cap \Sigma^+$. 

Next, we define $\gm = Z_\gk(\ga)$ and note that $\gg_0 = \gm \oplus \ga$ by the maximality of $\ga$ in $\gp$.  Thus, if we put 
\begin{align*}
&\gn = \bigoplus_{\alpha\in\Sigma^+} \gg_\alpha 
\text{ and } \overline{\gn} = \bigoplus_{\alpha\in\Sigma^+} \gg_{-\alpha} \text{, then } \\
&\gg = \gn \oplus \gm \oplus \ga \oplus \overline{\gn}.
\end{align*}
Finally, we set $N = \exp(\gn)$ and $M = Z_K(\ga)$.  Note that although $\gm$ is the Lie algebra of $M$, one generally does not have $M = \exp(\gm)$ because $M$ is typically disconnected.

Before proceeding, we lay down some standard notation which will be useful in the future.  Suppose that $\dim \ga = n$.  If $\Sigma(\gg,\ga)$ is a root system of type $A_n$, then we identify $\ga$ with $\R^n$ by setting 
$\ga=\{(x_1,\ldots ,x_{n+1})\in \R^{n+1}\mid x_1+\cdots + x_{n+1} =0\}$.  Otherwise we make the identification $\ga=\R^n$. Set
$e_1=(1,\ldots ,0,0)$, $e_2=(0,1,0,\ldots, 0)$, \ldots ,
$e_r=(0,\ldots , 0,1)$ where $r=n+1$ for $A_n$ and otherwise $n=r$. We view the vectors $e_j$  also
as elements in $\ga^*$ via the standard inner product in $\R^{n+1}$ in the case that  $\Sigma(\gg,\ga)$  is of type $A_n$ and otherwise via the standard inner product in $\R^n$. Note that in the $A_n$-type case this defines a map $\R^{n+1} \to \ga^*$ which is not injective. 

For the purpose of convenience, we choose an ordering on the vectors in $\ga^*$ (that is, a choice of $\Sigma^+$) such that the dominant weights have \textit{increasing} coefficients with respect to the ordered basis $\{e_1,\ldots e_n\}$.  Note that this is the opposite of the conventional choice. That is, we make the following choices for $\Sigma_0^+(\gg,\ga)$ according to the Dynkin diagram $\Psi$ of the root system $\Sigma(\gg, \ga)$:

\begin{center}
\begin{tabular}{|l|l|}
  \hline 
 $\Psi$ & $\Sigma_0^+(\gg,\ga)$ \\
\hline
\hline 
$ A_n$ & $\{ e_j - e_i \mid i < j\}$\\
\hline  
 $B_n$ & $\{ e_j \pm e_i \mid i < j\} \cup \{ e_i \}$\\
\hline 
$ C_n$ & $\{ e_j \pm e_i \mid i < j\} \cup \{ 2e_i \}$\\
\hline 
 $D_n$ & $\{ e_j \pm e_i \mid i < j\}$\\
 \hline
\end{tabular}
\end{center}

\subsection{Finite-Dimensional Conical and Spherical Representations}
In this section, we review the basic results on finite-dimensional conical and spherical representations.  We begin with their definitions:
\begin{definition}
Suppose that $G/K$ is a Riemannian symmetric space (of either compact or noncompact type).  A representation $(\pi,\cH)$ of $\pi$ on a Hilbert space $\cH$ is \textbf{spherical} if there is $e\in\cH\backslash\{0\}$ such that $\pi(K)e = e$, in which case $e$ is said to be a \textbf{spherical vector} for $\pi$.
\end{definition}

\begin{definition}
Suppose that $G/K$ is a Riemannian symmetric space of noncompact type.  Using the notation of Section~\ref{definitionsSection}, we say that a representation $(\pi,\cH)$ of $\pi$ on a Hilbert space $\cH$ is \textbf{conical} if there is $v\in\cH\backslash\{0\}$ such that $\pi(MN)e = e$.
\end{definition}
In harmonic analysis on the homogeneous space $G/MN$ (called the \textbf{horocycle space}), conical representations play a roll which is analogous to the roll played by spherical representations for harmonic analysis on the symmetric space $G/K$.

Note that if $U/K$ is the compact c-dual of $G/K$, then every finite-dimensional representation of $U$ extends by holomorphic continuation to a representation of $G$.  By abuse of notation, we can thus consider a finite-dimensional representation $(\pi,V)$ of $U$ to be also a representation of $G$, and vice versa.  Thus, a finite-dimensional representation of $U$ is said to be conical if the corresponding representation of $G$ is conical.

The problem of determining which finite-dimensional representations are spherical or conical is solved by the Cartan-Helgason Theorem on finite-dimensional spherical representations, which was first stated without proof in \cite{Sug}.

\begin{comment} Similarly, we see that $\gh_c = \gh \oplus i\ga$ is a $\theta$-stable Cartan subalgebra of $\gu$.
\end{comment}

\begin{theorem}[The Cartan-Helgason Theorem]
\label{CartanHelgason}
 (\cite[p. 535]{He2})
Suppose that $U/K$ is a compact-type symmetric space with c-dual $G/K$, and let $\gh$ be a Cartan subalgebra of $\gu$ containing $i\ga$, so that $\gh$ is $\theta$-stable and $\gh=\gt\oplus i\ga$ where $\gt=\gh\cap \gk$. Suppose further that $U$ is simply-connected. If $(\pi,V)$ is an irreducible representation of $U$ with highest-weight $\lambda\in i\gh^*$, then the following are equivalent:
\begin{enumerate}
\item $\pi$ is a spherical representation of $U$
\item $\pi(M)v = v$ for each highest-weight vector $v\in V_\lambda$.
\item $\lambda(\gt) = 0$ and also
\[
  \frac{\langle \lambda,\alpha \rangle}{\langle\alpha,\alpha\rangle} \in \Z^+ \text{ for all } \alpha\in \Sigma^+
\]
\end{enumerate}
\end{theorem}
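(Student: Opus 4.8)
The plan is to prove the chain of implications $(1)\Rightarrow(2)\Rightarrow(3)\Rightarrow(1)$. Fix once and for all a positive system $\Delta^+$ of roots of $(\gu_\C,\gh_\C)$ that is compatible with $\Sigma^+$, meaning that every $\beta\in\Delta^+$ restricts on $i\ga$ to a nonnegative combination of $\Sigma^+$. With such a choice $\gn_\C$ and the positive-root part $\gn_\C^{\gm}$ of $\gm_\C$ are spanned by positive root vectors, so a highest-weight vector $v_\lambda$ is annihilated by $\gn_\C^{\gm}\oplus\gn_\C$; hence $\pi(N)v_\lambda=v_\lambda$, and $v_\lambda$ is at the same time a highest-weight vector for the reductive algebra $\gm_\C$ of highest weight $\lambda|_{\gt_\C}$. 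I also record two standing facts: $K=U^\theta$ is connected, so $K$-invariance of a vector is equivalent to $\gk$-invariance; and, since $\gm_\C$ commutes with $i\ga$, each restricted-weight space $V^{\bar\mu}=\{v:Hv=\bar\mu(H)v,\ H\in i\ga\}$ is $M$-stable, and the distinct $V^{\bar\mu}$ are mutually orthogonal for the $U$-invariant inner product (being sums of weight spaces for a maximal torus of $U$).

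For $(1)\Rightarrow(2)$ I would work inside the top restricted-weight space $V^{\bar\lambda}$, where $\bar\lambda:=\lambda|_{i\ga}$. Let $e\ne0$ be $K$-fixed. Writing $g=kan$ in the Iwasawa decomposition $G=KAN$ and using $\pi(N)v_\lambda=v_\lambda$, one computes $\langle\pi(g)v_\lambda,e\rangle=e^{\lambda(\log a)}\langle v_\lambda,e\rangle$; this matrix coefficient is not identically zero because $v_\lambda$ generates $V$ under $G$, so $\langle v_\lambda,e\rangle\ne0$, and therefore the projection $e'$ of $e$ onto $V^{\bar\lambda}$ (along the other restricted-weight spaces) is a nonzero $M$-fixed vector. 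A Poincar\'e--Birkhoff--Witt argument---separating the lowering operators of $\gu_\C$ into those that strictly decrease the restricted weight and those that lie in $\gm_\C$---identifies $V^{\bar\lambda}$ with the irreducible $\gm_\C$-module generated by $v_\lambda$, which has highest weight $\lambda|_{\gt_\C}$. An irreducible $\gm_\C$-module with a nonzero $\gm$-fixed vector is necessarily trivial, so $V^{\bar\lambda}=\C v_\lambda$, $\lambda|_\gt=0$, and $v_\lambda$ (a scalar multiple of $e'$) is $M$-fixed; this establishes $(2)$, and along the way the vanishing half of $(3)$.

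For $(2)\Rightarrow(3)$, the relation $\lambda|_\gt=0$ is immediate since $\gt\subseteq\gm$ fixes $v_\lambda$. For the integrality I would invoke the structure of the possibly disconnected group $M=Z_K(\ga)$: it is generated by $M_0$ together with the elements $m_\alpha=\gamma_\alpha^2$ attached to the $\mathfrak{sl}_2$-triples of the restricted roots, and $\pi(m_\alpha)$ acts on a weight vector of weight $\mu$ by the sign $(-1)^{\langle\mu,\alpha^\vee\rangle}$. Since $v_\lambda$ is $M$-fixed this sign must be $+1$, which together with $\lambda|_\gt=0$ (so that $\langle\lambda,\cdot\rangle$ may be read off on $i\ga$) gives $\langle\lambda,\alpha\rangle/\langle\alpha,\alpha\rangle\in\Z$ for $\alpha\in\Sigma^+$, nonnegativity being automatic from the $\gu$-dominance of $\lambda$. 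The delicate point here is reconciling the conditions coming from the several roots of $\gu_\C$ lying over a given restricted root, which uses the precise structure of the restricted root system (and is what yields the stated form of the condition rather than a cruder variant).

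The implication $(3)\Rightarrow(1)$---producing an honest $K$-fixed vector out of the weight data---is the genuine obstacle, since there is no formal averaging argument (a priori $\int_K\pi(k)v_\lambda\,dk$ might vanish). I would reduce to generators: the $\lambda$ satisfying $(3)$ form a finitely generated submonoid of the weight lattice, and if $V$ and $W$ are spherical then so is the Cartan component of $V\otimes W$---the $K$-fixed vector $e_V\otimes e_W$ has nonzero image there, since by the computation in the $(1)\Rightarrow(2)$ step $\langle e_V,v_{\lambda_V}\rangle$ and $\langle e_W,v_{\lambda_W}\rangle$ are both nonzero. It then suffices to exhibit a spherical vector for each of the finitely many fundamental spherical weights, and those representations are small enough to treat case by case (via explicit models, or via Borel--Weil and the geometry of $K_\C$-orbits on the flag variety of $U_\C$). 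An alternative route, closer to Helgason's, bypasses the classification by analyzing harmonic analysis on $U/K$ directly---using the radial part of the Laplacian and the leading exponents of spherical functions to pin down the spherical spectrum as the lattice $(3)$. In every approach the heart of the matter is the same: showing that the conditions in $(3)$, evidently necessary, already force the trivial $K$-type to occur in $V|_K$.
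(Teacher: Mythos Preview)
The paper does not give a proof of this theorem; it is quoted from Helgason \cite[p.~535]{He2} and used as background for the rest of the paper. Your sketch is a correct outline of the standard proof: the Iwasawa matrix-coefficient argument for $(1)\Rightarrow(2)$ together with the identification of $V^{\bar\lambda}$ as the irreducible $\gm_\C$-module generated by $v_\lambda$ is exactly Helgason's approach; for $(2)\Rightarrow(3)$ you correctly isolate the role of the disconnected part of $M$ (the elements $m_\alpha$) in forcing the integrality, with nonnegativity coming from compatibility of $\Delta^+$ with $\Sigma^+$; and for $(3)\Rightarrow(1)$ you rightly flag this as the substantive direction and name the two routes in the literature---tensor-product reduction to the finitely many fundamental spherical weights, or the harmonic-analytic argument on $U/K$. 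There is no genuine gap beyond the technical points you already flag, most notably the case check for the fundamental spherical weights and the precise normalization of the $m_\alpha$ needed to obtain $\langle\lambda,\alpha\rangle/\langle\alpha,\alpha\rangle\in\Z$ rather than merely a half-integrality condition.
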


If $\pi$ is spherical, we say by abuse of notation that $\lambda|_{i\ga}$ is  the \textbf{highest weight} of $\pi$.  Note that there is a natural identification of purely imaginary weights on $i\ga$ with purely real weights on $\ga$.  Thus, the highest restricted roots may be identified with elements of  $\ga^*$.  We set
\[
\Lambda^+ \equiv \Lambda^+(\gg,\ga) \equiv \left\{ \mu\in\ga^* \left| \frac{\langle \lambda,\alpha \rangle}{\langle\alpha,\alpha\rangle} \in \Z^+ \text{ for all } \alpha\in \Sigma^+ \right.\right\}
\]
and write $\pi_\lambda$ for the unique spherical representation with highest weight $\lambda\in\Lambda^+$.

Moreover, $\Lambda^+$ is a semilattice. In fact, define linear functionals  $\xi_{j}\in \ga^*$ by
\begin{equation}\label{fundclass1}
\frac{ \langle \xi_{i},\alpha_{j} \rangle }
{\langle \alpha_{j},\alpha_{j} \rangle} = \delta_{i,j}  \text{ for }
1 \leq j \leq r\ \ ,
\end{equation}
where $\Psi = \{\alpha_1,\ldots,\alpha_r\} \subseteq \Sigma_0^+$ is the set of positive simple roots.
Then $\xi_1,\ldots ,\xi_r\in\Lambda^+$ and
\[\Lambda^+=\Z^+\xi_1+\cdots  + \Z^+\xi_r=\left\{\left. \sum_{j=1}^r n_j\xi_j\bmid n_j\in \Z^+\right\}\, .\]
The weights $\xi_j$ are called the
\textit{fundamental weights} for
$(\gg,\ga)$.  Note that each element of $\Lambda^+$ corresponds to a unique irreducible spherical representation of $U$.

The Cartan-Helgason Theorem also gives a classification of conical representations of $G$ by the following theorem of Helgason:
\begin{theorem}(\cite[p. 79]{He1970})
Suppose that $(\pi,V)$ is an irreducible finite-dimen\-sional representation of $G$.  Then $\pi$ is spherical if and only if it is conical, in which case $V^{MN}$ is one-dimensional and consists of the highest-weight vectors of $\pi$.
\end{theorem}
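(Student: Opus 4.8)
The plan is to deduce both implications from the Cartan--Helgason Theorem (Theorem~\ref{CartanHelgason}); the bridge will be its equivalence $(1)\Leftrightarrow(2)$, namely that $\pi$ is spherical precisely when a highest-weight vector of $\pi$ is fixed by $M$. First I would fix the $\theta$-stable Cartan subalgebra $\gh=\gt\oplus i\ga$ of $\gu$ as in Theorem~\ref{CartanHelgason}, so that $\gt=\gh\cap\gk$ is maximal abelian in $\gm$, and choose a positive system $\Delta^+$ of $\Delta(\gg_\C,\gh_\C)$ compatible with $\Sigma^+$: a root $\beta$ is placed in $\Delta^+$ whenever its restriction to $i\ga$ lies in $\Sigma^+$, and among the roots restricting to $0$ (the roots of $\gm$) one takes those positive for some auxiliary ordering. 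For this choice the nilradical $\gn^+=\sum_{\beta\in\Delta^+}(\gg_\C)_\beta$ splits as $\gn^+=\gn_\C\oplus\gn_\gm$, where $\gn_\gm:=\gn^+\cap\gm_\C$ is the positive nilpotent part of $\gm_\C$. Since a highest-weight vector $v_\lambda$ of $\pi$ is annihilated by all of $\gn^+$, it is in particular annihilated by $\gn_\C$, and hence (as $N=\exp\gn$ is connected) fixed by $N$. Here and below I write $\pi$ also for the differentiated Lie algebra action.

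The implication ``spherical $\Rightarrow$ conical'' is then immediate: if $\pi$ is spherical, Theorem~\ref{CartanHelgason} gives that its highest-weight vector $v_\lambda$ is fixed by $M$ as well, so $0\neq v_\lambda\in V^{MN}$ and $\pi$ is conical.

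For the converse, the key step is a weight-space analysis. Suppose $0\neq v\in V^{MN}$ and decompose $v=\sum_\mu v_\mu$ with respect to the $\ga$-weight decomposition $V=\bigoplus_\mu V_\mu$. Because $M=Z_K(\ga)$ preserves each $V_\mu$ and $v$ is $M$-fixed, each $v_\mu$ is $M$-fixed. For $\alpha\in\Sigma^+$ and $X\in\gg_\alpha$ one has $\pi(X)v_\mu\in V_{\mu+\alpha}$, and $\mu\mapsto\mu+\alpha$ is injective, so $\pi(X)v=0$ forces $\pi(X)v_\mu=0$ for every $\mu$; letting $\alpha$ range over $\Sigma^+$, each $v_\mu$ is annihilated by $\gn_\C$. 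Being $M$-fixed, $v_\mu$ is fixed by the identity component of $M$, hence annihilated by $\gm_\C$, in particular by $\gt_\C$ and by $\gn_\gm$. Therefore $v_\mu$ is a $\gh_\C$-weight vector (its weight vanishes on $\gt$ and is determined by $\mu$ on $i\ga$) annihilated by $\gn^+=\gn_\C\oplus\gn_\gm$, i.e., a highest-weight vector of $\pi$. As $\pi$ is irreducible, its highest-weight vectors form a single line; since the $v_\mu$ lie in distinct weight spaces, only one is nonzero. Hence $v$ is a highest-weight vector, $\dim V^{MN}=1$, and — $v$ being $M$-fixed — Theorem~\ref{CartanHelgason} now yields that $\pi$ is spherical. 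Conversely any highest-weight vector of $\pi$ is a scalar multiple of $v$ and so lies in $V^{MN}$, so $V^{MN}$ is exactly the set of highest-weight vectors of $\pi$ together with $0$.

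The point demanding the most care is the compatibility of $\Delta^+$ with $\Sigma^+$ in the first paragraph: one must verify that the positive root spaces split cleanly into those with nonzero restriction to $i\ga$ (spanning $\gn_\C$) and those restricting to $0$ (spanning $\gn_\gm\subseteq\gm_\C$), which rests on the standard structure theory relating restricted roots to the roots of $(\gg_\C,\gh_\C)$ (see \cite{He1}). A secondary point is the disconnectedness of $M$, noted in Section~\ref{definitionsSection}; but it enters only through the harmless implications $M\text{-fixed}\Rightarrow(\text{identity component})\text{-fixed}\Rightarrow\gm\text{-fixed}$, never in the reverse direction, so it causes no difficulty. The diagonalizability of the $\ga$-action underlying $V=\bigoplus_\mu V_\mu$ is likewise part of the standard theory.
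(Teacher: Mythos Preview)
The paper does not supply its own proof of this theorem; it is stated with a citation to \cite[p.~79]{He1970} and no argument follows. There is therefore nothing in the paper to compare against directly.

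Your proof is correct and is essentially the standard argument (the one underlying Helgason's original treatment): the compatible choice of $\Delta^+$ yields the splitting $\gn^+=\gn_\C\oplus\gn_\gm$, so the highest-weight vector is automatically $N$-fixed, and the equivalence $(1)\Leftrightarrow(2)$ of Theorem~\ref{CartanHelgason} then gives ``spherical $\Rightarrow$ conical'' immediately. For the converse, your $\ga$-weight decomposition of an $MN$-fixed vector, combined with the observation that $M$ preserves each $V_\mu$ and $\gg_\alpha$ shifts weights injectively, is exactly the right mechanism to isolate a highest-weight vector that is $M$-fixed, after which $(2)\Rightarrow(1)$ finishes the job. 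The two caveats you flag (the compatible positive-system choice and the disconnectedness of $M$) are handled correctly; neither causes a gap here. One cosmetic point: in your first sentence you describe $\gh$ as a Cartan subalgebra of $\gu$, matching the paper's Theorem~\ref{CartanHelgason}, but then work with $\Delta(\gg_\C,\gh_\C)$; since $\gu_\C=\gg_\C$ this is harmless, but it would read more cleanly to say ``$\theta$-stable Cartan subalgebra of $\gg_\C$'' throughout.
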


 Now that the irreducible finite-dimensional spherical and conical representations have been parameterized, one may ask more generally about finite-dimensional spherical and conical representations that may not be irreducible.
 
To that end, suppose that $(\pi_\mu, \cH_\mu)$ is an irreducible $K$-spherical representation of $G$ with highest weight $\mu$ and that $(\sigma, \cH)$  is a unitary primary representation of $G$ consisting of representations of type $\mu$.  By \cite[Lemma 1.5]{GM}, all cyclic primary representations of a compact group are finite-dimensional, and hence $\sigma$ extends uniquely to a holomorphic spherical representation of $G^\C$.  Because it is a finite-dimensional spherical representation, $\sigma$ is automatically a conical representation of $G^\C$.  In fact, as the following result shows, the $MN$-invariant vectors of $\sigma$ are precisely the highest-weight vectors of irreducible subrepresentations of $\sigma$. The lemma is likely known by specialists, but the authors were not able to find an exact citation in the literature. 

\begin{lemma}
\label{irreducible_primary}    
Suppose, as above, that $(\sigma, \cH)$  is a finite-dimensional, unitary primary representation of $U$ consisting of representations with highest weight $\mu$.   If $v\in \cH^{MN}\backslash\{0\}$, then $v$ is a highest-weight vector that generates an irreducible spherical representation of $U$.  Furthermore, if $v,w\in \cH^{MN}\backslash\{0\}$ and $v\perp w$, then $\langle\pi(U)v\rangle \perp \langle \pi(U)w\rangle$.
\end{lemma}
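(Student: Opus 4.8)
The plan is to reduce everything to the elementary structure of a primary representation of a compact group. Since $(\sigma,\cH)$ is finite-dimensional, unitary, and primary of type $\mu$, there is a finite-dimensional Hilbert space $W$ and a unitary isomorphism $\cH \cong \cH_\mu \otimes W$ carrying $\sigma$ to $\pi_\mu \otimes 1_W$, where the target carries the tensor-product inner product. The first step is to record this decomposition and to observe that, by the uniqueness of holomorphic continuation of finite-dimensional representations (the same fact already invoked in the discussion preceding the lemma), this unitary isomorphism also intertwines the holomorphic extensions of $\sigma$ and of $\pi_\mu \otimes 1_W$ to $G^\C$; in particular it is equivariant for the resulting actions of the real subgroup $MN \leq G \leq G^\C$.

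Next I would compute $\cH^{MN}$. Under the identification above, $MN$ acts on $\cH_\mu \otimes W$ through $\pi_\mu(MN) \otimes 1$, so $\cH^{MN} = (\cH_\mu)^{MN} \otimes W$. Now $\pi_\mu$ is an irreducible $K$-spherical representation of $G$, hence conical, and by Helgason's theorem $(\cH_\mu)^{MN}$ is then one-dimensional and consists of highest-weight vectors of $\pi_\mu$; fix such a vector $v_\mu$ spanning it. Therefore $\cH^{MN} = \C v_\mu \otimes W$, and any $v \in \cH^{MN}\setminus\{0\}$ can be written uniquely as $v = v_\mu \otimes w$ with $w \in W\setminus\{0\}$.

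It then remains to identify $\langle \sigma(U)v\rangle$. Since $v_\mu$ is a highest-weight vector of $\pi_\mu$ (of weight $\mu$) and $\sigma = \pi_\mu \otimes 1_W$, the vector $v = v_\mu \otimes w$ is a highest-weight vector of $\sigma$. Because $\pi_\mu$ is irreducible and $v_\mu \neq 0$, the span of $\pi_\mu(U)v_\mu$ is all of $\cH_\mu$, whence $\langle \sigma(U)v\rangle = \cH_\mu \otimes \C w$; this subrepresentation is $U$-equivariantly isomorphic to $\pi_\mu$, so it is irreducible and spherical, which is the first assertion. For the second, write the two given $MN$-invariant vectors as $v = v_\mu \otimes w_1$ and $v' = v_\mu \otimes w_2$; then $0 = \langle v, v'\rangle = \langle v_\mu, v_\mu\rangle\,\langle w_1, w_2\rangle$ forces $w_1 \perp w_2$ in $W$, and since $\langle \sigma(U)v\rangle = \cH_\mu \otimes \C w_1$ and $\langle \sigma(U)v'\rangle = \cH_\mu \otimes \C w_2$, these subspaces are orthogonal for the tensor-product inner product.

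The representation theory here is light, so the only point genuinely requiring care is the first step: making sure that the abstract unitary equivalence $\sigma \cong \pi_\mu \otimes 1_W$ transports to the holomorphic extensions, so that the space ``$(\cH_\mu)^{MN}$'' computed inside the conical $G$-representation $\pi_\mu$ is precisely the object appearing after the identification. This is exactly where uniqueness of holomorphic continuation does the work; once that is in place, the rest is bookkeeping with the tensor decomposition.
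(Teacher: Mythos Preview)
Your proof is correct and takes a genuinely different, cleaner route from the paper. The paper argues directly: given $v \in \cH^{MN}$, it decomposes the cyclic space $\langle\sigma(G)v\rangle$ into irreducibles $W_1 \oplus \cdots \oplus W_n$, projects $v$ onto each summand to obtain highest-weight vectors $v_i$, and then derives a contradiction from $n > 1$ by exploiting cyclicity together with the equivalence $W_1 \cong W_2$ (if $\sum c_i\pi(g_i)v = v_1$, then the same coefficients send $v_1 \mapsto v_1$ but $v_2 \mapsto 0$, impossible under an intertwiner matching the highest-weight vectors). For orthogonality it first shows $V \cap W = \{0\}$ and then uses the orthogonal projection $p:V+W \to W$ as an intertwiner to upgrade this to $V \perp W$. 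You instead invoke the structural isomorphism $\cH \cong \cH_\mu \otimes W$ for a primary representation at the outset, transport it through the holomorphic extension, and read off both conclusions from the tensor description $\cH^{MN} = \C v_\mu \otimes W$ and the tensor-product inner product. Your approach buys transparency and brevity; the paper's argument is more self-contained in that it avoids appealing to the isotypic tensor decomposition and works only with cyclic subspaces and intertwining operators.
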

\begin{proof}
Let $v\in \cH^{MN}\backslash\{0\}$, and consider $W = \langle \sigma(G) v \rangle$.  We can write $W = W_1 \bigoplus \cdots \bigoplus W_n$ where each $W_i$ gives an irreducible representation of $G$ that is equivalent to $\cH_\mu$.  It must be a finite direct sum because all cyclic primary representations of compact groups are finite-dimensional (see \cite{GM}).  For each $i$, let $v_i$ be the orthogonal projection of $v$ onto $W_i$.  Then $v = v_1 + \cdots + v_n$.  Since each $W_i$ is a $G$-invariant subspace, it follows that each vector $v_i$ is also invariant under $MN$.  Because $W_i$ is irreducible, we see that $v_i$ must be a (nonzero) highest-weight vector of weight $\mu$ (see \cite[Theorem 12.3.13]{GW}).  Hence $v$ is a weight vector of weight $\mu$. 

   Suppose that $W$ is not irreducible (that is, $n >1$).  Because $W$ is cyclic, there must be $g_1, \ldots, g_k \in G$ and $c_1, \ldots, c_k \in \C$ such that $\sum_{i=0}^k c_i \pi(g_i) v =  v_1$ (it is sufficient to consider finite linear combinations because $W$ is finite-dimensional).  It follows from the invariance of each space $W_k$ that $\sum_{i=0}^k c_i \pi(g_i) v_1 =  v_1$ and $\sum_{i=0}^k c_i \pi(g_i) v_2 = 0$.  Because $W_1$ and $W_2$ give equivalent representations of $G$ and all highest-weight vectors of an irreducible representation are constant multiples of each other, this is a contradiction.  Thus $W$ is irreducible and $v = v_1$ is a highest-weight vector for $W$.

  Now suppose that $v$ and $w$ are nonzero $MN$-invariant vectors in $\cH$ such that $v\perp w$.  Write $V = \langle \pi(G)v\rangle$ and $W = \langle \pi(G)w\rangle$.  By the above, we know that $V$ and $W$ are irreducible representations of $G$ with highest-weight vectors $v$ and $w$, respectively.  Hence, either $V\cap W = \{0\}$ or $V=W$.  Because the space of highest-weight vectors of an irreducible representation of $G$ is one dimensional and $v\perp w$, we cannot have $V=W$.  Thus $V\cap W=\{0\}$.
  
  Now consider the invariant subspace $Z=V+W$ and the corresponding orthogonal projection $p:Z\rightarrow W$, which is an intertwining operator for $\pi$ because $W$ is an invariant subspace of $Z$.  Hence, $p(v)\in \cH^{MN}$ and so $p(v) = cw$ for some $c\in\C$.  Since $v\perp w$, we see that $c=0$ and thus $v\in \ker p$.  Moreover, it is clear that $\ker p$ is a $U$-invariant subspace of $Z$, so it follows that $V=\langle \pi(U)v\rangle \subseteq \ker p$.  Hence $V\perp W$ as we wished to show.
\end{proof}

We end this section with another useful lemma, which we will make use of several times.  Once again, it is probably already known, but the authors were unable to find an exact citation in the literature.
\begin{lemma}
\label{splittinglemma}
%\marginpar{I haven't yet found a citation for this lemma.}
Let $G$ be a topological group and let $(\pi, \cH)$ be a unitary representation of $G$.  Let $\cA$ be a finite or countably infinite index set, and suppose that 
\[
v = \sum_{i\in\cA} v_i,
\] 
where $v_i\in\cH$ for each $i\in\cA$ and where $\langle \pi(G) v_i \rangle$ and $\langle \pi(G) v_j \rangle$ give mutually distinct irreducible representations of $G$ for $i\neq j$.  Then 
\[
\langle \pi(G) v \rangle = \bigoplus_{i\in\cA} \langle \pi(G) v_i \rangle .
\]
\end{lemma}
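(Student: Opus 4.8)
The plan is to prove the two inclusions separately, the non-trivial one being $\bigoplus_{i\in\cA}\langle \pi(G)v_i\rangle \subseteq \langle \pi(G)v\rangle$. Write $V_i=\langle\pi(G)v_i\rangle$; by hypothesis each $V_i$ is a nonzero irreducible $G$-subrepresentation and the $V_i$ are pairwise inequivalent, so in particular $v_i\neq 0$. First I would record the standard fact that pairwise inequivalent irreducible unitary subrepresentations are mutually orthogonal: the orthogonal projection $P_j\colon\cH\to V_j$ is an intertwining operator, so $P_j|_{V_i}\colon V_i\to V_j$ is an intertwiner between inequivalent irreducibles; applying Schur's lemma to the positive self-adjoint intertwiner $(P_j|_{V_i})^*(P_j|_{V_i})$ of $V_i$ forces it to vanish (otherwise a suitable rescaling of $P_j|_{V_i}$ would be an isometric $G$-isomorphism of $V_i$ onto a nonzero closed invariant subspace of the irreducible $V_j$, i.e.\ onto $V_j$, contradicting $V_i\not\cong V_j$), hence $P_j|_{V_i}=0$ and $V_i\perp V_j$. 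Consequently $M:=\overline{\sum_{i\in\cA}V_i}$ is the orthogonal Hilbert-space direct sum $\bigoplus_{i\in\cA}V_i$, a closed $G$-invariant subspace.

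For the easy inclusion: each finite partial sum of $v=\sum_{i\in\cA}v_i$ lies in $M$, and $M$ is closed, so $v\in M$; since $M$ is closed and $G$-invariant, $W:=\langle\pi(G)v\rangle\subseteq M$. Moreover, orthogonality of the $V_i$ gives $P_iv=v_i$ for every $i$.

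The main step is to show $V_i\subseteq W$ for each $i\in\cA$. Let $Q\colon\cH\to W$ be the orthogonal projection, an intertwiner since $W$ is closed and $G$-invariant. A short computation identifies the adjoint of the intertwiner $P_i|_W\colon W\to V_i$ with $Q|_{V_i}\colon V_i\to W$. The composite $T_i:=(P_i|_W)\circ(Q|_{V_i})$ is then a positive self-adjoint intertwiner of the irreducible unitary representation $V_i$, so $T_i=c\,\id_{V_i}$ for some $c\ge 0$ by Schur's lemma; here $c>0$ because $\langle v_i,v\rangle=\langle P_iv,v\rangle=\|v_i\|^2>0$ together with $v\in W$ shows $Q|_{V_i}\neq 0$. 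Hence $c^{-1/2}\,Q|_{V_i}$ is an isometric $G$-embedding of $V_i$ into $W$, so $Q(V_i)$ is a nonzero closed $G$-invariant subspace of $W$ isomorphic to $V_i$. Since $Q(V_i)\subseteq W\subseteq M=\bigoplus_j V_j$ and $Q(V_i)\cong V_i\not\cong V_j$ for $j\neq i$, the orthogonality argument above gives $P_j(Q(V_i))=0$ for all $j\neq i$, whence $Q(V_i)\subseteq V_i$; being a nonzero closed $G$-invariant subspace of the irreducible $V_i$, it equals $V_i$. Therefore $V_i=Q(V_i)\subseteq W$. As $i\in\cA$ was arbitrary, $M=\overline{\sum_i V_i}\subseteq W$, and combined with $W\subseteq M$ this yields $W=M=\bigoplus_{i\in\cA}\langle\pi(G)v_i\rangle$.

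The only real obstacle is this last step: knowing merely that $P_i|_W$ is a nonzero intertwiner does not by itself place $V_i$ inside $W$, since a priori $W$ could meet the isotypic component $V_i$ only along a "twisted diagonal". The Schur's-lemma-plus-polar-decomposition argument is exactly what rules this out, and it is here that the hypothesis that the representations $\langle\pi(G)v_i\rangle$ are pairwise inequivalent (equivalently, that each occurs with multiplicity one) is used in an essential way.
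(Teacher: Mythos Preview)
Your proof is correct, but the route differs from the paper's. The paper argues the hard inclusion by contradiction: assuming some $v_i\notin V=\langle\pi(G)v\rangle$, it sets $w=\sum_{j\neq i}v_j$ and $W=\langle\pi(G)w\rangle$, observes that $V\cap V_i=\{0\}$ by irreducibility, and from the identity $\sum_k c_k\pi(g_k)v=\sum_k c_k\pi(g_k)v_i+\sum_k c_k\pi(g_k)w$ extracts a well-defined nonzero intertwiner between $V_i$ and $W$, contradicting the disjointness of $V_i$ and $W\subseteq\bigoplus_{j\neq i}V_j$. You instead work directly with orthogonal projections: you show that $(P_i|_W)^*=Q|_{V_i}$, apply Schur to the positive operator $(P_i|_W)(Q|_{V_i})$ on $V_i$ to get that $Q|_{V_i}$ is a nonzero scalar multiple of an isometry, and then use inequivalence once more to force $Q(V_i)\subseteq V_i$, whence $Q(V_i)=V_i\subseteq W$.

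Your argument is cleaner in that it avoids the well-definedness and boundedness issues implicit in the paper's construction of $L$, and it makes transparent exactly where unitarity (via orthogonal projections) and Schur's lemma enter. The paper's argument, on the other hand, is closer in spirit to a purely algebraic disjointness argument and would adapt more readily to non-unitary settings if one could control boundedness. Both ultimately hinge on the same point you highlight at the end: the pairwise inequivalence of the $V_i$ is what prevents the cyclic space $W$ from sitting ``diagonally'' inside $\bigoplus_i V_i$.
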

\begin{proof}
Write $V = \langle \pi(G) v \rangle$.  The fact that $V_i = \langle \pi(G) v_i \rangle$ and $V_j = \langle \pi(G) v_j \rangle$ give disjoint representations of $G$ for $i\neq j$ implies that $V_i \perp V_j$.  It is obvious that 
\[
\langle \pi(G) v \rangle \subseteq \bigoplus_{i\in\cA} \langle \pi(G) v_i \rangle,
\]  
so we prove the opposite containment.  It suffices to show that $v_i\in V$ for all~$i\in\cA$.

Suppose that $v_i \notin V$ for some $i\in\cA$.  Define  
\[
w = \sum_{j\neq i} v_j \text{ and }
W = \langle \pi(G) w \rangle \subseteq \bigoplus_{j\neq i} V_j.
\]
Then $V_i \perp W$  and $v = v_i + w$.  Furthermore, $V_i$ and $W$ give disjoint representations of $G$.  

 Now let $c_1, \ldots c_k \in\C$ and $g_1,\ldots g_k\in G$.  Then 
\[
\sum_{j=1}^k c_j \pi(g_j) v = \left(\sum_{j=1}^k c_j \pi(g_j) v_i\right) + \left(\sum_{j=1}^k c_j \pi(g_j) w\right).
\]
Because $v_i\notin V$ and $V_i$ is irreducible, we see that $V\cap V_i = \emptyset$.  It follows that 
\[
\sum_{j=1}^k c_j \pi(g_j) v_i = 0 \textrm{ if and only if } \sum_{j=1}^k c_j \pi(g_j) w = 0.
\]
Hence there is a well-defined, nonzero intertwining operator $L:V_i\rightarrow W$ such that $L(v_i) = w$, which contradicts the fact that $V_i$ and $W$ give disjoint representations of $G$.
\end{proof}

\section{Direct Limits of Groups and Symmetric Spaces}
%Motivated in part by applications to physics, there has been an increasing amount of work done on infinite-dimensional Lie groups since the 1970s. These are topological groups which are locally modeled on locally convex topological vector spaces over $\R$ (in the same way that finite-dimensional Lie groups are modeled on finite-dimensional vector spaces over $\R$). The simplest infinite-dimensional Lie groups which may be considered are those which are formed by taking direct limits of finite-dimensional Lie groups.  They occupy a sort of ``middle ground'' between finite-dimensional groups and other infinite-dimensional groups with finer topologies, in that they inherit many of the properties of the former but already exhibit some of the pathologies of the latter.

We refer the reader to \cite{DPW} and  \cite{NRW3} for a good overview of the basic properties of direct-limit groups.  See  \cite{NRW1} and \cite{KHN} for some details about the construction of smooth manifold structures on direct-limit groups.  See also \cite{Sa} for an in-depth study of direct limits of abelian and nilpotent groups with applications to physics.

\subsection{Direct Systems of Riemannian Symmetric Spaces}
\label{basicLimRiemannian}
Suppose that $\{G_n / K_n\}_{n\in\N}$ is a sequence of semisimple Riemannian symmetric spaces such that $G_n$ is a closed subgroup of $G_m$ for $n\leq m$ and such that $K_m \cap G_n = K_n$ for $n\leq m$. We label the corresponding involutions by $\theta_n : G_n\rightarrow G_n$ and make the further assumption that $\theta_m|_{G_n} = \theta_n$ for all $n\leq m$. We thus obtain a direct system of homogeneous spaces $\{G_n/K_n\}_{n\in\N}$.  Now construct the direct limits $G_\infty = \varinjlim G_n$, $K_\infty = \varinjlim K_n$, and $G_\infty/K_\infty = \varinjlim G_n/K_n$.  Finally, one can show that $K_\infty = (G_\infty)^{\theta_\infty}$.

We say that $G_\infty/K_\infty$ is a \textbf{lim-Riemannian symmetric space}.  If $G_n/K_n$ is a noncompact-type Riemannian symmetric space for all $n\in\N$, then $G_\infty/K_\infty$ is said to be a \textbf{lim-noncompact Riemannian symmetric space}.  Similarly, if $G_n/K_n$ is a compact-type symmetric space for each $n\in\N$, then $G_\infty/K_\infty$ is said to be a \textbf{lim-compact Riemmanian symmetric space}.  In this paper, all lim-compact Riemannian symmetric spaces will be limits of simply-connected spaces.

We now review how the notion of c-duals may be extended to lim-Riemann\-ian symmetric spaces.  Suppose that $\{U_n/K_n\}_{n\in\N}$ is a direct system of compact-type Riemannian symmetric spaces with involutions $\theta_n:U_n\rightarrow U_n$. One may construct a complexification $(U_\infty)_\C = \varinjlim (U_n)_\C$ for the lim-compact group $U_\infty = \varinjlim U_n$.  To simplify notation we assume that  $(U_n)_\C \subseteq (U_{n+1})_\C$ and therefore $U_n\subseteq U_{n+1}$ for each $n\in\N$.  The involutions $\theta_n:U_n\rightarrow U_n$ extend to holomorphic involutions $\theta_n : (U_n)_\C \rightarrow (U_n)_\C$ such that $\theta_m|_{U_n} = \theta_n$ for $m\leq n$.

We write 
\[
   \gu_n = \gk_n \oplus \widetilde{\gp}_n
\]
for each $n\in\N$, where $\gk_n$ and $\widetilde{\gp}_n$ are the $+1$- and $-1$-eigenspaces of $\theta_n$.  It follows from the fact that $\theta_{n+1}|_{U_n} = \theta_n$ that
\[
\gk_n=\gk_{n+1}\cap \gu_n \text{ and } \widetilde{\gp}_n=\widetilde{\gp}_{n+1}\cap \gu_n
\]
and hence that 
$\gk_n\subseteq \gk_{n+1}$ and $\widetilde{\gp}_n\subseteq\widetilde{\gp}_{n+1}$. 
For each $n$, we construct the c-dual Lie algebra
\[
    \gg_n = \gk_n \oplus i \widetilde{\gp}_n \subseteq (\gu_n)_\C
\]
and note that $\gg_n\subseteq\gg_{n+1}$. Finally, we construct the analytic subgroup $G_n$ of $(U_n)_\C$ which corresponds to the Lie algebra $\gg_n$ and recall that $G_n$ is closed in $(U_n)_\C$. Thus $G_n$ is a closed subgroup of $G_{n+1}$ for each $n$.  It follows that the direct-limit group $G_\infty = \varinjlim G_n$ is a closed subgroup of $(U_n)_\C$ and possesses the direct-limit Lie algebra $\gg_\infty = \varinjlim \gg_n$.

Reviewing the construction of finite-dimensional c-dual spaces, we see that the complexified involution $\theta_n:(\gu_n)_\C \rightarrow (\gu_n)_\C$ restricts to an involution $\theta_n : \gg_n\rightarrow \gg_n$ and that $\gk_n$ and $i\widetilde{\gp}_n$ are the $+1$- and $-1$-eigenspaces of $\theta_n$ in $\gg_n$.  Furthermore, because $\gg_n$ is $\theta_n$-stable, the holomorphic involution $\theta_n:(U_n)_\C \rightarrow (U_n)_\C$ restricts to an involution $\theta_n: G_n\rightarrow G_n$ such that $(G_n)^{\theta_n} = K_n$.  Thus $\{G_n/K_n\}_{n\in\N}$ is a direct system of noncompact-type Riemannian symmetric spaces.  We say that $G_\infty/K_\infty = \varinjlim G_n/K_n$ is the \textbf{c-dual} of $U_\infty/K_\infty$.

We set $\gp_n = i\widetilde{\gp}_n$ for each $n$, so that
\begin{align*}
\gg_n & =  \gk_n\oplus\gp_n \\
\gu_n & =  \gk_n\oplus i\gp_n.
\end{align*}
Finally, we notice that
\begin{align*}
\gg_\infty & =  \gk_\infty\oplus\gp_\infty \\
\gu_\infty & =  \gk_\infty\oplus i\gp_\infty,
\end{align*}
where $\gk_\infty = \varinjlim \gk_n$ and $\gp_\infty = \varinjlim \gp_n$ are the $+1$-and $-1$-eigenspaces of $\theta_\infty$ in $\gg_\infty$.

\subsection{Propagated Direct Limits}

\label{propagationSection}
%In this section we give a short overview of injective limits and propagation
%of compact symmetric spaces, as needed for our considerations on
%conical representations. We refer
%to \cite{OW2013} and \cite{W2010} for details.
%\medskip
As before, we assume that $G_\infty/K_\infty$ is a lim-noncompact Riemannian symmetric space which is the c-dual of a direct limit $U_\infty/K_\infty$ of simply-connected compact Riemannian symmetric spaces.  We need to put some further technical conditions on $G_\infty/K_\infty$ in order to prove our results about conical representations.  The first condition is that of \textit{propagation}, which was introduced in \cite{OW2013, W2011,W2010}.

We begin this section by examining the restricted root data of $G_\infty/K_\infty$, using the notation of Section~\ref{basicLimRiemannian}.
We recursively choose maximal commutative subspaces $\ga_k\subseteq \gp_k$ such
that $\ga_{n} \subseteq \ga_k$ for $n\leq k$ and define $\ga_\infty = \varinjlim \ga_n$.  Note that $\ga_\infty^* \equiv \varprojlim \ga_n^*$, where the projective limit is given by projections  $p_n: \ga_{n+1}^*\rightarrow \ga_n^*$ defined by $p_n:\alpha\mapsto \alpha|_{\ga_n}$.
 We then obtain the restricted root system $\Sigma_n = \Sigma(\gg_n,\ga_n)$ for each $n\in\N$.  Note that 
$
\Sigma_n\subseteq \Sigma_{k}|_{\ga_n}\backslash \{0\}
$ 
whenever $n\leq k$. 

%In other words, $\{\Sigma_n\subseteq \ga^*\}_{n\in\N}$ forms a \textit{projective system} of weights, where the projection maps $p^{n+1}_n:\Sigma_{n+1}\rightarrow\Sigma_n$ are given by restriction: $p^{n+1}_n(\lambda) = \lambda|_{\ga_n}$. Due to the fact that
% \[
%   (\ga_\infty)^* \cong \varprojlim (\ga_n^*),
%\]
%we see that $\varprojlim \Sigma_n \subseteq (\ga_\infty)^*$.  We say that $\Sigma_\infty = \varprojlim \Sigma_n$ is the \textbf{restricted root system} of $(\gg_\infty,\ga_\infty)$.

 Next, we choose positive subsystems $\Sigma_n^+\subseteq \Sigma_n$ so that
$\Sigma_n^+\subseteq \Sigma_k^+|_{\ga_n}\backslash \{0\}$. 
The projective limit $\Sigma_\infty^+=\varprojlim \Sigma_n^+$ plays the role of the positive root subsystem for $(\gg_\infty,\ga_\infty)$.

%Furthermore, we can consider for each $k\in\N$ the Cartan subalgebra $\gh_k = \gt_k \oplus \gt_k$ in $\gg_k$, where $\gt_k$ is a Cartan subalgebra of $\gm_k = Z_{\gk_k}(\ga_k)$.

For each $n\in\N$, we let $(\Sigma_n)_0$ denote the set of nonmultipliable roots in $\Sigma_n$ and set $(\Sigma_n)_0^+ = (\Sigma_n)_0 \cap \Sigma_n^+$. 
Denote  the set of simple roots in $(\Sigma_n)_0^+$
by $\Psi_n =\{\alpha_1,\ldots ,\alpha_{r_n}\}$, where $r_n=\dim \ga_n$. Since we will be dealing with direct limits we may assume that $\Sigma$, and hence $\Sigma_0$, is one of the classical root systems. We number the simple roots in the following way:

\begin{equation}\label{rootorder}
\begin{aligned}
&\begin{tabular}{|c|l|c|}\hline
$\Psi=A_r$&
\setlength{\unitlength}{.5 mm}
\begin{picture}(155,18)
\put(5,2){\circle{4}}
\put(2,5){$\alpha_{r}$}
\put(6,2){\line(1,0){13}}
\put(24,2){\circle*{1}}
\put(27,2){\circle*{1}}
\put(30,2){\circle*{1}}
\put(34,2){\line(1,0){13}}
\put(48,2){\circle{4}}
\put(49,2){\line(1,0){23}}
\put(73,2){\circle{4}}
\put(74,2){\line(1,0){23}}
\put(98,2){\circle{4}}
\put(99,2){\line(1,0){13}}
\put(117,2){\circle*{1}}
\put(120,2){\circle*{1}}
\put(123,2){\circle*{1}}
\put(129,2){\line(1,0){13}}
\put(143,2){\circle{4}}
\put(140,5){$\alpha_1$}
\end{picture}
&$r\geqq 1$
\\
\hline
%\end{tabular}\\
%&\begin{tabular}{|c|l|c|}\hline
$\Psi =B_r$&
\setlength{\unitlength}{.5 mm}
\begin{picture}(155,18)
\put(5,2){\circle{4}}
\put(2,5){$\alpha_{r}$}
\put(6,2){\line(1,0){13}}
\put(24,2){\circle*{1}}
\put(27,2){\circle*{1}}
\put(30,2){\circle*{1}}
\put(34,2){\line(1,0){13}}
\put(48,2){\circle{4}}
\put(49,2){\line(1,0){23}}
\put(73,2){\circle{4}}
\put(74,2){\line(1,0){13}}
\put(93,2){\circle*{1}}
\put(96,2){\circle*{1}}
\put(99,2){\circle*{1}}
\put(104,2){\line(1,0){13}}
\put(118,2){\circle{4}}
\put(115,5){$\alpha_2$}
\put(119,2.5){\line(1,0){23}}
\put(119,1.5){\line(1,0){23}}
\put(143,2){\circle*{4}}
\put(140,5){$\alpha_1$}
\end{picture}
&$r\geqq 2$\\
\hline
%\end{tabular} \\
%&\begin{tabular}{|c|l|c|}\hline
$\Psi=C_r$ &
\setlength{\unitlength}{.5 mm}
\begin{picture}(155,18)
\put(5,2){\circle*{4}}
\put(2,5){$\alpha_{r}$}
\put(6,2){\line(1,0){13}}
\put(24,2){\circle*{1}}
\put(27,2){\circle*{1}}
\put(30,2){\circle*{1}}
\put(34,2){\line(1,0){13}}
\put(48,2){\circle*{4}}
\put(49,2){\line(1,0){23}}
\put(73,2){\circle*{4}}
\put(74,2){\line(1,0){13}}
\put(93,2){\circle*{1}}
\put(96,2){\circle*{1}}
\put(99,2){\circle*{1}}
\put(104,2){\line(1,0){13}}
\put(118,2){\circle*{4}}
\put(115,5){$\alpha_2$}
\put(119,2.5){\line(1,0){23}}
\put(119,1.5){\line(1,0){23}}
\put(143,2){\circle{4}}
\put(140,5){$\alpha_1$}
\end{picture}
& $r\geqq 3$
\\
\hline
%\\
%\end{tabular}\\
%&\begin{tabular}{|c|l|c|}\hline
$\Psi=D_r$ &
\setlength{\unitlength}{.5 mm}
\begin{picture}(155,20)
\put(5,9){\circle{4}}
\put(2,12){$\alpha_{r}$}
\put(6,9){\line(1,0){13}}
\put(24,9){\circle*{1}}
\put(27,9){\circle*{1}}
\put(30,9){\circle*{1}}
\put(34,9){\line(1,0){13}}
\put(48,9){\circle{4}}
\put(49,9){\line(1,0){23}}
\put(73,9){\circle{4}}
\put(74,9){\line(1,0){13}}
\put(93,9){\circle*{1}}
\put(96,9){\circle*{1}}
\put(99,9){\circle*{1}}
\put(104,9){\line(1,0){13}}
\put(118,9){\circle{4}}
\put(113,12){$\alpha_3$}
\put(119,8.5){\line(2,-1){13}}
\put(133,2){\circle{4}}
\put(136,0){$\alpha_1$}
\put(119,9.5){\line(2,1){13}}
\put(133,16){\circle{4}}
\put(136,14){$\alpha_2$}
\end{picture}
& $r\geqq 4$
\\
\hline
\end{tabular}
\end{aligned}
\end{equation}

We are now ready to introduce the definition of propagated direct-limits of symmetric spaces.

\begin{definition}
We say that a lim-noncompact symmetric space $G_\infty/K_\infty$ is \textbf{propagated} if
\begin{enumerate}
 \item For each simple root $\alpha\in\Psi_k$ there is a unique simple root $\widetilde{\alpha}\in\Psi_n$ such that $\widetilde{\alpha}|_{\ga_k} = \alpha$, whenever $k\leq n$.
 \item There is a choice of ordering on the roots in $\Psi_k$ for each $k\in\N$ such that either $\ga_n = \ga_k$ or else $\Psi_k$ extends $\Psi_n$ for $n\leq k$ only by adding simple roots at the left end. (In particular, each $\Psi_k$ has the same Dynkin diagram type.)
 \end{enumerate}
\end{definition}

Let $U_\infty = \varinjlim U_n$ be a direct limit of compact Lie groups.  Each group $U_n$ may be considered as a compact symmetric space $U_n \equiv U_n\times U_n / K_n$, where $K_n := \{(g,g)|g\in U_n\} \cong U_n$ is the fixed-point subgroup of the involution $\theta : (g,h)\mapsto (h,g)$ on $U_n\times U_n$.  In this way, $(U_\infty \times U_\infty) / U_\infty \equiv U_\infty$ becomes a direct limit of symmetric spaces.

\begin{comment}Choose a Cartan subalgebra $\gh_n\subseteq \ug_n$ for each $n$ in such a way that   $\gh_n \subseteq \gh_k$ whenever $n\leq k$.  \marginpar{is the definition of the $\gh_i$'s clear?} One then obtains a root system $\Delta_n = \Delta(\gg_n, \gh_n)$ for each $n$. After recursively choosing positive subsystems  $\Delta_n^+\subseteq \Delta_n$ such that
\[
\Delta_n^+\subseteq \Delta_k^+|_{\gh_n}\backslash \{0\},
\]
for $n \leq k$, we arrive at a set $\Xi_n$ of simple roots in $\Delta_n^+$.
 We order these simple roots the same way as in Table~\ref{rootorder}. 
\end{comment}

\begin{definition}
 We say that the lim-compact group $U_\infty$ is \textbf{propagated} if the associated symmetric space $(U_\infty \times U_\infty) / U_\infty$ is propagated.
% \begin{comment}
%\begin{enumerate}
% \item For each simple root $\alpha\in\Xi_k$ there is a unique simple root $\widetilde{\alpha}\in\Xi_n$ such that $\widetilde{\alpha}|_{\gh_k} = \alpha$, whenever $k\leq n$.
% \item There is a choice of ordering on the roots in $\Xi_k$ for each $k\in\N$ such that either $\gh_n = \gh_k$ or else $\Xi_k$ extends $\Xi_n$ for $n\leq k$ only by adding simple roots at the left end.  
% \end{enumerate}  
% \end{comment}
\end{definition}

%\marginpar{Is this paragraph clear enough?} We note that this definition of propagation of a lim-compact group $U_\infty$ agrees with the notion of the propagation of $U_\infty$ as a lim-compact symmetric space $(U_\infty \times U_\infty) / U_\infty$.  

Suppose that $U_\infty$ is a propagated direct limit of compact, simply-connected semisimple Lie groups.  Then each $U_k$ may be decomposed into a product of compact simple Lie groups, say $U_k = U_k^1\times U_k^2\times \cdots \times U_k^{d_k}$.  We can recursively choose Cartan subalgebras $\gh_k = \gh_k^1\oplus \gh_k^2\oplus\cdots\oplus \gh_k^{d_k}$ where each $\gh_k^i$ is a Cartan subalgebra of $\gu_k^i$. The definition of propagation then implies that $d_n = d_m \equiv d$ for each $n,m\in\N$ and that the indices may be ordered in such a way that $\{U_k^i\}_{n\in\N}$ is a propagated direct system of compact simple Lie groups for each $1\leq i\leq d$.

\subsection{Admissible Direct Limits}
We continue to examine the root data for lim-noncompact symmetric spaces $G_\infty/K_\infty$.
 For each $k\in\N$ and each restricted root $\alpha\in\Sigma_k$, we define as before the root space
\[
\gg_{k,\alpha} =\{Y\in\gg_k \mid [H,Y]=\alpha (H)Y \text{ for all }
H\in \ga_k\}\, .
\]
Next we define the subalgebras
 \[
   \gn_k = \bigoplus_{\alpha\in \Sigma_k^+} \gg_{k,\alpha}
\]
and
\[\gm_k = Z_{\gk_k}(\ga_k)\]
of $\gg_k$. 
Similarly, we define the subgroups $N_k = \exp(\gn_k)$ and $M_k = Z_{K_k}(\ga_k)$ of $G_k$.

For each $k\in K$, the conical representations of $G_k$ are the representations which possess a nonzero vector (or, more generally, distribution vector) which is invariant under the action of the group $M_k N_k$.  Hence, in order to define conical representations of $G_\infty$, one would like to define a subgroup $M_\infty N_\infty = \varinjlim M_n N_n$.  In order for such a group to be well-defined, we need to introduce a technical condition, which was first considered in \cite{HO}.

\begin{definition}
\label{admissibleDefinition}
A lim-noncompact symmetric space $G_\infty / K_\infty$ is said to be \textbf{admissible} if $M_k N_k \leq M_m N_m$ whenever $k\leq m$.  
\end{definition}

As a consequence of the following lemmas, it is sufficient to assume that $\gm_k \subseteq \gm_m$ for $k\leq m$:

\begin{lemma}
If $G_\infty / K_\infty$ is a lim-noncompact symmetric space, then $N_k\leq N_m$ for $k\leq m$.
\end{lemma}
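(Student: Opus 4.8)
The statement to prove is that for a lim-noncompact symmetric space $G_\infty/K_\infty$, one has $N_k \leq N_m$ whenever $k \leq m$. Since $N_k = \exp(\gn_k)$ and $N_m = \exp(\gn_m)$ are the analytic subgroups attached to nilpotent subalgebras in the respective ambient groups, and $G_k$ is a closed subgroup of $G_m$, it suffices to show the Lie-algebra containment $\gn_k \subseteq \gn_m$; the group-level statement then follows because $\exp$ on $\gn_k$ inside $G_k$ agrees with $\exp$ on $\gn_m$ inside $G_m$ restricted to $\gn_k$ (both are restrictions of the exponential of the ambient complex group $(U_m)_\C$), so $N_k = \exp(\gn_k) \subseteq \exp(\gn_m) = N_m$.

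So the heart of the matter is $\gn_k \subseteq \gn_m$, i.e. $\bigoplus_{\alpha \in \Sigma_k^+} \gg_{k,\alpha} \subseteq \bigoplus_{\beta \in \Sigma_m^+} \gg_{m,\beta}$. First I would fix a nonzero root $\alpha \in \Sigma_k^+$ and a nonzero vector $X \in \gg_{k,\alpha} \subseteq \gg_k \subseteq \gg_m$. The key structural facts are already laid out in Section~\ref{propagationSection}: we have $\ga_k \subseteq \ga_m$, and the positive systems were chosen compatibly so that $\Sigma_k^+ \subseteq \Sigma_m^+|_{\ga_k} \setminus \{0\}$. Since $\ga_m$ is abelian and $X$ lies in $\gg_m$, I can decompose $X$ into $\ga_m$-weight components: $X = \sum_{\beta} X_\beta$ with $X_\beta \in \gg_{m,\beta}$, the sum running over those $\beta \in \Sigma_m \cup \{0\}$ with $X_\beta \neq 0$. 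The point is to show that every such $\beta$ restricts to $\alpha$ on $\ga_k$ and hence (being nonzero on $\ga_k$) is itself nonzero, and moreover lies in $\Sigma_m^+$.

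To see that each contributing $\beta$ satisfies $\beta|_{\ga_k} = \alpha$: for $H \in \ga_k \subseteq \ga_m$ we have $[H, X] = \alpha(H) X$ on one hand (as $X \in \gg_{k,\alpha}$ and the bracket is the restriction of the bracket on $\gg_m$), and $[H, X] = \sum_\beta \beta(H) X_\beta$ on the other; comparing $\ga_m$-weight components forces $\beta(H) = \alpha(H)$ for all $H \in \ga_k$, i.e. $\beta|_{\ga_k} = \alpha$. In particular $\beta \neq 0$, so $\beta \in \Sigma_m$. Then I must place $\beta$ in $\Sigma_m^+$ rather than $-\Sigma_m^+$: this is exactly where the compatibility $\Sigma_k^+ \subseteq \Sigma_m^+|_{\ga_k}\setminus\{0\}$ is used — since $\beta|_{\ga_k} = \alpha \in \Sigma_k^+$, and the projection $p: \Sigma_m \to \ga_k^*$ maps $\Sigma_m^+$ into $\Sigma_k^+ \cup \{0\}$ (equivalently maps $-\Sigma_m^+$ into $-\Sigma_k^+ \cup \{0\}$), a root $\beta \in \Sigma_m$ with $\beta|_{\ga_k} = \alpha \in \Sigma_k^+$ cannot be negative, hence $\beta \in \Sigma_m^+$. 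Therefore each $X_\beta \in \gg_{m,\beta} \subseteq \gn_m$, and so $X = \sum_\beta X_\beta \in \gn_m$. This gives $\gg_{k,\alpha} \subseteq \gn_m$ for every $\alpha \in \Sigma_k^+$, hence $\gn_k \subseteq \gn_m$, and then $N_k \subseteq N_m$ as above.

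**Main obstacle.** The only genuinely delicate point is the sign bookkeeping: verifying carefully that the restriction map on roots sends $\Sigma_m^+$ into $\Sigma_k^+ \cup \{0\}$ so that no negative root of $\Sigma_m$ can restrict to a positive root of $\Sigma_k$. This is essentially forced by the recursive compatible choice of positive systems described just before the definition of propagation, but it should be spelled out rather than taken for granted. Everything else — that $\exp$ is compatible under the closed inclusions $G_k \hookrightarrow G_m \hookrightarrow (U_m)_\C$, and the weight-space decomposition argument — is routine.
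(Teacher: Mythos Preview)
Your proof is correct and follows essentially the same route as the paper: reduce to the Lie-algebra containment $\gn_k\subseteq\gn_m$, decompose an $\ga_k$-root vector $X\in\gg_{k,\alpha}$ into $\ga_m$-root components, observe that each component must restrict to $\alpha$ on $\ga_k$, and invoke the compatible choice of positive systems to conclude each component lies in $\gn_m$. Your write-up is in fact slightly more explicit than the paper's about the bracket computation forcing $\beta|_{\ga_k}=\alpha$ and about the positivity bookkeeping, but the argument is the same.
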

\begin{proof}
We will show that $\gn_k \subseteq \gn_m$.  The result will then follow from the fact that $N_k = \exp \gn_k$ and $N_m = \exp \gn_m$.  

In fact, it suffices to show that $\gg_{k,\alpha} \subseteq \gn_m$ for all  $\alpha\in\Sigma_k^+$.  Suppose that $X\in\gg_{k,\alpha}$.  Consider the decomposition of $X$ into $\ga_m$-root vectors:
\[
   X = \sum_{\beta\in\Sigma_m} X_\beta,
\]
where $X_\beta \in \gg_{m,\beta}$ for each $(\gg_m,\ga_m)$-root $\beta$.  
Because this decomposition is unique and $X$ is a root vector for $\ga_k\subseteq\ga_m$, it follows that $\beta|_{\ga_k} = \alpha$ for all $\beta \in\Sigma_m$ such that $X_\beta \neq 0$.

%Hence, for any $A\in \ga_k \subseteq \ga_m$, we have
%\begin{align*}
%  \sum_{\beta\in\Sigma_m} \beta(A) X_\beta  & = [A,X] \\
%                                              & = \alpha(A) X\\
%                                              & = %\sum_{\beta\in\Sigma_m}\alpha(A) X_\beta
%\end{align*}
%It follows that either $X_\beta = 0$ or $\beta|_{\ga_k} = \alpha$ for all %$\beta\in\Sigma_m$.

Now recall that we have made a consistent choice of positive root subsystems $\Sigma_k^+$ of $\Sigma_k$ and $\Sigma_m^+$ of $\Sigma_m$.  In other words, $\beta\in\Sigma_m$ is positive if $\beta|_{\ga_k}$ is positive.  Since $\alpha\in\Sigma_k^+$ , it follows that $X$ is a sum of $\Sigma_m^+$-root vectors.  Hence, $X\in \gn_m$.
\end{proof}

Due to the fact that $M_k$ is typically a disconnected subgroup of $G_n$, it is not clear 
\textit{a priori} that requiring $\gm_k\subseteq\gm_m$ for $k\leq m$ is sufficient to imply that $M_k\leq M_m$.  However, the following lemma shows that this Lie algebra condition is, in fact, sufficient:
%However, it is still possible to reduce the assumption that $M_k\leq M_m$ to a condition on the Lie algebras, which will make it much easier to prove that the classical symmetric spaces are admissible.

%Suppose that $\gm_k\subseteq\gm_m$ for $k\leq m$.  We recursively choose maximal abelian subalgebras $\gt_k\subseteq\gm_k$ for each $k$ such that $\gt_k\subseteq \gt_m$ whenever $k\leq m$.  For each $k\in\N$ we can form a $\theta_k$-stable Cartan subalgebra of $\gg_k$ by setting $\gh_k = \gt_k \oplus \ga_k$.  Then $\gh_k\subseteq\gh_m$ for $k\leq m$.  

\begin{lemma}
Suppose that $G_\infty / K_\infty$ is a propagated lim-noncompact symmetric space such that $\gm_k\subseteq\gm_m$ for all $k\leq m$.  Then $M_k\leq M_m$ for $k\leq m$.
\end{lemma}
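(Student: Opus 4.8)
The plan is to treat the identity component and the component group of $M_k$ separately, reducing the second (genuinely nontrivial) part to a standard structural description of $M=Z_K(\ga)$. The easy pieces come first. Since $M_j^0$ is the analytic subgroup of $G_m$ with Lie algebra $\gm_j$ (for $j=k$ this uses that $G_k$ is closed in $G_m$), the hypothesis $\gm_k\subseteq\gm_m$ gives at once $M_k^0\subseteq M_m^0\subseteq M_m$. Also, if $\ga_k=\ga_m$ then trivially $M_k=Z_{K_k}(\ga_k)\subseteq Z_{K_m}(\ga_k)=Z_{K_m}(\ga_m)=M_m$ since $K_k\subseteq K_m$, so I may assume $\ga_k\subsetneq\ga_m$ (the case in which propagation adjoins simple roots at the left end). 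It remains to show $x\in M_m$ for every $x\in M_k$.

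For this I would invoke the structure theory of the minimal parabolic (cf.\ \cite{He1}, \cite{Knapp}): for each index $j$ the group $M_j=Z_{K_j}(\ga_j)$ is generated by $M_j^0$ together with the finite subgroup $F_j:=M_j\cap\exp_{(U_j)_\C}(i\ga_j)$ --- finite because $i\ga_j=\widetilde\ga_j\subseteq\widetilde\gp_j$ meets $\gk_j$ in $0$, so the torus $\exp_{(U_j)_\C}(i\ga_j)$ meets the closed subgroup $K_j$ discretely. Granting this, write an arbitrary $x\in M_k$ as $x=x_0f$ with $x_0\in M_k^0$ and $f\in F_k$; then $x_0\in M_m$ by the first paragraph. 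For $f$: since $i\ga_k\subseteq i\ga_m$ and the exponential maps of the chain $(U_k)_\C\subseteq(U_m)_\C$ are compatible, $f\in\exp_{(U_k)_\C}(i\ga_k)\subseteq\exp_{(U_m)_\C}(i\ga_m)$, and every element of the latter torus centralizes $A_m=\exp\ga_m$ because $\exp(i\ga_m)$ and $\exp(\ga_m)$ both lie in the abelian group $\exp((\ga_m)_\C)$. Since also $f\in M_k\subseteq K_k\subseteq K_m$, it follows that $f\in Z_{K_m}(A_m)=Z_{K_m}(\ga_m)=M_m$, whence $x\in M_m$ and $M_k\subseteq M_m$.

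The hard part is really the structural fact used in the second paragraph, and it is precisely what the lemma is about: the Lie-algebra containment $\gm_k\subseteq\gm_m$ does not by itself detect the disconnectedness of $M$, and the argument is rescued by the fact that the ``extra'' generators of $M$ can be chosen inside the complexified split torus $\exp(i\ga)$ --- a location manifestly preserved on passing up the system, whereas an arbitrary finite subgroup of $K$ would not be. The propagation hypothesis plays only a supporting role: it makes the recursive choices of $\ga_\bullet$, $K_\bullet$ and $(U_\bullet)_\C$ coherent, and it lets us assume $\Sigma$ is one of the classical root systems $A$--$D$, for which the structural description above is transparent (and, if one preferred, could be replaced by a short case-by-case check in the standard matrix models).
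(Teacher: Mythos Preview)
Your proof is correct and follows essentially the same approach as the paper: both split $M_k$ into its identity component (handled by the Lie-algebra hypothesis) and a finite subgroup $F_k\subseteq\exp(i\ga_k)\cap K_k$ furnished by the standard structure theorem for $Z_K(\ga)$ (the paper cites \cite[Theorem 7.53]{Knapp}), then observe that $F_k\subseteq\exp(i\ga_m)$ centralizes $\ga_m$ and lies in $K_m$, hence in $M_m$. Your case split on $\ga_k=\ga_m$ versus $\ga_k\subsetneq\ga_m$ and the closing commentary paragraph are extraneous but harmless.
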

\begin{proof}
By \cite[Theorem 7.53]{Knapp} we see that for each $k\in\N$ there is a finite discrete subgroup $F_k\subseteq \exp(i \ga_k)\cap K_k$ such that $M_k = F_k (M_k)_0$, where $(M_k)_0 = \exp\gm_k$ is the connected component of the identity in $M_k$.   Because $\gm_k\subseteq \gm_m$ for all $k\leq m$, we see that $(M_k)_0 \leq (M_m)_0$.  It is thus sufficient to show that $F_k \leq M_m$ for $k\leq m$.  In fact, since $F_k\subseteq \exp(i\ga_k)\subseteq \exp(i\ga_m)$, it is clear that $F_k$ centralizes $\ga_k$.  Since $F_k\subseteq K_k \subseteq K_m$, we see that $F_k\leq M_m$, and the result follows.
\end{proof}

 It was not clear in \cite{HO} which lim-Riemannian symmetric spaces are admissible.  While we still do not know know whether it is possible to show that all propagated direct systems of Riemannian symmetric spaces are admissible in the sense of~\ref{admissibleDefinition}, we have been able to show that each of the classical direct limits (see Table \ref{classicalSymmetricPairs}) are admissible on a case-by-case basis.  The details of the proof are available in the appendix to this paper.

%%%%%%%%%%%%%%%%%%%%%%%%%%%%%%
%%%%%%%%%%%%%%%%%%%%%%%%%%%%%%
%%%%%%%%%%%%%%%%%%%%%%%%%%%%%%
%%%%%%%%%%%%%%%%%%%%%%%%%%%%%%
%%%%%%%%%%%%%%%%%%%%%%%%%%%%%%
%%%%%%%%%%%%%%%%%%%%%%%%%%%%%%
\section{Representations of Direct-Limit Groups}
In this section we review some important results about representations for direct-limit groups and lim-Riemannian symmetric spaces. See~\cite{Ol1990} for an overview of representation theory for classical direct limits of symmetric spaces.  See also~\cite{DPW} and \cite{NRW3} for many basic results on representations of direct-limit groups.
 
\subsection{Direct Limits of Representations}

Suppose that $\{G_n\}_{n\in\N}$ is an increasing sequence of Lie groups (i.e., $G_n$ is a closed subgroup of $G_m$ for $n\leq m$) and that for each $n$ we are provided with a continuous Hilbert representation $(\pi_n,\cH_n)$ such that $(\pi_n,\cH_n)$ is equivalent (by a unitary intertwining operator) to a subrepresentation of $(\pi_{n+1}|_{G_n},\cH_{n+1})$.  Then one has a direct system of representations and may form a \textbf{direct-limit representation} $(\pi_\infty,\cH_\infty)$ of $G_\infty$ on the direct-limit vector space $\cH_\infty \equiv\varinjlim \cH_n$.  
Now $\cH_\infty$ has a natural pre-Hilbert space structure, and if $\pi_\infty(g)$ is a bounded operator on $\cH_\infty$ for all $g\in G_\infty$, then $\pi_\infty$ extends to a continuous Hilbert representation of $G_\infty$ on the Hilbert-space closure $\overline{\cH_\infty} = \overline{\varinjlim \cH_n}$.

\begin{comment}
One of the key tools in representation theory is the study of intertwining operators for representations.
It is clear that an operator $T\in\cB(\cH)$ is an intertwining operator for a Hibert representation $(\pi,\cH)$ of a direct-limit group $G_\infty = \varinjlim G_n$ if and only if it is an intertwining operator for $\pi|_{G_n}$ for each $n$. If $\pi$ is a direct-limit representation, then we can say more:
\begin{lemma}(\cite{KS})
\label{intertwiningOperators}
If $(\pi,\cH) = (\varinjlim \pi_n,\overline{\varinjlim\cH_n})$ is a direct limit of Hilbert representations, then a bounded operator $T\in\cB(\cH)$ is an intertwining operator for $\pi$ if and only if $T|_{\cH_n}$ is an intertwining operator for $\pi|_{G_n}$ for each $n\in\N$.
\end{lemma}
\end{comment}

Direct-limit representations are the easiest representations to construct for $G_\infty$.  The following theorem shows that they can in fact be used to construct a large class of irreducible unitary representations:
\begin{theorem}(\cite{KS})
\label{irreducibleLimit}
Suppose that $\{G_n\}_{n\in\N}$ is a direct system of locally compact groups and that $\{(\pi_n,\cH_n)\}_{n\in\N}$ is a compatible direct system of irreducible unitary representations of $G_n$ for each $n\in\N$.  Then $(\pi,\cH) \equiv (\varinjlim \pi_n,\overline{\varinjlim \cH_n})$ is an irreducible unitary representation of $G_\infty$.
\end{theorem}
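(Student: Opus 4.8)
The plan is to establish irreducibility of $(\pi,\cH) = (\varinjlim \pi_n, \overline{\varinjlim \cH_n})$ by showing that the commutant of $\pi(G_\infty)$ consists only of scalar multiples of the identity. First I would observe that a bounded operator $T \in \cB(\cH)$ intertwines $\pi(G_\infty)$ if and only if it intertwines $\pi|_{G_n}$ for every $n$; this is immediate because $G_\infty = \bigcup_n G_n$ and $T$ is bounded (hence determined by its restriction to the dense subspace $\varinjlim \cH_n$). So the commutant of $\pi(G_\infty)$ is $\bigcap_n \pi(G_n)'$, an intersection of von Neumann algebras.

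Next I would bring in the compatibility hypothesis: for each $n$, $(\pi_n, \cH_n)$ is (unitarily equivalent to) a subrepresentation of $(\pi_{n+1}|_{G_n}, \cH_{n+1})$, so we may regard $\cH_n \subseteq \cH_{n+1} \subseteq \cdots$ as an increasing chain of closed subspaces whose union is dense in $\cH$, with $\pi_{n+1}$ restricting on $\cH_n$ to $\pi_n$. The key step is then: take $T$ in the commutant of $\pi(G_\infty)$ and a unit vector $v \in \cH_n$ for some $n$; I want to show $Tv$ is a scalar multiple of $v$, with the scalar independent of $n$ and $v$. For fixed $m \geq n$, let $P_m$ denote the orthogonal projection of $\cH$ onto $\cH_m$. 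Since $\cH_m$ is $\pi(G_m)$-invariant (it is $\pi_m$-invariant by construction and the $\pi(G_k)$ for $k \le m$ act as subrepresentations), $P_m$ commutes with $\pi(G_m)$, hence $P_m T P_m$ restricted to $\cH_m$ lies in the commutant of the \emph{irreducible} representation $(\pi_m, \cH_m)$. By Schur's lemma, $P_m T P_m|_{\cH_m} = \lambda_m \,\mathrm{id}_{\cH_m}$ for some scalar $\lambda_m \in \C$. Applying this to $v \in \cH_n \subseteq \cH_m$ gives $P_m T v = \lambda_m v$ for every $m \geq n$. Letting $m \to \infty$, the projections $P_m$ converge strongly to the identity on $\cH$ (their ranges exhaust a dense subspace and they are uniformly bounded), so $P_m T v \to T v$; hence $\lambda_m v \to T v$, which forces $(\lambda_m)$ to converge to some $\lambda$ with $T v = \lambda v$. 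Moreover $\lambda$ cannot depend on $n$ or on the choice of $v$: if $v \in \cH_n$ and $w \in \cH_{n'}$ are two unit vectors, both lie in $\cH_m$ for $m$ large, and $P_m T$ acts there as the single scalar $\lambda_m$, so the limiting scalars agree. Thus $T = \lambda\,\mathrm{id}$ on the dense subspace $\varinjlim \cH_n$, and by boundedness $T = \lambda\,\mathrm{id}$ on all of $\cH$.

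Finally, from $\pi(G_\infty)' = \C\,\mathrm{id}$ we conclude that $\pi$ is irreducible (a unitary representation is irreducible iff its commutant is trivial), completing the proof. Unitarity of $\pi$ is inherited directly: each $\pi_n$ is unitary and the inclusions $\cH_n \hookrightarrow \cH_{n+1}$ are isometric, so $\pi_\infty$ preserves the inner product on $\varinjlim \cH_n$ and extends to a unitary representation on the completion.

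I expect the main obstacle to be the bookkeeping around the compatibility maps and the strong convergence $P_m \to \mathrm{id}$: one must be careful that the identifications $\cH_n \hookrightarrow \cH_{n+1}$ are genuinely isometric embeddings intertwining $\pi_n$ with $\pi_{n+1}|_{G_n}$ (which is exactly what ``compatible direct system'' should mean, but this needs to be stated cleanly), and that the invariance of $\cH_m$ under all of $\pi(G_k)$ for $k \le m$ — not just $\pi_m(G_m)$ — is used correctly so that $P_m$ really does commute with $\pi(G_m)$. Everything else is a routine application of Schur's lemma plus a density argument. If one prefers, this is essentially the argument of \cite{KS}, and the proof can alternatively be phrased via the observation that $\pi(G_\infty)'' = \bigvee_n \pi(G_n)''$ contains all the $P_m$ and the argument above shows this von Neumann algebra acts irreducibly.
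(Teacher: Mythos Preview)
Your argument is correct and is the standard route to this result: compress the intertwiner $T$ to the invariant subspaces $\cH_m$, apply Schur's lemma to the irreducible $(\pi_m,\cH_m)$ to get $P_mTP_m|_{\cH_m}=\lambda_m\,\mathrm{id}$, and then pass to the limit using $P_m\to\mathrm{id}$ strongly. The bookkeeping you flag (isometric embeddings, invariance of $\cH_m$ under $\pi(G_k)$ for $k\le m$, hence $[P_m,\pi(G_m)]=0$ by unitarity) is handled correctly.

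As for comparison with the paper: the paper does \emph{not} supply its own proof of this statement; it simply quotes the result from \cite{KS} and moves on. Your proof is essentially the argument one finds in that reference, so there is no genuine methodological difference to discuss. If anything, your write-up is more explicit than what the paper offers.
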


We caution the reader that there are many examples of irreducible representations of direct-limit groups which are not given by direct limits of irreducible representations (see \cite[p. 971]{DPW}).

\subsection{Smoothness and Local Finiteness}
Just as for finite-dimensional Lie groups, it is natural to try to gather information about a representation of a direct-limit group by differentiating it to obtain a representation of its Lie algebra.  We begin by defining a notion of smoothness.

\begin{definition}
\label{smoothnessDefinition}
Suppose that $(\pi,\cH)$ is a continuous representation  of a direct-limit group $G_\infty = \varinjlim G_n$ on a Hilbert Space $\cH$ and that $v\in\cH$.  We say that $v$ is a \textbf{smooth vector} for $\pi$ if it is a smooth vector for the restricted representation $(\pi|_{G_n},\cH)$ of $G_n$ for each $n\in\N$. We denote by $\cH^\infty$ the space of all smooth vectors for $\pi$.

Similarly, we say that $v$ is a \textbf{locally finite vector} for $\pi$ if it is a $G_n$-finite vector for the restricted representation $(\pi|_{G_n},\cH)$ of $G_n$ for each $n\in\N$.  We denote by $\cH^\text{fin}$ the space of locally finite vectors for $\pi$.  Note that $\cH^\text{fin}\subseteq\cH^\infty$.
\end{definition}

We remark that the question of how to put a smooth structure on direct limit groups such as $G_\infty$ has been explored extensively in \cite{Glockner} and \cite{NRW1}, where it is shown that under certain technical growth conditions on the $G_n$'s, it is possible to put a smooth structure on $G_\infty$ that is consistent with Definition~\ref{smoothnessDefinition}.

It is not at all clear from the definition that a representation of $G_\infty$ is guaranteed to possess any smooth vectors or locally-finite vectors.  In fact, the existence of smooth vectors is far more subtle for representations of infinite-dimensional Lie groups than for finite-dimensional Lie groups, where every continuous representation on a Frechet space admits a dense subspace of smooth vectors.  There are examples of unitary representations of Banach-Lie groups which do not possess any $C^1$ vectors, much less any smooth vectors (see \cite{BelK}). 
For direct-limit groups, however, a beautiful theorem of Danilenko shows that unitary representations always admit smooth vectors.

\begin{theorem}(\cite{Da}; see also \cite[Theorem 11.3]{KHN})
%\marginpar{Reading Neeb it's clear that smoothness is meant in the sense of our definition}
Suppose that $(\pi,\cH)$ is a unitary representation of a countable direct limit of Lie groups.  Then $\cH^\infty$ is a dense subspace of $\cH$.  
\end{theorem}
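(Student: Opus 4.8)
The plan is to construct the required smooth vectors by an iterated G\aa rding smoothing, carried out one level at a time, and then to take a limit controlled by a diagonal argument of Mittag-Leffler type. For each $n$ write $\cH^{\infty}_{G_n}$ for the space of $G_n$-smooth vectors; it is a Fr\'echet space in the seminorms $w\mapsto\|d\pi_n(D)w\|$, $D\in\mathcal U(\gg_n)$, and $\cH^{\infty}=\bigcap_{n}\cH^{\infty}_{G_n}$. Since $\gg_n\subseteq\gg_{n+1}$ and $d\pi_{n+1}$ restricts to $d\pi_n$ on $\mathcal U(\gg_n)$, the inclusions $\cH^{\infty}_{G_{n+1}}\hookrightarrow\cH^{\infty}_{G_n}\hookrightarrow\cH$ are continuous, and each $\cH^{\infty}_{G_n}$ is complete. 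I would begin by recalling two classical facts for a single finite-dimensional Lie group $G$: (i) for $f\in C^{\infty}_c(G)$ and $v\in\cH$, the G\aa rding vector $\pi(f)v=\int_G f(g)\pi(g)v\,dg$ is $G$-smooth; (ii) if $v$ is already $G$-smooth and $(f_j)\subseteq C^{\infty}_c(G)$ is an approximate identity at $e$, then $\pi(f_j)v\to v$ in the Fr\'echet topology of $\cH^{\infty}_{G}$ (this follows, after an integration by parts, from the strong continuity of $k\mapsto\pi(k)d\pi(D)v$). Granting these and writing $G_0=\{e\}$ so that $\cH^{\infty}_{G_0}=\cH$, everything reduces to the claim: for every $n\ge0$, the subspace $\cH^{\infty}_{G_{n+1}}$ is dense in $\cH^{\infty}_{G_n}$ for the latter's Fr\'echet topology.

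To prove the claim (the case $n=0$ being fact (i)), fix $v\in\cH^{\infty}_{G_n}$ and smooth it over $G_{n+1}$ using functions supported near $e$. Choose a vector-space complement $\gq$ of $\gg_n$ in $\gg_{n+1}$; then $(k,x)\mapsto k\exp(x)$ is a diffeomorphism from a neighborhood of $(e,0)$ in $G_n\times\gq$ onto a neighborhood of $e$ in $G_{n+1}$, and I write $J(k,x)$ for the associated Jacobian, a smooth function with $J(e,0)=1$. For $f(k\exp x)=\alpha(k)\beta(x)$ with $\alpha\in C^{\infty}_c(G_n)$ and $\beta\in C^{\infty}_c(\gq)$ supported near the identity, one gets
\[
\pi(f)v=\int_{\gq}\beta(x)\,\pi(\alpha_x)\bigl(\pi(\exp x)v\bigr)\,dx,\qquad \alpha_x(k):=\alpha(k)J(k,x)\in C^{\infty}_c(G_n).
\]
The essential observation is that each $\pi(\alpha_x)(\pi(\exp x)v)$ is a G\aa rding vector \emph{for $G_n$}, hence $G_n$-smooth; therefore, for $D\in\mathcal U(\gg_n)$ the closed operator $d\pi_n(D)$ passes inside the (compactly supported, norm-convergent) integral over $\gq$, yielding $d\pi_n(D)\pi(f)v=\int_{\gq}\beta(x)\,\pi(Q_D\alpha_x)(\pi(\exp x)v)\,dx$ for a differential operator $Q_D$ on $G_n$ depending only on $D$. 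Now let $\beta\to\delta_0$ in $C^{\infty}_c(\gq)$: this step uses only the \emph{continuity} of $x\mapsto\pi(Q_D\alpha_x)(\pi(\exp x)v)$, not any transversal differentiability of $v$, and gives $d\pi_n(D)\pi(f)v\to\pi(Q_D\alpha_0)v=d\pi_n(D)\pi(\alpha_0)v$ with $\alpha_0(k)=\alpha(k)J(k,0)$. Finally let $\alpha\to\delta_e$ in $C^{\infty}_c(G_n)$, so that $\alpha_0\to\delta_e$ as well (since $J(e,0)=1$): by fact (ii), $d\pi_n(D)\pi(\alpha_0)v\to d\pi_n(D)v$. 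Interleaving the two limits and diagonalizing over a countable basis of $\mathcal U(\gg_n)$ produces a sequence $f_j\to\delta_e$ in $C^{\infty}_c(G_{n+1})$ with $\pi(f_j)v\to v$ in $\cH^{\infty}_{G_n}$; since each $\pi(f_j)v$ lies in $\cH^{\infty}_{G_{n+1}}$ by fact (i), the claim follows.

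With the claim in hand the theorem follows by a diagonal argument: given $v\in\cH$ and $\varepsilon>0$, put $v_0=v$ and, using the claim together with the continuity of the inclusions $\cH^{\infty}_{G_{k-1}}\hookrightarrow\cH^{\infty}_{G_n}$ for $n\le k-1$, recursively choose $v_k\in\cH^{\infty}_{G_k}$ lying within $\varepsilon 2^{-k}$ of $v_{k-1}$ in the first $k$ members of a fixed countable defining family of seminorms for each of $\cH^{\infty}_{G_0},\dots,\cH^{\infty}_{G_{k-1}}$. Then $(v_k)_k$ is Cauchy in every $\cH^{\infty}_{G_n}$, so it converges there by completeness; the limits agree across $n$ with a single vector $w=\lim_k v_k\in\bigcap_n\cH^{\infty}_{G_n}=\cH^{\infty}$, and $\|w-v\|\le\sum_{k\ge1}\varepsilon 2^{-k}=\varepsilon$. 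I expect the density claim of the second paragraph to be the main obstacle: the obvious iteration $v_k=\pi(f_k)v_{k-1}$ only yields a norm-Cauchy sequence whose limit need not be smooth for any $G_n$, and one cannot repair this by integrating by parts in the transversal directions, since $\pi(h)v_{k-1}$ fails to be $G_n$-smooth once $h$ leaves $G_n$. The device that resolves it is the product decomposition $G_{n+1}\cong G_n\times\exp(\gq)$, which confines all genuine differentiation to $G_n$ and disposes of the transversal directions using continuity alone.
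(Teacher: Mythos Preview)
The paper does not supply its own proof of this theorem: it is quoted as a result of Danilenko, with a pointer to Neeb's exposition, and is used in the paper only as background motivation. So there is no ``paper's proof'' to compare against.

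That said, your argument is along the correct lines and is essentially the strategy used in the cited sources: iterated G\aa rding smoothing together with a Mittag--Leffler/diagonal extraction. The crucial technical point---that $\cH^{\infty}_{G_{n+1}}$ is dense in $\cH^{\infty}_{G_n}$ for the Fr\'echet topology of the latter---is handled by your local product decomposition $G_{n+1}\cong G_n\times\exp(\gq)$, which lets you push $d\pi_n(D)$ under the integral using only the $G_n$-smoothness of the G\aa rding vectors $\pi(\alpha_x)(\pi(\exp x)v)$ and the closedness of $d\pi_n(D)$; the transversal variable $x$ is then removed by continuity alone. One small point worth making explicit: the local product decomposition requires $G_n$ to be a closed (embedded) Lie subgroup of $G_{n+1}$, which is exactly the standing hypothesis in the paper's direct-limit setup. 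The final diagonal argument is standard once you have countably many defining seminorms at each level, which follows from $\dim\gg_n<\infty$ and PBW. In short, your proposal is sound and matches the approach in the literature; the paper itself simply invokes the result.
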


Some representations may consist entirely of smooth vectors:
\begin{definition}
Suppose that $G_\infty$ is a direct-limit Lie group. We say that a continuous representation $(\pi,\cH)$ of $G_\infty$ on a Hilbert space $\cH$ is \textbf{smooth} if $\cH^\infty = \cH$.  Similarly, we say that $\pi$ is \textbf{locally finite} if $\cH^\text{fin} = \cH$.

If $G_\infty$ is a direct limit of complex Lie groups, then a continuous Hilbert representation $(\pi,\cH)$ of $G_\infty$ is \textbf{holmorphic} if $\pi|_{G_n}$ is holomorphic for each $n\in\N$.
\end{definition}
\noindent In fact, we will be primarily concerned with smooth representations in this paper.  They play a role for direct-limit groups that is similar to the role played by finite-dimensional representations for finite-dimensional Lie groups.  There are several conditions which are equivalent to smoothness, as we will soon see.  First we need to reference a well-known lemma on smooth one-parameter semigroups.

\begin{theorem}(\cite[Theorem 13.36]{Rudin}; \cite{Riesz-Nagy})
\label{theoremInRudin}
Let $X$ be a Banach space and let $Q:[0,\infty)\rightarrow \cB(X)$ be a strongly-continuous one-parameter semigroup with the (possibly unbounded) operator $A$ as its infinitesmial generator.  Then the following are equivalent 
\begin{enumerate}
\item The domain of $A$ is all of $X$.
\item $\displaystyle \lim_{\epsilon\rightarrow 0} ||Q(\epsilon)-1|| = 0$
\item $A\in \cB(X)$ and $Q(t)=e^{tA}$ for all $t\geq 0$.
\end{enumerate}
\end{theorem}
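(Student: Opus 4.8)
The plan is to establish the cycle of implications $(3)\Rightarrow(2)\Rightarrow(1)\Rightarrow(3)$, from which the remaining equivalences follow formally. The implication $(3)\Rightarrow(2)$ is a one-line power-series estimate: if $A\in\cB(X)$ and $Q(t)=e^{tA}=\sum_{n\geq 0}t^nA^n/n!$, where the series converges absolutely in operator norm, then $\|Q(\epsilon)-1\|\leq\sum_{n\geq 1}\epsilon^n\|A\|^n/n!=e^{\epsilon\|A\|}-1\to 0$ as $\epsilon\to 0^+$.

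The core of the argument is $(2)\Rightarrow(1)$, and it will simultaneously deliver the bounded part of $(3)$. Assuming (2), first observe that $s\mapsto Q(s)$ is \emph{norm}-continuous on $[0,\infty)$: this follows from (2), the identity $Q(s+h)-Q(s)=(Q(h)-1)Q(s)$, and the local operator-norm boundedness of $Q$ (valid for any strongly continuous semigroup). Consequently the averaged operators $B_\epsilon:=\frac{1}{\epsilon}\int_0^\epsilon Q(s)\,ds\in\cB(X)$ are well-defined operator-norm Riemann integrals, and $\|B_\epsilon-1\|\leq\sup_{0\leq s\leq\epsilon}\|Q(s)-1\|\to 0$, so $B_\epsilon$ is invertible for all sufficiently small $\epsilon>0$ by the Neumann series. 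Fixing such an $\epsilon$ and $h\in(0,\epsilon)$, the semigroup law and a change of variables give
\[
(Q(h)-1)B_\epsilon=\frac{1}{\epsilon}\left(\int_\epsilon^{\epsilon+h}Q(s)\,ds-\int_0^h Q(s)\,ds\right).
\]
Dividing by $h$ and letting $h\to 0^+$, using norm-continuity of $Q$ to evaluate the averages, yields $\frac{1}{h}(Q(h)-1)B_\epsilon\to\frac{1}{\epsilon}(Q(\epsilon)-1)$ in operator norm, whence $\frac{1}{h}(Q(h)-1)\to C:=\frac{1}{\epsilon}(Q(\epsilon)-1)B_\epsilon^{-1}$ in $\cB(X)$. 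Norm convergence implies strong convergence, so $C$ agrees with the infinitesimal generator $A$ everywhere; that is, $D(A)=X$ and $A=C\in\cB(X)$, which gives (1).

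For $(1)\Rightarrow(3)$, I would use the standard fact that the generator $A$ of a strongly continuous semigroup is a closed operator: if $x_n\to x$ and $Ax_n\to y$ with $x_n\in D(A)$, then integrating the identity $Q(t)x_n-x_n=\int_0^t Q(s)Ax_n\,ds$, passing to the limit (using local boundedness of $\|Q(s)\|$ on $[0,t]$), and dividing by $t$ shows $x\in D(A)$ and $Ax=y$. A closed operator with domain all of $X$ is bounded by the closed graph theorem, so $A\in\cB(X)$. It then remains to check $Q(t)=e^{tA}$. For any $x\in X=D(A)$ the orbit $f(s):=Q(s)x$ is differentiable with $f'(s)=AQ(s)x=Q(s)Ax=Af(s)$ and $f(0)=x$; since $A$ is bounded, the initial value problem $f'=Af$, $f(0)=x$ has the unique solution $f(s)=e^{sA}x$ (Picard iteration in $X$). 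As $x$ was arbitrary, $Q(t)=e^{tA}$, closing the cycle.

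The step I expect to be the main obstacle is $(2)\Rightarrow(1)$. It requires a careful treatment of the operator-valued integral $B_\epsilon$ — in particular the upgrade from strong continuity to \emph{norm}-continuity of $s\mapsto Q(s)$, which genuinely uses hypothesis (2) — the Neumann-series invertibility of $B_\epsilon$ for small $\epsilon$, and the change-of-variables manipulation rewriting the difference quotient of $Q$ as a difference of averages. The other steps are either the elementary power-series bound or standard functional-analytic facts (closedness of a semigroup generator, the closed graph theorem, uniqueness for a linear ODE with bounded coefficient in a Banach space).
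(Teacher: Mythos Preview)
Your argument is correct and is essentially the classical proof: the paper does not supply its own proof of this result but simply cites \cite[Theorem 13.36]{Rudin} and \cite{Riesz-Nagy}, and your $(2)\Rightarrow(1)$ step via the invertibility of the averaged operator $B_\epsilon=\frac{1}{\epsilon}\int_0^\epsilon Q(s)\,ds$ together with the closed-graph argument for $(1)\Rightarrow(3)$ is precisely Rudin's approach. There is nothing to add.
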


This result about one-parameter semigroups straightforwardly generalizes to representations of Lie groups:
\begin{theorem}
\label{boundedDifferential}
Let  $(\pi,\cH)$ be a continuous Hilbert representation of a connected Lie group $G$.  Then the following are equivalent:
\begin{enumerate}
\item $\pi$ is analytic.
\item There is a Lie algebra representation $\mathrm{d}\pi: \gg\rightarrow \cB(\cH)$ (for which $\gg$ acts by bounded operators) such that 
\[
   \pi(\exp X) = \exp(\mathrm{d}\pi(X))
\]
for each $X\in\gg$.
\item $\pi$ is smooth.
\item $\pi$ is norm continuous.
\end{enumerate}
\end{theorem}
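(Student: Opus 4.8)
The plan is to prove the chain of equivalences by establishing a cycle of implications: (1)$\Rightarrow$(2)$\Rightarrow$(3)$\Rightarrow$(4)$\Rightarrow$(1), where the middle steps are nearly trivial and the analytic content is concentrated in a single place. First I would dispose of the easy implications. The implication (3)$\Rightarrow$(4) follows from Theorem~\ref{theoremInRudin}: if $\pi$ is smooth, then for each $X\in\gg$ the one-parameter group $t\mapsto\pi(\exp tX)$ has domain of its generator equal to all of $\cH$, so by Theorem~\ref{theoremInRudin} the generator $\mathrm{d}\pi(X)$ is bounded and $\lim_{\epsilon\to 0}\|\pi(\exp\epsilon X)-1\|=0$; combining this over a basis $X_1,\dots,X_m$ of $\gg$ and using continuity of multiplication in $G$ together with the exponential coordinate chart near $e$ gives norm continuity of $\pi$ at $e$, hence everywhere by the group law and uniform boundedness of $\pi$ on compact sets. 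The implication (4)$\Rightarrow$(2) is again Theorem~\ref{theoremInRudin} applied coordinatewise: norm continuity forces each generator $\mathrm{d}\pi(X)$ to be bounded with $\pi(\exp X)=\exp(\mathrm{d}\pi(X))$, and one checks that $X\mapsto\mathrm{d}\pi(X)$ is $\R$-linear and a Lie algebra homomorphism, either by differentiating the known finite-dimensional-style identities or by using the Trotter product formula, all of which is legitimate since everything in sight is norm-continuous. The implication (2)$\Rightarrow$(1) is the observation that if $\mathrm{d}\pi:\gg\to\cB(\cH)$ is a bounded-operator representation with $\pi(\exp X)=\exp(\mathrm{d}\pi X)$, then in exponential coordinates $\pi$ is given locally by the norm-convergent power series $\exp(\mathrm{d}\pi(\cdot))$, which is a real-analytic map into $\cB(\cH)$; propagating along the group by the Baker--Campbell--Hausdorff formula (valid in $\cB(\cH)$ since $\mathrm{d}\pi(\gg)$ is a finite-dimensional Lie subalgebra of the Banach--Lie algebra $\cB(\cH)$) shows $\pi$ is analytic on all of $G$. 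Finally (1)$\Rightarrow$(3) is immediate since analytic vectors are in particular smooth and here every vector is analytic.

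The main obstacle — the only genuinely nontrivial step — is showing that norm continuity of $\pi$ (or equivalently boundedness of the generators for a spanning set of one-parameter subgroups) already forces the \emph{full} differential $\mathrm{d}\pi$ to exist as a bounded Lie algebra representation satisfying the exponential identity globally. Theorem~\ref{theoremInRudin} handles each one-parameter subgroup separately, but one must glue these together: the point is that once each $\mathrm{d}\pi(X_i)$ is bounded, the map $Y\mapsto\prod_i \exp(y_i\,\mathrm{d}\pi(X_i))$ is norm-analytic in $Y=(y_i)$, it agrees with $\pi$ in a neighborhood of $e$ by the Trotter/Lie product formula (using that $\pi$ is already norm-continuous so the product formula converges in operator norm, not merely strongly), and hence $\pi$ itself is norm-analytic near $e$; differentiating recovers a well-defined bounded $\mathrm{d}\pi$ on all of $\gg$. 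I would present this as a short lemma: a continuous Hilbert representation of a connected Lie group that is norm-continuous at the identity is real-analytic with bounded differential. With that lemma in hand, the cycle (1)$\Rightarrow$(3)$\Rightarrow$(4)$\Rightarrow$(2)$\Rightarrow$(1) closes, and since (2) trivially gives (1) and (1) trivially gives (3), all four statements are equivalent.

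I should remark that the connectedness hypothesis on $G$ is used in two places: to ensure that analyticity/smoothness/norm continuity at $e$ propagates to all of $G$, and to ensure that the representation is determined by its differential via $\exp$. For the application in this paper $G$ will be one of the finite-dimensional groups $G_n$ (or $U_n$, $(U_n)_\C$) in a direct system, which are connected by hypothesis, so there is no loss. I would end the proof by noting that condition (4), norm continuity, is the most convenient one to verify in practice and is the characterization that will be invoked repeatedly in Section~5 when analyzing which direct-limit representations restrict to smooth representations on each $G_n$.
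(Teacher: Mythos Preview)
Your proposal is correct and follows exactly the approach the paper indicates---namely, reducing everything to Theorem~\ref{theoremInRudin} applied to the one-parameter subgroups $t\mapsto\pi(\exp tX)$---only with the details actually written out (the paper's proof is the single sentence ``The result follows from judicious application of Theorem~\ref{theoremInRudin}.\ The details are left to the reader.''). The one cosmetic slip is that your opening sentence announces the cycle $(1)\Rightarrow(2)\Rightarrow(3)\Rightarrow(4)\Rightarrow(1)$ while the implications you then prove close the cycle in the order $(1)\Rightarrow(3)\Rightarrow(4)\Rightarrow(2)\Rightarrow(1)$; this is harmless but worth correcting.
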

\begin{proof}
The result follows from judicious application of Theorem~\ref{theoremInRudin}. The details are left to the reader.
\end{proof}

It is certainly possible to construct continuous unitary representations of direct-limit groups which possess no locally finite vectors.  This behavior is already present for finite-dimensional Lie groups, however:  an irreducible infinite-dimensional representation of a noncompact Lie group $G$ does not possess any $G$-finite vectors.  More surprisingly, it is possible to construct an irreducible unitary representation of a lim-compact group which has no locally finite vectors (\cite{KHNPrivate}). %\marginpar{CITE K-H. Neeb private correspondence?}  
However, Corollary~\ref{smoothClassification} will show that smooth representations of connected lim-compact groups always consist entirely of locally finite vectors.

It is well known that every continuous, finite-dimensional representation of a Lie group is smooth. However, it is also possible to construct infinite-dimensional Hilbert representations which are smooth.  Suppose that $U$ is a compact Lie group and that $(\pi,V)$ is a finite-dimensional representation of $U$.  Without loss of generality, we may assume that $\pi$ is unitary.  Now consider the representation 
\[
\left(\infty\cdot\pi,\infty\cdot V\right) \equiv \left(\bigoplus_{n\in\N}\pi,\bigoplus_{n\in\N}V\right)
\]
constructed by taking a Hilbert space direct sum of countably many copies of $(\pi,V)$.  For each $v\in\infty\cdot V$, we consider the closed invariant subspace \[W=\overline{\langle(\infty\cdot \pi)(U)v\rangle}\] generated by $v$.  Then $W$ gives a cyclic primary representation of $U$ and decomposes into a direct sum of representations equivalent to  $(\pi,V)$. From \cite{GM} we see that every cyclic primary representation of the compact group $U$ is finite-dimensional.  Thus $\dim W<\infty$ and so $v$ is a $U$-finite vector.  

In fact, the next theorem shows that in a certain sense, primary representations (or more precisely, finite direct sums of them) provide the only way to obtain infinite-dimensional smooth representations of $U$:

\begin{theorem}
\label{smoothClassification}
Let $(\pi,\cH)$ be a unitary representation of a compact Lie  group $U$. Then the following are equivalent.
\begin{enumerate}
\item $\pi$ is smooth.
\item $\pi$ decomposes into a finite direct sum of primary representations of $U$.  
\item $\pi$ is locally finite.
\end{enumerate}
\end{theorem}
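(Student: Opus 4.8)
The plan is to run the cycle of implications $(2)\Rightarrow(3)\Rightarrow(1)\Rightarrow(2)$, using as common bookkeeping the Peter--Weyl isotypic decomposition $\cH=\widehat{\bigoplus}_{\rho\in\widehat U}\cH_\rho$, in which the primary components of $\pi$ are precisely the nonzero $\cH_\rho$; I write $S=\{\rho\in\widehat U:\cH_\rho\neq\{0\}\}$, so that condition~$(2)$ says exactly that $S$ is finite. For $(2)\Rightarrow(3)$: given $v=v_1+\cdots+v_m$ with $v_j\in\cH_{\rho_j}$, each $v_j$ generates a cyclic primary representation of $U$, which is finite-dimensional by \cite{GM}; hence $\langle\pi(U)v\rangle\subseteq\sum_{j=1}^{m}\langle\pi(U)v_j\rangle$ is finite-dimensional and $v$ is $U$-finite, so $\cH^{\mathrm{fin}}=\cH$. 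The implication $(3)\Rightarrow(1)$ is immediate, since a $U$-finite vector lies in a finite-dimensional invariant subspace on which $\pi$ restricts to a finite-dimensional — hence smooth — representation, so $\cH^{\mathrm{fin}}\subseteq\cH^{\infty}$ and $\cH^{\mathrm{fin}}=\cH$ forces $\cH^{\infty}=\cH$.

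The real content is $(1)\Rightarrow(2)$. Let $U_{0}$ be the identity component of $U$. Since $U_{0}$ is open in $U$ and smoothness of a vector is a local condition, $\pi|_{U_{0}}$ is smooth, so by Theorem~\ref{boundedDifferential} the differential $\mathrm{d}\pi\colon\gu_{0}\to\cB(\cH)$ takes values in bounded operators. Fix an $\mathrm{Ad}(U)$-invariant inner product on $\gu_{0}$ and let $\Omega\in\mathcal{U}(\gu_{0})$ be the corresponding Casimir element; then $\mathrm{d}\pi(\Omega)$ is a bounded (nonnegative, self-adjoint) operator, say of norm $C$. On the $\sigma$-isotypic part of $\cH_{\rho}$ for $U_{0}$, the operator $\mathrm{d}\pi(\Omega)$ acts by the scalar $c_{\sigma}\ge 0$ (the Casimir eigenvalue of $\sigma\in\widehat{U_{0}}$), so for every $\rho\in S$ every irreducible $U_{0}$-constituent $\sigma$ of $\rho|_{U_{0}}$ satisfies $c_{\sigma}\le C$. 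I then invoke two standard facts: (A) the Casimir eigenvalue $c_{\sigma}$ is a proper function on $\widehat{U_{0}}$ — only finitely many $\sigma$ satisfy $c_{\sigma}\le C$ — which follows from the standard highest-weight formula for $c_{\sigma}$ together with the discreteness of the weight lattice; and (B) since $[U:U_{0}]<\infty$, Frobenius reciprocity shows that each $\sigma\in\widehat{U_{0}}$ occurs in $\rho|_{U_{0}}$ for only finitely many $\rho\in\widehat U$, because $\mathrm{Ind}_{U_{0}}^{U}\sigma$ is finite-dimensional. Combining (A) and (B) bounds the number of $\rho\in S$, so $S$ is finite and $(2)$ holds.

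I expect the disconnectedness bookkeeping in this last step to be the only real obstacle: the hypothesis $(1)$ delivers boundedness of $\mathrm{d}\pi$ on $\gu_{0}$, hence a finiteness statement about $\widehat{U_{0}}$, and one must transfer this to $\widehat U$ via (A)--(B); if $U$ is connected this is transparent. As a consistency check (not needed for the logical cycle, but worth recording), if $S$ were infinite one could pick distinct $\rho_{n}\in S$ and unit vectors $v_{n}\in\cH_{\rho_{n}}$ and set $v=\sum_{n}2^{-n}v_{n}$; the closed invariant subspace generated by $v$ is stable under each isotypic projection $P_{\rho_{n}}$ and therefore contains $P_{\rho_{n}}v=2^{-n}v_{n}$, hence the whole orthonormal family $\{v_{n}\}$, so it is infinite-dimensional and $v$ is not $U$-finite, confirming that $(2)$ genuinely fails when $S$ is infinite. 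One could alternatively replace the Casimir argument in $(1)\Rightarrow(2)$ by the observation that a norm-continuous unitary representation of a compact group generates a unital $C^{*}$-algebra, which as a quotient of the $c_{0}$-direct sum $\bigoplus_{\rho\in\widehat U}^{c_{0}}M_{d_{\rho}}(\C)$ can be unital only if finitely many $\rho$ occur; I would keep the Casimir version as the more elementary one.
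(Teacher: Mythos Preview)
Your argument is correct. The implications $(2)\Rightarrow(3)$ and $(3)\Rightarrow(1)$ are handled exactly as in the paper (via \cite{GM} and the trivial inclusion $\cH^{\mathrm{fin}}\subseteq\cH^{\infty}$). The substantive step $(1)\Rightarrow(2)$ is where your approach diverges.

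The paper's proof of $(1)\Rightarrow(2)$ also rests on the fact that smoothness forces $\mathrm{d}\pi$ to be bounded (Theorem~\ref{boundedDifferential}), but extracts the finiteness differently: it evaluates the highest weights $\lambda$ appearing in $\pi$ on elements $X$ of a Cartan subalgebra $\gh$, observes that each such $\lambda(X)$ is an eigenvalue of $\mathrm{d}\pi(X)$ and hence bounded by $\|\mathrm{d}\pi(X)\|$, and then invokes the lattice structure of $\Lambda^{+}(\gu,\gh)$ to conclude that only finitely many $\lambda$ survive. Your route through the Casimir eigenvalue $c_{\sigma}$ is a cleaner packaging of the same underlying mechanism: properness of $\sigma\mapsto c_{\sigma}$ replaces the coordinate-by-coordinate lattice argument, and you need only one operator bound rather than one per Cartan direction. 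What your version buys in addition is an explicit treatment of the disconnected case via steps (A)--(B) and Frobenius reciprocity for $U_{0}\subseteq U$; the paper's proof appeals to the highest-weight parametrization of $\widehat{U}$ and thus tacitly works with connected $U$, so your argument is in that respect more complete. The $C^{*}$-algebra alternative you sketch at the end is also valid and arguably the slickest of the three.
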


\begin{proof}
Let $(\pi,\cH)$ be a unitary representation of $U$.  Then we can write 
\[  
    \cH\cong_G \bigoplus_{\delta\in\widehat{G}} \cH_\delta,
\]
where $\cH_\delta$ is the space of $\delta$-isotypic vectors for each $\delta\in\widehat{G}$ (that is, vectors in $\cH_\delta$ generate primary representations that are direct sums of copies of $\delta$). Thus $\pi$ is a finite direct sum of primary representations if and only if $\cH_\delta = \{0\}$ for all but finitely many $\delta\in\widehat{G}$.

 We begin by showing that  (1)$\implies$(2).  That is, suppose that $\pi$ is smooth.  Recall from the highest-weight theorem that irreducible representations of compact connected Lie groups are parametrized by a discrete semilattice  $\Lambda^+(\gg,\gh) \subseteq i\gh^*$ of dominant integral weights, where $\gh$ denotes a Cartan subalgebra of $\gg$.  Let $\cS$ denote the set of all weights $\lambda\in \Lambda^+(\gg,\gh)$ such that $\lambda$ appears as the highest weight of a subrepresentation of $\pi$.  

If $\lambda\in\cS$, then $\lambda(X)$ is an eigenvalue of $d\pi(X)$ for each $X\in\gh$. But since $\pi$ is smooth, Theorem~\ref{boundedDifferential} implies that $||d\pi(X)||<\infty$ for each $X\in\gh$.  Thus, since $\Lambda^+(\gg,\gh)$ is a semilattice, it follows that $\{\lambda(X):\lambda\in\cS\}$ is finite for each $X$.  Hence $\cS$ is finite because $\gh$ is finite-dimensional; that is, $\pi$ decomposes as a direct sum of a finite number of primary representations.

Next we show that (2)$\implies$(3).  Suppose that 
\[  
    \cH\cong_U \bigoplus_{i=1}^n \cH_{\delta_i},
\]
where $\delta_i\in\widehat{U}$ for each $i$.  We will show that $\pi$ is smooth.  For each $v\in\cH$, we can write $v = v_1+\cdots+v_n$, where $v_i\in\cH_{\delta_i}$.  Then
\[
\langle \pi(U)v \rangle \subseteq \bigoplus_{i=1}^n \langle \pi(U)v_i \rangle.
\]
However, because each space $\langle \pi(U)v_i \rangle$ gives a cyclic primary representation of $U$, we see that it is finite-dimensional (see \cite{GM}).  Thus $v$ is $U$-finite.  Because $v\in\cH$ was arbitrary, it follows that $\pi$ is locally finite.

Finally, it is clear that (3)$\implies$(1); that is, if $\cH^\text{fin} = \cH$ then obviously $\cH^\infty = \cH$. 
\end{proof}

\begin{corollary}
Suppose that $U$ is a compact Lie group and that $(\pi,\cH)$ is a holomorphic representation of $U_\C$.  Then $\pi$ is locally-finite.
\end{corollary}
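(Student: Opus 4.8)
The plan is to reduce the holomorphic case to the compact case already handled in Theorem~\ref{smoothClassification}. The key observation is that a holomorphic representation of $U_\C$ restricts to a representation of the compact group $U$, and the restriction map on representations (on the level of finite-dimensional pieces) is essentially a bijection between holomorphic representations of $U_\C$ and representations of $U$, since $U_\C$ is the complexification of the compact group $U$.

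First I would show that the holomorphic representation $\pi$ of $U_\C$, when restricted to $U$, is smooth as a representation of $U$. Indeed, by Theorem~\ref{boundedDifferential}, a holomorphic (hence analytic) representation of the complex Lie group $U_\C$ has a Lie algebra representation $\mathrm{d}\pi:\gu_\C\to\cB(\cH)$ acting by bounded operators; restricting to $\gu\subseteq\gu_\C$ gives a Lie algebra representation of $\gu$ by bounded operators, so again by Theorem~\ref{boundedDifferential} the restriction $\pi|_U$ is smooth (equivalently analytic, equivalently norm continuous). Then Theorem~\ref{smoothClassification} applies to $\pi|_U$: it decomposes into a finite direct sum of primary representations of $U$, and in particular $\pi|_U$ is locally finite, i.e. every vector is $U$-finite.

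Next I would upgrade "$U$-finite" to "$U_\C$-finite." If $v\in\cH$ is $U$-finite, then $\langle\pi(U)v\rangle$ is a finite-dimensional $U$-invariant subspace; I need to check that its closure (or itself, since it is already finite-dimensional and hence closed) is also $U_\C$-invariant. This follows because the $U_\C$-action is holomorphic: the finite-dimensional subspace $W=\langle\pi(U)v\rangle$ carries a representation of $U$ that, being finite-dimensional, extends holomorphically to $U_\C$ (every finite-dimensional representation of $U$ extends to $U_\C$, as noted in Section~2), and this holomorphic extension must agree with $\pi|_{U_\C}$ on $W$ by uniqueness of holomorphic continuation applied to matrix coefficients — both are holomorphic maps $U_\C\to\GL(W)$ agreeing on the totally real form $U$. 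Hence $W$ is $U_\C$-invariant and finite-dimensional, so $v$ is $U_\C$-finite, and in particular smooth. Since $v$ was arbitrary, $\pi$ is locally finite.

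The main obstacle is the second step: making precise that the restriction $\pi|_U$ inherits smoothness and that finite-dimensionality transfers back up to $U_\C$-invariance. The restriction-to-$\gu$ argument via Theorem~\ref{boundedDifferential} is clean provided one knows $\pi|_{G_n}$ — here there is no direct limit, $U_\C$ is an honest finite-dimensional complex Lie group — is genuinely analytic; this is built into the definition of "holomorphic." The transfer of invariance uses that $U$ is a real form of $U_\C$ whose Lie algebra spans $\gu_\C$ over $\C$, so $U$-invariance of a finite-dimensional subspace together with $\gu_\C = \gu \oplus i\gu$ and boundedness of $\mathrm{d}\pi$ forces $i\gu$-invariance, hence $U_\C$-invariance since $U_\C$ is connected. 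This makes the argument entirely elementary once Theorem~\ref{smoothClassification} is in hand.
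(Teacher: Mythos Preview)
Your proof is correct and follows essentially the same approach as the paper: restrict $\pi$ to $U$, observe that holomorphicity implies smoothness so that Theorem~\ref{smoothClassification} yields local finiteness for $\pi|_U$, and then use uniqueness of holomorphic extension to upgrade each finite-dimensional $U$-invariant subspace to a $U_\C$-invariant one. Your write-up is somewhat more explicit than the paper's (spelling out the use of Theorem~\ref{boundedDifferential} and the identity-theorem argument for matching the extension with $\pi$), but the strategy is identical.
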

\begin{proof}
First we note that holomorphic representations are in particular smooth, and it follows that $\pi|_U$ is a smooth representation of $U$.  Hence $\pi|_U$ is locally-finite by Theorem~\ref{smoothClassification}.  Let $v\in\cH$ and consider the finite-dimensional subrepresentation of $\pi|_U$ on $V=\langle\pi(U)v\rangle$.  This representation has a unique holomorphic extension to a finite-dimensional representation of $U_\C$, which is thus a finite-dimensional subrepresentation of $\pi$ which contains $v$.
\end{proof}

\begin{corollary}
Suppose that $(\pi,\cH)$ is a continuous Hilbert representation of a connected lim-compact group $U_\infty$.  Then $\pi$ is smooth if and only if it is locally finite.  That is, $\cH^\infty = \cH$ if and only if $\cH^\text{fin} = \cH$.
\end{corollary}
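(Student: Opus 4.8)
The plan is to reduce the statement for the lim-compact group $U_\infty$ to the finite-dimensional case handled by Theorem~\ref{smoothClassification}, exactly as the preceding corollary did for holomorphic representations of $U_\C$. The implication $\cH^{\text{fin}} = \cH \implies \cH^\infty = \cH$ is immediate from the inclusion $\cH^{\text{fin}} \subseteq \cH^\infty$ noted in Definition~\ref{smoothnessDefinition}, so the only content is the forward direction: a smooth continuous Hilbert representation $(\pi,\cH)$ of $U_\infty = \varinjlim U_n$ is locally finite.

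So suppose $\pi$ is smooth, and fix $v \in \cH$. I must show $v$ is a locally finite vector, i.e., that $\langle \pi(U_n) v \rangle$ is finite-dimensional for each $n$. Fix $n$. By definition of smoothness of $\pi$, the vector $v$ is a smooth vector for the restricted representation $(\pi|_{U_n}, \cH)$; since $v$ was arbitrary, $\pi|_{U_n}$ is a smooth representation of the compact Lie group $U_n$ in the sense of Definition~\ref{smoothnessDefinition}, which for an honest finite-dimensional group coincides with the usual notion appearing in Theorem~\ref{smoothClassification}. The one technical wrinkle is that $\pi$ is only assumed continuous, not unitary, whereas Theorem~\ref{smoothClassification} is stated for unitary representations. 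For a compact group this is harmless: by averaging the inner product over $U_n$ against Haar measure, one obtains an equivalent $U_n$-invariant inner product on $\cH$ giving the same topology, so $\pi|_{U_n}$ is unitarizable and the hypotheses of Theorem~\ref{smoothClassification} apply. Hence $\pi|_{U_n}$ is locally finite, so $\langle \pi(U_n) v \rangle$ is finite-dimensional. Since this holds for every $n$, the vector $v$ lies in $\cH^{\text{fin}}$, and as $v \in \cH$ was arbitrary we conclude $\cH^{\text{fin}} = \cH$.

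The argument is essentially bookkeeping: the only place a genuine theorem is used is Theorem~\ref{smoothClassification}, and the only point requiring care is the passage from a continuous to a unitary representation of each compact $U_n$ via Haar-averaging of the inner product, together with checking that the direct-limit notion of smoothness restricts correctly to each $U_n$. I do not expect any serious obstacle; the essential work was already done in establishing Theorem~\ref{smoothClassification} and the preceding corollary for $U_\C$, and this corollary is the "real group" analogue obtained by the same reduction to finite levels.
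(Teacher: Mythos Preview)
Your proof is correct and follows essentially the same route as the paper: reduce to each $U_n$, unitarize $\pi|_{U_n}$ via Haar averaging, and invoke Theorem~\ref{smoothClassification} to pass between smoothness and local finiteness at each finite level. If anything, you are more careful than the paper in spelling out the unitarization step and in separating the trivial direction.
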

\begin{proof}
Fix $n\in\N$.  Then representation $(\pi|_{U_n},\cH)$ of $U_n$ may be unitarized.  Let $\cH^{\infty,n}$ denote the space of $U_n$-smooth vectors and let $\cH^{\text{fin},n}$ denote the space of $U_n$-finite vectors.  By Theorem~\ref{smoothClassification}, it follows that $\cH = \cH^{\infty,n}$ if and only if $\cH = \cH^{\text{fin},n}$.  The corollary then follows, since a vector in $\cH$ is $U_\infty$-smooth if and only if it is $U_n$-smooth for all $n\in\N$, and it is $U_\infty$-finite if and only if it is $U_n$-finite for all $n\in\N$.
\end{proof}

The following corollaries restate the conclusions of the previous theorem in terms of weights.  Different proofs of these corollaries may be found in Lemma 3.5 and Proposition 3.6 of \cite{NRW3}.
\begin{corollary}
\label{smoothnessFiniteWeightSet}
Fix a Cartan subalgebra $\gh$ in $\gu$, and suppose that $(\pi,\cH)$ is a unitary representation of $U$.  Then $\pi$ is smooth if and only if $\#\Delta(\pi)<\infty$ (that is, $\pi$ has only finitely many weights). 
\end{corollary}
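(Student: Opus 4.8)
The plan is to deduce this corollary directly from Theorem~\ref{smoothClassification}, since that theorem already characterizes smoothness as equivalence to a finite direct sum of primary representations. First I would recall the weight-space decomposition of the representation. Since $U$ is compact, $\pi$ decomposes as a Hilbert-space direct sum $\cH \cong_U \bigoplus_{\delta\in\widehat U} \cH_\delta$ of isotypic components, and by Theorem~\ref{smoothClassification} smoothness of $\pi$ is equivalent to the statement that $\cH_\delta = \{0\}$ for all but finitely many $\delta \in \widehat U$. So the entire task reduces to showing: $\cH_\delta = \{0\}$ for all but finitely many $\delta$ if and only if $\Delta(\pi)$ is finite.

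For the forward direction, if only finitely many isotypic components $\cH_{\delta_1},\ldots,\cH_{\delta_n}$ are nonzero, then each irreducible $\delta_i$ has only finitely many $\gh$-weights (the highest-weight theorem for compact connected Lie groups, together with the fact that each $\delta_i$ is finite-dimensional), so $\Delta(\pi) = \bigcup_{i=1}^n \Delta(\delta_i)$ is a finite set. For the converse, I would argue the contrapositive: if infinitely many isotypic components $\cH_{\delta}$ are nonzero, I want to produce infinitely many weights. The cleanest way is to use the highest weights: if $\delta$ and $\delta'$ are inequivalent irreducibles of a compact connected Lie group they have distinct highest weights, so if infinitely many $\cH_\delta$ are nonzero then infinitely many distinct dominant weights occur as highest weights of subrepresentations of $\pi$, and each such highest weight lies in $\Delta(\pi)$; hence $\Delta(\pi)$ is infinite. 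Combining both directions gives the equivalence.

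The main subtlety — really the only one — is that the corollary is stated for a unitary representation of $U$ without assuming $U$ is connected, whereas the highest-weight parametrization and the cleanest form of Theorem~\ref{smoothClassification}'s proof use connectedness of $U$. I would handle this by noting that $\widehat U$ is still discrete and every irreducible of $U$ is finite-dimensional (hence has finitely many $\gh$-weights for any choice of $\gh\subseteq\gu$), so the key inequality ``finitely many isotypic components $\iff$ finitely many weights'' still holds: the forward direction is immediate as above, and for the converse one observes that two inequivalent finite-dimensional irreducibles of $U$ restrict to (possibly reducible, but) representations of $U_0$ whose weight sets, taken together over all the isotypic pieces appearing in $\pi$, must be infinite if infinitely many pieces appear — because only finitely many irreducibles of $U$ can have weight set contained in any given finite subset of $i\gh^*$. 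This last claim follows since an irreducible of $U$ is determined up to finite ambiguity by its restriction to $U_0$, and irreducibles of $U_0$ are determined by their highest weight.

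I expect the write-up to be short: essentially ``apply Theorem~\ref{smoothClassification}, then translate between the index set $\widehat U$ and the weight set $\Delta(\pi)$ using the highest-weight theorem.'' The only place requiring a little care is the finiteness bookkeeping in the converse direction and the minor point about connectedness of $U$, neither of which is a serious obstacle.
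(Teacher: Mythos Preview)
Your approach is correct and is exactly the paper's: the paper simply says the corollary follows immediately from the equivalence (1)$\iff$(2) in Theorem~\ref{smoothClassification}, and you have spelled out the straightforward translation between ``finitely many nonzero isotypic components'' and ``finitely many $\gh$-weights'' that the paper leaves implicit. Your extra discussion of the non-connected case is unnecessary in context (the paper works throughout with connected $U$ and the highest-weight parametrization), but it does no harm.
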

\begin{proof}
This follows immediately from the equivalence of conditions (1) and (2) in Theorem~\ref{smoothClassification}.
\end{proof}

\begin{corollary}
\label{smoothnessFiniteWeightSetLimit}
Suppose that $U_\infty$ is a lim-compact group.  As before, we fix a subalgebra $\gh_\infty = \varinjlim\gh_n$ in $\gu_\infty$, where each $\gh_n$ is a Cartan subalgebra of $\gu_n$.  Suppose that $(\pi,\cH)$ is a unitary representation of $U$.  Then $\pi$ is smooth if and only if $\#\Delta(\pi|_{U_n})<\infty$ (that is, $\pi$ has only finitely many weights) for each $n\in\N$. 
\end{corollary}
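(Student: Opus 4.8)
The plan is to reduce this statement about the lim-compact group $U_\infty$ to the already-established finite-dimensional case (Corollary~\ref{smoothnessFiniteWeightSet}) applied to each $U_n$. The key observation is that both smoothness of $\pi$ and the condition ``$\#\Delta(\pi|_{U_n})<\infty$'' are, by definition, conjunctions over $n$ of local conditions attached to the individual groups $U_n$. So it suffices to show that for each fixed $n$, the representation $\pi|_{U_n}$ is smooth (as a representation of the compact Lie group $U_n$, after unitarizing) if and only if it has finitely many $\gh_n$-weights.

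First I would recall, exactly as in the proof of the preceding corollary, that for each fixed $n$ the restricted representation $(\pi|_{U_n},\cH)$ can be unitarized, so that Corollary~\ref{smoothnessFiniteWeightSet} applies verbatim: $\pi|_{U_n}$ is smooth as a $U_n$-representation if and only if $\#\Delta(\pi|_{U_n})<\infty$. Next I would invoke the definition of smoothness for direct-limit groups (Definition~\ref{smoothnessDefinition}): a vector $v\in\cH$ is a smooth vector for $\pi$ precisely when it is a smooth vector for $(\pi|_{G_n},\cH)$ for every $n\in\N$; hence $\cH^\infty=\cH$ if and only if $\cH$ consists entirely of $U_n$-smooth vectors for every $n$, i.e.\ if and only if $\pi|_{U_n}$ is a smooth representation of $U_n$ for each $n$. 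Combining these two equivalences gives the claim: $\pi$ is smooth $\iff$ $\pi|_{U_n}$ is smooth for all $n$ $\iff$ $\#\Delta(\pi|_{U_n})<\infty$ for all $n$.

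The one point requiring a little care is the compatibility of the weight sets $\Delta(\pi|_{U_n})$ with the chain structure: since $\gh_\infty=\varinjlim\gh_n$ with $\gh_n\subseteq\gh_{n+1}$, restriction of weights gives maps $\ga_{n+1}^*\to\ga_n^*$ (more precisely $\gh_{n+1}^*\to\gh_n^*$), and a weight of $\pi|_{U_n}$ is the restriction to $\gh_n$ of a weight of $\pi|_{U_{n+1}}$. This means the conditions $\#\Delta(\pi|_{U_n})<\infty$ are nested in a natural way, but I would emphasize that no monotonicity argument is actually needed for the equivalence — the statement is a plain conjunction over all $n$ and follows term-by-term. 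I expect the main (very minor) obstacle to be purely bookkeeping: making sure the unitarization used for each $U_n$ is harmless (it does not change the space of smooth vectors, since smoothness of a continuous representation on a fixed Hilbert space is unaffected by passing to an equivalent inner product) and that the weight-set notation $\Delta(\pi|_{U_n})$ is understood relative to $\gh_n$. Given Corollary~\ref{smoothnessFiniteWeightSet}, the proof is essentially one line: ``Apply Corollary~\ref{smoothnessFiniteWeightSet} to $\pi|_{U_n}$ for each $n$ and use Definition~\ref{smoothnessDefinition}.''
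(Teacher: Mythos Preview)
Your proposal is correct and follows exactly the paper's approach: the paper's proof is the single sentence ``This result follows immediately from Corollary~\ref{smoothnessFiniteWeightSet} and the fact that $\pi$ is smooth if and only if $\pi|_{U_n}$ is smooth for each $n$.'' Your additional remarks about unitarization are unnecessary here since $\pi$ is assumed unitary, and the compatibility of weight sets under restriction, while true, is (as you yourself note) not actually used in the argument.
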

\begin{proof}
This result follows immediately from Corollary~\ref{smoothnessFiniteWeightSet} and the fact that $\pi$ is smooth if and only if $\pi|_{U_n}$ is smooth for each $n$.
\end{proof}

Suppose now that $U_\infty$ is a propagated lim-compact group.  We recursively choose a countable orthonormal basis
 $\{e_i\}_{i\in\N}$ for $\gh_\infty$ as in Section~\ref{propagationSection}.  Consider the supremum norm of a weight $\lambda\in i\gh_n^*$, given by 
 \[
    ||\lambda||_\infty = \max_{1\leq i\leq r_n} |\lambda(e_i)|
 \]
 We then obtain the following useful theorem, which is a slight generalization of \cite[Proposition 3.14]{NRW3}:
\begin{theorem}
\label{bounded_coefficients}
%\marginpar{It may be possible to somewhat weaken the condition here that $U_\infty$ be propagated}
A unitary representation $(\pi,\cH)$ of a propagated direct limit $U_\infty$ of simply-connected compact semisimple Lie groups is smooth if and only if there is $M>0$ such that for all $n$ one has $||\lambda||_\infty<M$ for each weight $\lambda\in i\gh_n^*$ that appears as the highest weight for an irreducible subrepresentation of $\pi|_{U_n}$.
\end{theorem}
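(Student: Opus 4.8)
The plan is to reduce Theorem~\ref{bounded_coefficients} to the local finiteness criterion already obtained in Corollary~\ref{smoothnessFiniteWeightSetLimit} and Corollary~\ref{smoothnessFiniteWeightSet}, and then to exploit the propagation hypothesis to convert a statement about the number of weights at each finite stage into the single uniform bound $M$. First I would recall that, by Corollary~\ref{smoothnessFiniteWeightSetLimit}, $\pi$ is smooth if and only if $\#\Delta(\pi|_{U_n}) < \infty$ for every $n$, equivalently (by Corollary~\ref{smoothnessFiniteWeightSet}) if and only if the set $\cS_n$ of highest weights of irreducible subrepresentations of $\pi|_{U_n}$ is finite for each $n$. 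So the real content is the equivalence between ``$\cS_n$ finite for all $n$'' and ``$\sup_n \sup_{\lambda\in\cS_n} ||\lambda||_\infty < \infty$.''

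For the easy direction, if such an $M$ exists then for each fixed $n$ the set $\cS_n$ is contained in the intersection of the discrete dominant weight lattice $\Lambda^+(\gu_n,\gh_n)$ with the $||\cdot||_\infty$-ball of radius $M$, which is finite; hence $\pi|_{U_n}$ has only finitely many weights and $\pi$ is smooth. For the converse I would argue by contradiction: suppose $\pi$ is smooth but the coefficients are unbounded, i.e.\ there are $n_k\to\infty$ and highest weights $\lambda_k\in\cS_{n_k}$ with $||\lambda_k||_\infty\to\infty$. The key is to use propagation to ``restrict'' these weights down to a fixed stage while keeping their sup-norm large. By the definition of a propagated direct limit, each $\Psi_n$ is obtained from $\Psi_k$ ($k\le n$) only by adjoining simple roots at the left end, and there is a well-defined restriction map $p_n^k:\ga_n^*\to\ga_k^*$; the corresponding restriction of dominant weights sends fundamental weights to fundamental weights or zero in a controlled way, so that for a dominant $\lambda\in i\gh_n^*$ the coefficient $|\lambda(e_i)|$ for $i\le r_k$ is essentially preserved under restriction to $\gh_k$ (this is where I would invoke the numbering convention~(\ref{rootorder}) and the structure of the $\xi_j$).

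Concretely, the plan is: fix a single index $m$; by the decomposition $U_k = U_k^1\times\cdots\times U_k^d$ into simple factors and the fact that each $\{U_k^i\}$ is itself a propagated direct system, it suffices to treat each simple factor separately, so assume $U_\infty$ is simple. Write $\lambda_k=\sum_j n_j^{(k)}\xi_j^{(n_k)}$ in terms of the fundamental weights at stage $n_k$ (with the left-to-right numbering of~(\ref{rootorder})). Because the Dynkin diagrams are stably of one classical type and grow only at the left, the right-most coordinates $e_1,\ldots,e_{r_m}$ and the values of the first few fundamental weights on them stabilize; so if $||\lambda_k||_\infty=\max_i|\lambda_k(e_i)|$ is large, either one of the first $r_m$ coordinates is large — in which case $||\lambda_k|_{\gh_m}||_\infty$ is already large and $\cS_m$ meets arbitrarily large $||\cdot||_\infty$-balls, contradicting finiteness of $\cS_m$ — or the large coordinate sits among the ``new'' positions $e_{r_m+1},\ldots,e_{r_{n_k}}$, in which case I would instead restrict to a stage $m'$ with $r_m < r_{m'}$ chosen large enough to capture that position. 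Iterating / choosing $m$ large shows that for every $m$ the set $\cS_m$ is unbounded in $||\cdot||_\infty$, hence infinite, contradicting smoothness.

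I expect the main obstacle to be the bookkeeping in the second paragraph's key step: making precise the claim that restricting a dominant weight from $\gh_n$ to $\gh_k$ preserves (up to the controlled combinatorics of the four classical diagrams $A,B,C,D$ with left-end propagation) the sup-norm of its coordinate vector, including the slightly delicate $A_r$ case where the identification $\R^{r+1}\to\ga^*$ is not injective and one must fix a normalization of the $e_i$. Handling the spin nodes at the short/long end of $B_r,C_r,D_r$ and the fork in $D_r$ requires checking that the relevant fundamental weights have bounded coefficients independent of $r$ on the stabilized coordinates; this is a finite check per diagram type, essentially the same computation underlying \cite[Proposition 3.14]{NRW3}, and I would either cite it or carry it out case by case. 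Everything else — the discreteness of $\Lambda^+$, finiteness of lattice balls, and the passage between ``finitely many weights'' and ``finitely many highest weights'' — is routine given the results already established in the excerpt.
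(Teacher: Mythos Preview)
Your easy direction is fine and matches the paper. The gap is in the converse. The claim you flag as ``the main obstacle'' --- that restricting a dominant weight from $\gh_n$ to $\gh_k$ essentially preserves $\|\cdot\|_\infty$ --- is not just a bookkeeping issue: it is false. With the paper's ordering, a dominant weight $\lambda=\sum_i c_i e_i$ in any of the classical types has (weakly) increasing coordinates in absolute value, so $\|\lambda\|_\infty=|c_{r_n}|$ is attained at the \emph{last} position. Restriction to $\gh_k$ keeps only $c_1,\ldots,c_{r_k}$, and $|c_{r_k}|$ can be arbitrarily small while $|c_{r_n}|$ is large (e.g.\ $\lambda=(0,\ldots,0,C)$). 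Your proposed fix of passing to a larger stage $m'$ does not help: the position at which the large coordinate sits may drift to infinity with $k$, so no single fixed stage ever witnesses an infinite $\cS_m$, and the ``iterating'' step does not yield the stated conclusion.

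The paper closes this gap with a different idea: instead of staying with highest weights, it first applies a Weyl group element $w$ of $U_n$ swapping $e_1$ and $e_j$ (available in each classical type) to move the large coordinate of $\lambda$ to position~$1$. Then $w\lambda$ is a \emph{weight} (not a highest weight) of $\pi|_{U_n}$ with $|(w\lambda)(e_1)|>M$, and since $e_1$ lies in every $\gh_k$, the restriction $(w\lambda)|_{\gh_k}$ is a weight of $\pi|_{U_k}$ with $\|\cdot\|_\infty>M$. This shows $\Delta(\pi|_{U_k})$ is infinite for a \emph{fixed} $k$, and one concludes via Corollary~\ref{smoothnessFiniteWeightSetLimit} (finiteness of all weights, not just highest weights). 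So the missing ingredient in your plan is precisely this Weyl-group-then-restrict maneuver, together with the willingness to work with $\Delta(\pi|_{U_k})$ rather than $\cS_k$.
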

\begin{proof}

First we prove the theorem in the case that $U_\infty$ is a direct limit of compact simple Lie groups.

Let $(\pi,\cH)$ be a unitary representation of $U_\infty$.  Suppose there is $M\in\N$ such that for all $n$ one has $||\lambda||_\infty<M$ for each weight $\lambda\in i\gh_n^*$ that appears as the highest weight for an irreducible subrepresentation of $\pi|_{U_n}$.  If $\lambda\in i\gh^*$ is a highest weight which appears in $\pi|_{U_n}$, then it has the form
\[
   \lambda = \sum_{i=1}^{r_n} a_i e_i, \text{ where } a_i\in \Z \text{ and } -M\leq a_i\leq M. 
\]
Thus, there are only $(2M)^{r_n}$ possible values for $\lambda$.  In other words, $\pi|_{U_n}$ may be written as a direct sum of finitely many primary representations and is thus smooth by Theorem~\ref{smoothClassification}.  Because $n\in\N$ was arbitrary, we have that $\pi$ is smooth.

To prove the other direction, suppose that for each $M>0$ there is $n\in\N$ and a highest weight $\lambda\in i\gh_n^*$ of an irreducible subrepresentation of $\pi|_{U_n}$ such that $||\lambda||_\infty > M$. Fix $M>0$ and pick $n\in\N$ and $\lambda\in i\gh_n^*$ satisfying those conditions. Then $\lambda = \sum_{i=1}^{r_n} c_i e_i$, where $c_i\in\Z$ for each $i$.  Because $||\lambda||_\infty > M$, we see that there is some index $j$ such that $|c_j|>M$.

By considering the $A_n$, $B_n$, $C_n$, and $D_n$ cases separately, we see that there is a Weyl group element $w\in W(\gg_n,\ga_n) $ such that $w(e_1) = e_i$ and $w(e_i) = e_1$.  Then $|(w\lambda)(e_1)| = |c_j| > M$.   By the Highest-Weight Theorem, we see that $w\lambda\in\Delta(\pi|_{U_n})$; that is, $w\lambda$ is an $\gh_n$-weight for $\pi|_{U_n}$.  It is then clear that $(w\lambda)|_{\gh_k}$ is an $\gh_k$-weight for $\pi|_{U_k}$ whenever $k\leq n$ (since every $w\lambda$-weight vector in $\cH$ is automatically a $(w\lambda)_{\gh_k}$-weight vector).  Furthermore, since $|(w\lambda|_{\gk_n})(e_1)| = |c_j| > M$, we see that $||w\lambda|_{\gk_n}||_\infty > M$.  
 
 Thus, if $k\in\N$ is fixed, then for each $M\in\N$ there is a weight $\lambda\in\Delta(\pi|_{U_k})$ such that $||\lambda||>M$.  Hence $\Delta(\pi|_{U_k})$ is not a finite set and thus by Corollary~\ref{smoothnessFiniteWeightSetLimit} it follows that $\pi$ is not smooth.  
 
 Suppose more generally that $U_\infty$ is a propagated direct limit of semisimple Lie groups.  Then we can write $U_k = U_k^1\times U_k^2 \times \cdots \times U_k^d$ for all $k\in \N$ in such a way that $\{U_n^i\}_{n\in\N}$ is a propagated direct system of compact simple Lie groups for each $1\leq i \leq d$.  We can then recursively choose Cartan subalgebras $\gh_n = \gh_n^i\oplus \gh_n^2 \oplus \cdots \oplus \gh_n^d$, where $\gh_n^i$ is a Cartan subalgebra of $\gu_n^i$ for each $i$ and $n$.  A weight in $\lambda \in i\gh_n^*$ is dominant integral if and only if $\lambda|_{\gh_n^i}$ is dominant integral for each $1\leq i\leq d$.  Since $U_\infty^i$ is a propagated direct limit of compact simple Lie groups, it follows that there is $M_i>0$ such that for all $n\in\N$ one has that $||\lambda||_\infty<M_i$ for each highest weight $\lambda\in\gh_n^*$ appearing in $\pi|_{U_n^i}$.  Since $\displaystyle\max_{1\leq i\leq d} M_i < \infty$, we are done. 
\end{proof}

We end the section with the following remarkable result, which implies that the smoothness of a representation of a direct limit of simple compact groups is controlled by the smoothness of the restriction to any nontrivial one-dimensional analytic subgroup.   

\begin{theorem}
\label{holomorphicControl}
Let $U$ be a compact simple Lie group. Then a unitary representation $(\pi,\cH)$ of $U$ is smooth if and only if there is $X\in \gu\backslash\{0\}$ such that $\mathrm{d}\pi(X)$ is a bounded operator on $\cH$. 
\end{theorem}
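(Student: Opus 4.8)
The plan is to dispatch the forward implication trivially and put all the work into the converse. If $\pi$ is smooth, then by Theorem~\ref{boundedDifferential} the differential $\mathrm{d}\pi$ is a representation of $\gu$ by bounded operators, so any $X\in\gu\setminus\{0\}$ already has $\mathrm{d}\pi(X)$ bounded, and there is nothing further to prove.

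For the converse, suppose $X\in\gu\setminus\{0\}$ is such that $\mathrm{d}\pi(X)$ is bounded, with $\|\mathrm{d}\pi(X)\|=C$. First I would propagate this one bound over the whole adjoint orbit: for $g\in U$, differentiating $\pi(\exp t\,\Ad(g)X)=\pi(g)\pi(\exp tX)\pi(g)^{-1}$ at $t=0$ gives $\mathrm{d}\pi(\Ad(g)X)=\pi(g)\,\mathrm{d}\pi(X)\,\pi(g)^{-1}$, again a bounded operator of norm $C$. Now I would use simplicity of $U$: the linear span of the adjoint orbit $\Ad(U)X$ is an $\ad(\gu)$-invariant subspace of $\gu$, hence an ideal, hence (being nonzero since $X\neq 0$) all of $\gu$. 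So I can pick $H_1,\dots,H_n\in\Ad(U)X$ forming a basis of $\gu$, where $n=\dim\gu$, and each $\mathrm{d}\pi(H_j)$ is bounded by $C$. Let $H^1,\dots,H^n$ be the dual basis of $\gu$ with respect to $-\kappa$, where $\kappa$ is the (negative definite) Killing form; each $H^j$ is a fixed linear combination of the $H_i$'s, so each $\mathrm{d}\pi(H^j)$ is bounded by some constant $C'$, and therefore $B:=\sum_{j=1}^n \mathrm{d}\pi(H_j)\,\mathrm{d}\pi(H^j)$ is a bounded operator on $\cH$ with $\|B\|\le nCC'$.

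Next I would read off the isotypic content via the Casimir. Since $U$ is compact, Peter--Weyl decomposes $\cH\cong\bigoplus_\mu m_\mu V_\mu$ (Hilbert space direct sum) into finite-dimensional irreducibles $(\tau_\mu,V_\mu)$; each $V_\mu$ is a $\mathrm{d}\pi$-invariant subspace, on which $B$ restricts to $\sum_j \mathrm{d}\tau_\mu(H_j)\,\mathrm{d}\tau_\mu(H^j)=\mathrm{d}\tau_\mu(\Omega)$ for the Casimir element $\Omega=\sum_j H_jH^j$. By Schur this is the scalar $c(\mu)=\langle\mu+\rho,\mu+\rho\rangle-\langle\rho,\rho\rangle$, so $|c(\mu)|\le\|B\|\le nCC'$ for every $\mu$ with $m_\mu\neq 0$. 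Since $c(\mu)\to\infty$ as $\|\mu\|\to\infty$ and the dominant weights form a discrete set, only finitely many such $\mu$ can occur; that is, $\pi$ is a finite direct sum of primary representations, and hence is smooth by Theorem~\ref{smoothClassification} (equivalently, it has finitely many weights, so Corollary~\ref{smoothnessFiniteWeightSet} applies).

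The only step with genuine content is the middle one: combining simplicity of $\gu$ with the orbit identity $\mathrm{d}\pi(\Ad(g)X)=\pi(g)\mathrm{d}\pi(X)\pi(g)^{-1}$ to upgrade boundedness of $\mathrm{d}\pi$ in a single nonzero direction to boundedness on a full basis of $\gu$. After that, the Casimir eigenvalue estimate $|c(\mu)|\le nCC'$ is a one-line finish and the Peter--Weyl reductions are routine. It is worth noting that simplicity is used in an essential way: if $\gu$ had a simple factor not met by $X$, the action of that factor would be completely unconstrained, and the theorem would fail.
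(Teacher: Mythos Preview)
Your proof is correct and takes a genuinely different route from the paper's.

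The paper argues by contrapositive: if $\pi$ is not smooth, then by Corollary~\ref{smoothnessFiniteWeightSet} the weight set $\Delta(\pi)$ is infinite, and using the Weyl group (with an explicit case check for types $A_n$, $B_n$, $C_n$, $D_n$) one shows that the weights are unbounded along every coordinate direction $e_i$ of a fixed Cartan subalgebra $\gh$, hence $\mathrm{d}\pi(X)$ has unbounded spectrum for every $X\in\gh$; since every $X\in\gu$ lies in some Cartan subalgebra, the conclusion follows. Your argument instead works directly: the conjugation identity $\mathrm{d}\pi(\Ad(g)X)=\pi(g)\,\mathrm{d}\pi(X)\,\pi(g)^{-1}$ together with simplicity of $\gu$ spreads boundedness from a single direction to a full basis, and then the Casimir eigenvalue bound forces only finitely many isotypes to occur.

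What each approach buys: the paper's argument stays entirely inside the weight combinatorics already developed in the surrounding sections, but as written it only treats the classical root systems case by case. Your argument is uniform over all compact simple $U$ (including exceptional types), and the use of simplicity via $\mathrm{span}\,\Ad(U)X=\gu$ makes the role of the hypothesis completely transparent. One small remark: the step ``each $H^j$ is a linear combination of the $H_i$'s, so $\mathrm{d}\pi(H^j)$ is bounded'' implicitly uses that $\mathrm{d}\pi$ is linear on $\gu$; this is immediate on the dense space of smooth vectors, and closedness of infinitesimal generators (Stone's theorem) then promotes the identity to all of $\cH$. You could also shortcut the Casimir step entirely: once each $\mathrm{d}\pi(H_i)$ is bounded, Theorem~\ref{theoremInRudin} makes each one-parameter group $t\mapsto\pi(\exp tH_i)$ norm-continuous, and writing a neighborhood of $e$ as a product $\exp(t_1H_1)\cdots\exp(t_nH_n)$ gives norm-continuity of $\pi$, hence smoothness by Theorem~\ref{boundedDifferential}.
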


\begin{proof}
One direction is obvious.  To show the other direction, suppose that $(\pi,\cH)$ is a non-smooth unitary representation of $U$.  We will show that $\mathrm{d}\pi(X)$ has an unbounded spectrum for any $X\in \gu \backslash\{0\}$. Let $\gh$ be any Cartan subalgebra for $U$.

Because $\pi$ is not smooth, it follows that there is for each $M>0$ weight $\lambda\in\Delta(\pi)$ with $||\lambda||_\infty>M$.  As in the proof of Theorem~\ref{bounded_coefficients}, we see that for each Weyl-group element $w\in W(\gu,\gh)$, the weight $w\lambda$ is in $\Delta(\pi)$.  If we write $\lambda = \sum_{i=1}^{r} a_i e_i$, then there is some $j$ such that $|a_j|>M$.  We can use the Weyl group to permute the basis elements so that $a_j$ appears as the $i^\text{th}$ coefficient of a weight in $\Delta(\pi|_U)$.  Thus we have that the set
\[
\{\langle \lambda,e_i\rangle | \lambda\in\Delta(\pi)\}
\]
 of $i^\text{th}$ coefficients of weights of $\pi$ is unbounded for all $i\leq r$.
 
In other words, one has for each $n\in\N$ that the set of weights in $\Delta(\pi)$ is unbounded in every direction on $\gh$. 
 It follows that $\mathrm{d}\pi(X)$ has an unbounded spectrum for all $X\in\gh$. Because every element of $\gu$ is contained in some Cartan subalgebra, the result follows.
\end{proof}
\begin{corollary}
\label{holomorphic_equivalence}
Let $U_\infty$ be a direct limit of compact simple Lie groups. Then a unitary representation $(\pi,\cH)$ of $U_\infty$ is smooth if and only if there is $X\in\gu\backslash\{0\}$ such that $\mathrm{d}\pi(X)$ is a bounded operator on $\cH$. 
\end{corollary}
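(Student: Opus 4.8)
The plan is to bootstrap from Theorem~\ref{holomorphicControl}, which settles the statement for a single compact simple Lie group, to the direct-limit setting by arguing one level of the directed system at a time. Throughout, $\gu$ means $\gu_\infty = \Lie(U_\infty) = \varinjlim \gu_n$, and for $X\in\gu_n\backslash\{0\}$ the operator $\mathrm{d}\pi(X)$ is understood as the infinitesimal generator of the strongly continuous one-parameter unitary group $t\mapsto \pi(\exp tX)$, which exists by Stone's theorem; the hypothesis is that this (a priori unbounded, skew-adjoint) generator is bounded.

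The forward direction is immediate. If $\pi$ is smooth then, by the very definition of smoothness for direct-limit groups, $\pi|_{U_n}$ is a smooth representation of $U_n$ for every $n\in\N$; Theorem~\ref{boundedDifferential} then gives $\mathrm{d}\pi(Y)\in\cB(\cH)$ for every $Y\in\gu_n$, and choosing any $n$ and any $X\in\gu_n\backslash\{0\}$ produces an $X$ of the desired form.

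For the converse, suppose $X\in\gu\backslash\{0\}$ with $\mathrm{d}\pi(X)$ bounded. Since $\gu=\varinjlim\gu_n$, fix $m$ with $X\in\gu_m$; because the connecting maps $\gu_n\hookrightarrow\gu$ are injective, $X\in\gu_n\backslash\{0\}$ for every $n\geq m$. For each such $n$, $\pi|_{U_n}$ is a unitary representation of the compact \emph{simple} Lie group $U_n$, and the one-parameter subgroup $t\mapsto\pi(\exp tX)$ — hence its generator $\mathrm{d}(\pi|_{U_n})(X)=\mathrm{d}\pi(X)$ — is intrinsic, i.e.\ independent of whether $X$ is viewed inside $U_n$ or inside $U_\infty$. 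Thus $\mathrm{d}(\pi|_{U_n})(X)$ is bounded, and Theorem~\ref{holomorphicControl} applies to conclude that $\pi|_{U_n}$ is smooth for all $n\geq m$.

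It remains to propagate smoothness to the lower levels: since $\pi|_{U_m}$ is smooth, every vector of $\cH$ is a $U_m$-smooth vector, hence a $U_k$-smooth vector for every $k\leq m$ (a smooth vector for a representation restricts to a smooth vector for the restriction to a closed subgroup), so $\pi|_{U_k}$ is smooth for $k\leq m$ as well. Therefore $\pi|_{U_n}$ is smooth for every $n\in\N$, which by definition means $\pi$ is smooth. I do not expect a genuine obstacle here; the only point deserving a line of care is the intrinsic-ness of "$\mathrm{d}\pi(X)$ bounded," which follows at once from the uniqueness of the infinitesimal generator of the strongly continuous unitary one-parameter group $t\mapsto\pi(\exp tX)$.
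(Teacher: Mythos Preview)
Your proof is correct and follows the same approach as the paper, which simply says to apply Theorem~\ref{holomorphicControl} to each $U_n$; you have merely filled in the details the paper omits, namely the trivial forward direction, the observation that $X\in\gu_m\backslash\{0\}$ implies $X\in\gu_n\backslash\{0\}$ for $n\ge m$, and the propagation of smoothness down to $U_k$ for $k<m$ via restriction to a closed subgroup.
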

\begin{proof}
This corollary follows immediately by applying Lemma~\ref{holomorphicControl} to $U_n$ for each $n$ in $\N$.
\end{proof}

Note that this result is false for non-simple compact groups: suppose that $J$ and $T$ are compact Lie groups, that $(\pi,\cH)$ is a smooth unitary representation of $J$, and that $(\sigma, \cK)$ is a non-smooth unitary representation of $T$.  Then the outer tensor product representation
$(\pi \boxtimes \sigma, \cH\otimes \cK)$ of $J\times T$ has the property that $\pi|_J$ is smooth but $\pi|_T$ is non-smooth.

\subsection{Generalizing Weyl's Unitary Trick}
Weyl's Unitary Trick plays a crucial role in understanding finite-dimensional representations of finite-dimensional Lie groups.  There is a natural extension of Weyl's Unitary Trick to locally-finite representations of direct-limit groups.  The first step is to extend Weyl's unitary trick to smooth representations of finite-dimensional groups.  We begin with a well-known lemma on intertwining operators of smooth representations.
\begin{lemma}
\label{intertwiningDifferential}
Suppose that $(\pi,\cH)$ is a smooth Hilbert representation of a Lie group $G$.  Then the derived representation $\mathrm{d}\pi:\gg\rightarrow\cB(\cH)$ possesses the same algebra of intertwining operators as $\pi$.
\end{lemma}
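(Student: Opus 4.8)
The plan is to reduce the whole statement to Theorem~\ref{boundedDifferential}. Since $\pi$ is smooth, that theorem supplies a Lie algebra representation $\mathrm{d}\pi\colon\gg\to\cB(\cH)$ by \emph{bounded} operators with $\pi(\exp X)=\exp(\mathrm{d}\pi(X))$ for all $X\in\gg$; in particular the algebra of intertwining operators of $\mathrm{d}\pi$ --- the bounded operators $T\in\cB(\cH)$ with $T\,\mathrm{d}\pi(X)=\mathrm{d}\pi(X)\,T$ for every $X$ --- is unambiguously defined, with no domains to keep track of. I would begin by noting that we may assume $G$ connected, which is the case in all of our applications (the derived representation, and hence its commutant, only sees the identity component).

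The first inclusion I would check is that every intertwining operator for $\pi$ also intertwines $\mathrm{d}\pi$. Given $T\in\cB(\cH)$ commuting with all $\pi(g)$, fix $X\in\gg$ and $v\in\cH$; since $\pi$ is smooth, $Tv$ lies in the domain of $\mathrm{d}\pi(X)$, which is all of $\cH$, and because $T$ is bounded, hence continuous, it can be moved through the defining norm-limit:
\[
T\,\mathrm{d}\pi(X)v=T\lim_{t\to 0}\frac{\pi(\exp tX)v-v}{t}=\lim_{t\to 0}\frac{\pi(\exp tX)Tv-Tv}{t}=\mathrm{d}\pi(X)Tv .
\]
Hence $T$ commutes with each $\mathrm{d}\pi(X)$. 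For the converse I would argue that if $T\in\cB(\cH)$ commutes with $\mathrm{d}\pi(X)$ for all $X$, then, $\mathrm{d}\pi(X)$ being bounded, the series $\exp(\mathrm{d}\pi(X))=\sum_{k\ge 0}\mathrm{d}\pi(X)^k/k!$ converges in operator norm, $T$ commutes with every partial sum, hence with the limit $\exp(\mathrm{d}\pi(X))=\pi(\exp X)$. Since the operators commuting with a fixed $T$ form a subalgebra closed under inverses, $T$ then commutes with the subgroup of $\pi(G)$ generated by $\{\pi(\exp X):X\in\gg\}$, which by connectedness of $G$ is all of $\pi(G)$; thus $T$ intertwines $\pi$, giving equality of the two commutants.

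I do not anticipate a genuine obstacle: the entire content of the lemma is concentrated in Theorem~\ref{boundedDifferential}, which is precisely what makes $\mathrm{d}\pi$ globally defined and bounded and links the group exponentials to the Lie-algebra operators. Without the smoothness hypothesis one would be forced to work with unbounded $\mathrm{d}\pi(X)$ and their domains, where the two commutants can genuinely differ, so smoothness is exactly the ingredient that removes the difficulty. The only point to state carefully --- and it is routine --- is that commutation with each generator $\pi(\exp X)$ propagates to the whole connected group, together with the reduction to the identity component when $G$ is disconnected.
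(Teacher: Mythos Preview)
The paper does not prove this lemma at all; it merely labels it ``well-known'' and states it without argument. Your proposal is a correct and complete proof of the statement for connected $G$, and your reduction to Theorem~\ref{boundedDifferential} is exactly the right tool: once $\mathrm{d}\pi$ is globally defined and bounded with $\pi(\exp X)=\exp(\mathrm{d}\pi(X))$, both inclusions of commutants follow by the differentiation and power-series arguments you give.

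Your remark about connectedness is a genuine improvement over the paper's formulation. As stated (for an arbitrary Lie group $G$), the lemma is false: for a disconnected $G$ the derived representation sees only the identity component, so an operator can commute with every $\mathrm{d}\pi(X)$ while failing to commute with $\pi(g)$ for $g$ outside $G_0$. The paper silently relies on connectedness in all applications (the groups $U_n$, $G_n$, $(U_n)_\C$ appearing in Theorem~\ref{WeylInfiniteTrick} and Corollary~\ref{GeneralizedWeylTrick} are connected), so your hedge ``which is the case in all of our applications'' is appropriate and worth making explicit.
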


Now we are ready to extend Weyl's Trick to smooth representations of finite-dimensional groups.

\begin{theorem}
\label{WeylInfiniteTrick}
Suppose that $U$ is a compact Lie group and that $G$ is a (not necessarily compact) closed subgroup of $U_\C$ such that $U_\C$ is a complexification of $G$.  There are one-to-one correspondences between the following categories of representations on $\cH$ which preserve the algebras of intertwining operators:
\begin{enumerate}
\item Locally-finite representations of $G$ on $\cH$
\item Holomorphic representations of $U_\C$ on $\cH$
\item Smooth representations of $U$ on $\cH$
\end{enumerate}

%Furthermore, a smooth representation of $U$ on $\cH$ is unitary if and only if the corresponding representation of $G$ on $\cH$ has the property that $\pi(g)^* = \pi(\theta(g^{-1}))$ for all $g\in G$.
\end{theorem}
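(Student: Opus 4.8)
The strategy is to build the correspondences in two stages, exploiting the fact that $G$ and $U$ are both real forms of the complex group $U_\C$, and that holomorphic representations sit naturally in between. First I would establish the bijection between holomorphic representations of $U_\C$ and smooth representations of $U$. Given a smooth unitary (or merely smooth, after unitarization) representation $(\pi,\cH)$ of $U$, Theorem~\ref{smoothClassification} says $\pi$ decomposes into a finite direct sum of primary representations of $U$, and each primary summand is (by the Gelfand--Mazur-type result from \cite{GM} cited earlier) a multiple of a finite-dimensional irreducible. Each finite-dimensional representation of $U$ has a unique holomorphic extension to $U_\C$, and assembling these on the finitely many isotypic components produces a holomorphic representation of $U_\C$ on the same space $\cH$. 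Conversely, the Corollary following Theorem~\ref{smoothClassification} shows a holomorphic representation of $U_\C$ is locally finite, hence in particular its restriction to $U$ is smooth. That these two operations are mutually inverse is immediate from uniqueness of holomorphic extension on each finite-dimensional piece, and the algebra of bounded intertwining operators is literally unchanged since the underlying space $\cH$ and the collection of invariant subspaces are the same (an operator commuting with $\pi(U)$ commutes with the derived action by Lemma~\ref{intertwiningDifferential}, hence with the holomorphic extension, and vice versa).

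Next I would relate locally-finite representations of $G$ to holomorphic representations of $U_\C$. This is exactly Weyl's unitary trick applied componentwise. If $(\rho,\cH)$ is a locally-finite representation of $G$, then $\cH$ is a (possibly infinite) algebraic direct sum of finite-dimensional $G$-invariant subspaces; on each such subspace the restriction is a finite-dimensional representation of $G$, which extends uniquely to a holomorphic representation of $U_\C$ because $U_\C$ is a complexification of $G$ (the classical unitary trick). One must check these extensions are compatible on overlaps and assemble to a genuine (continuous, hence by Theorem~\ref{boundedDifferential} applied on each finite-dimensional piece, well-behaved) holomorphic representation of $U_\C$ on all of $\cH$; continuity of the global representation follows since every vector lies in a finite-dimensional invariant subspace where continuity is automatic, and boundedness of $\pi_\infty(g)$ on the pre-Hilbert space is checked orbit by orbit. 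Conversely, restricting a holomorphic representation of $U_\C$ to $G$ gives a locally-finite representation by the same Corollary. Composing the two stages yields the three-way correspondence, and since each stage preserves the intertwining algebra, so does the composite.

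The main obstacle I anticipate is the infinite-dimensional bookkeeping in passing between $G$ and $U_\C$: unlike the $U \leftrightarrow U_\C$ stage, where smoothness forces finitely many isotypic components and everything is genuinely finite, a locally-finite representation of $G$ may have infinitely many inequivalent finite-dimensional constituents, so one must argue that the holomorphic extensions glue to a single well-defined operator $\pi(g)$ for each $g \in U_\C$ (this is where one uses that two finite-dimensional invariant subspaces are contained in a common finite-dimensional invariant subspace, on which the extension is unambiguous) and that the resulting map $U_\C \to \mathrm{GL}(\cH)$ is continuous and holomorphic in the sense of the definition given earlier (namely that its restriction to each $(U_n)_\C$ — or here, since $U_\C$ is a single complex group, simply that it is holomorphic — is holomorphic, which again reduces to the finite-dimensional invariant pieces). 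A secondary point requiring care is verifying that a continuous representation of $G$ which is locally finite is automatically smooth as a representation (so that Lemma~\ref{intertwiningDifferential} applies and the derived action makes sense), but this is standard since $G$-finite vectors are automatically smooth vectors for the finite-dimensional subrepresentation they generate.
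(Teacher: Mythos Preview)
Your approach is essentially the same as the paper's: extend the finite-dimensional Weyl trick to each finite-dimensional invariant subspace, use uniqueness of holomorphic extension to verify well-definedness on overlaps, use restriction for the reverse direction, and invoke Lemma~\ref{intertwiningDifferential} for the intertwining algebras. The only cosmetic difference is that you organize $(2)\leftrightarrow(3)$ via the finite isotypic decomposition from Theorem~\ref{smoothClassification}, whereas the paper treats $(2)\leftrightarrow(3)$ by the same vector-by-vector argument it uses for $(1)\leftrightarrow(2)$; your identification of the gluing/boundedness issue as the main technical point is accurate, and the paper handles it just as tersely.
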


\begin{proof}
We begin by reminding the reader that a representation of $U$ is smooth if and only if it is locally-finite (see Theorem~\ref{smoothClassification}) and that every holomorphic representation of $U_\C$ is locally-finite.

We will construct the correspondences $(1)\rightarrow (2)$ and $(2)\rightarrow (1)$.  The proofs for $(2)\rightarrow (3)$ and $(3)\rightarrow (2)$ are identical.

One passes from $(2)$ to $(1)$ quite easily: if $(\pi,\cH)$ is a locally-finite representation of $U_\C$, then it is clear that $\pi|_G$ is a locally-finite representation of $G$.  

To construct $(1)\rightarrow (2)$, we suppose that $(\pi,\cH)$ is a locally finite representation of $G$.  
We wish to construct a holomorphic representation $\pi_\C$ of $U_\C$ on $\cH$ such that $\pi_\C|_G = \pi$.  
First we notice that each vector $v\in\cH$ is contained in a finite-dimensional $G$-invariant subspace $W$.  Write $\pi^W$ for the subrepresentation of $\pi$ corresponding to  $W$. By the finite-dimensional Weyl Trick, we see that $\pi^W$ uniquely extends to a holomorphic representation $\pi^W_\C$ of $U_\C$ on $W$.
We define $\pi_\C(g)v = \pi^W_\C(g)v$ for each $v\in W$ and $g\in U_\C$.  If $V$ and $W$ are finite-dimensional invariant subspaces of $\cH$ and $v\in V\cap W$, then the uniqueness of the holomorphic extension shows that $\pi^W_\C(g)v = \pi^V_\C(g)v$ and thus $\pi_\C$ is well-defined and gives a locally-finite, holomorphic representation of $U_\C$.

That these one-to-one correspondences of representations preserve the algebra of intertwining operators follows from passing to the derived representation and using Lemma~\ref{intertwiningDifferential}.

\end{proof}

Our infinite-dimensional version of Weyl's Trick is then an immediate corollary (see \cite[Proposition 3.6]{NRW3} for a partial version of this result and a different proof):
\begin{corollary}
\label{GeneralizedWeylTrick}
Suppose that $G_\infty/K_\infty$ is a lim-noncompact Riemannian symmetric space which is the c-dual of a lim-compact symmetric space $U_\infty/K_\infty$ where $U_n/K_n$ and $U_n$ are simply-connected for each $n$.  Finally, let $\cH$ be a Hilbert space.  There are one-to-one correspondences between the following categories of representations on $\cH$ which preserve the algebras of intertwining operators:
\begin{enumerate}
\item Locally finite representations of $G_\infty$ on $\cH$
\item Holomorphic representations of $(U_\infty)_\C$ on $\cH$
\item Smooth representations of $U_\infty$ on $\cH$
\end{enumerate}

%Furthermore, a smooth representation of $U$ on $\cH$ is unitary if and only if the corresponding representation of $G$ on $\cH$ has the property that $\pi(g)^* = \pi(\theta(g^{-1}))$ for all $g\in G_\infty$.
\end{corollary}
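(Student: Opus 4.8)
The plan is to deduce Corollary~\ref{GeneralizedWeylTrick} directly from the finite-dimensional-level statement Theorem~\ref{WeylInfiniteTrick} by a level-by-level argument, using the compatibility of the direct systems $\{G_n\}$, $\{U_n\}$, $\{(U_n)_\C\}$ constructed in Section~\ref{basicLimRiemannian}. The key point is that a representation of a direct-limit group on $\cH$ restricting compatibly at each finite level determines, and is determined by, the collection of its restrictions, so each of the three categories is literally the inverse limit of the corresponding finite-level categories, to which Theorem~\ref{WeylInfiniteTrick} applies with $U = U_n$ and $G = G_n$ (noting $(U_n)_\C$ is a complexification of $G_n$ as established in Section~\ref{basicLimRiemannian}).

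Concretely, I would proceed as follows. First, record the trivial implication: restricting a holomorphic representation of $(U_\infty)_\C$ to $U_\infty$, and restricting that to $G_\infty$, visibly gives a smooth (resp. locally finite) representation, since smoothness and local finiteness are defined level-by-level (Definition~\ref{smoothnessDefinition}) and holomorphy of $\pi|_{(U_n)_\C}$ forces smoothness of $\pi|_{U_n}$ and hence of $\pi|_{G_n}$. Second, for the substantive direction, start with a locally finite representation $(\pi,\cH)$ of $G_\infty$. For each $n$, the restriction $\pi|_{G_n}$ is a locally finite representation of $G_n$, so by Theorem~\ref{WeylInfiniteTrick} it extends uniquely to a holomorphic representation $\pi_\C^{(n)}$ of $(U_n)_\C$ on $\cH$. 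The uniqueness clause in Theorem~\ref{WeylInfiniteTrick} is what makes the construction coherent: since $\pi_\C^{(n+1)}|_{G_n} = \pi|_{G_n}$, the representation $\pi_\C^{(n+1)}|_{(U_n)_\C}$ is a holomorphic extension of $\pi|_{G_n}$, hence must equal $\pi_\C^{(n)}$. Therefore the operators $\pi_\C^{(n)}(g)$ for $g \in (U_n)_\C$ patch together to a single assignment $\pi_\C : (U_\infty)_\C \to \GL(\cH)$, which is a group homomorphism (any two elements lie in a common $(U_n)_\C$) and is holomorphic on each $(U_n)_\C$, i.e. a holomorphic representation of $(U_\infty)_\C$. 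Restricting $\pi_\C$ to $U_\infty$ gives the smooth representation in category (3), and restricting back to $G_\infty$ recovers $\pi$. Third, check that the correspondences are mutually inverse — this is immediate from the uniqueness in Theorem~\ref{WeylInfiniteTrick} at each level — and that each correspondence lands in the claimed category, using that $\pi_\C|_{U_\infty}$ is smooth because each $\pi_\C|_{U_n}$ is (being a holomorphic representation of $(U_n)_\C$ restricted to $U_n$, cf. the corollary following Theorem~\ref{smoothClassification}).

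Finally, for the statement about intertwining operators: a bounded operator $T \in \cB(\cH)$ commutes with $\pi(G_\infty)$ iff it commutes with $\pi(G_n)$ for every $n$ (since $G_\infty = \bigcup_n G_n$), and similarly for $(U_\infty)_\C$ and $U_\infty$; so the equality of the three intertwiner algebras at the level of $G_\infty$, $(U_\infty)_\C$, $U_\infty$ follows by intersecting over $n$ the corresponding equalities at each finite level, which are part of Theorem~\ref{WeylInfiniteTrick}. In fact one can phrase the whole corollary as: each of the three functors commutes with the formation of inverse limits of the finite-level categories, and Theorem~\ref{WeylInfiniteTrick} gives the equivalence at each finite level.

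I expect the only real subtlety — and hence the step to state carefully rather than wave at — to be the coherence of the extensions $\pi_\C^{(n)}$, i.e. verifying that $\pi_\C^{(n+1)}|_{(U_n)_\C} = \pi_\C^{(n)}$; everything else is formal. This hinges entirely on the uniqueness assertion in Theorem~\ref{WeylInfiniteTrick}, together with the fact, established in Section~\ref{basicLimRiemannian}, that the inclusions $G_n \hookrightarrow G_{n+1}$ and $(U_n)_\C \hookrightarrow (U_{n+1})_\C$ are compatible (so that $(U_n)_\C$ really is ``the'' complexification of $G_n$ sitting inside $(U_{n+1})_\C$). There is no genuine analytic difficulty, since boundedness and holomorphy are inherited automatically from the finite levels; the content is the bookkeeping that the levelwise trick glues. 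The details are routine and are left to the reader.
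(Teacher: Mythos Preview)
Your proposal is correct and takes essentially the same approach as the paper's own proof, which simply states that the corollary follows immediately by applying Theorem~\ref{WeylInfiniteTrick} to representations of $G_n$, $(U_n)_\C$, and $U_n$ on $\cH$ for each $n\in\N$. You have supplied considerably more detail than the paper does---in particular, you make explicit the coherence step $\pi_\C^{(n+1)}|_{(U_n)_\C}=\pi_\C^{(n)}$ via uniqueness, and the intersection-over-$n$ argument for the intertwiner algebras---but the underlying idea is identical.
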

\begin{proof}
This corollary follows immediately by applying Theorem~\ref{WeylInfiniteTrick} to representations of $G_n$, $(U_n)_\C$, and $U_n$ on $\cH$ for each $n\in\N$.
\end{proof}

\subsection{Highest-Weight Representations}
\label{highestWeightRepSection}
Now suppose that $G_\infty/K_\infty$ is an admissible lim-noncompact symmetric space which is the c-dual of a lim-compact symmetric space $U_\infty/K_\infty$.  We wish to construct irreducible spherical and conical representations for $G_\infty/K_\infty$ and $U_\infty/K_\infty$.
 The most natural way to do this would be to construct a direct limit of spherical/conical representations.  The following lemma provides the foundation for this construction and is a generalization of a result proved by \'{O}lafsson and Wolf in \cite[Lemma~5.8]{OW2013}.

\begin{theorem}
\label{irreduciblerestriction}
Let $U_\infty/K_\infty$ be a propagated lim-compact symmetric space such that $U_n/K_n$ is simply connected for each $n\in\N$. Fix indices $n<m$ and dominant weights $\mu_n\in\Lambda^+(\gu_n,\ga_n)$ and $\mu_m\in\Lambda^+(\gu_m,\ga_m)$ such that $\mu_n|_{\ga_n} = \mu_m$.  Consider the irreducible spherical representations $(\pi_{\mu_m},\cH_{\mu_m})$ and $(\pi_{\mu_n},\cH_{\mu_n})$ of $U_m$ and $U_n$, respectively, with respective highest weights $\mu_m$ and $\mu_n$.  Let $w$ be a highest-weight vector for $\pi_{\mu_m}$.

Then the representation of $U_n$ on 
$W = \langle \pi_{\mu_m}(U_n) w \rangle$
is equivalent to $\pi_{\mu_n}$ (and, in particular, is irreducible).
\end{theorem}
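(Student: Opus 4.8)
The plan is to show that $w$ remains a highest-weight vector after restricting $\pi_{\mu_m}$ to $U_n$, that its $U_n$-highest weight is exactly the spherical weight $\mu_n$, and then to invoke the elementary fact that a finite-dimensional module generated by a highest-weight vector of dominant integral weight is automatically irreducible. To begin I would fix compatible structural data: by propagation one may choose $\theta_k$-stable Cartan subalgebras $\gh_k=\gt_k\oplus i\ga_k$ with $\gt_k=\gh_k\cap\gk_k$, together with positive systems $\Delta_k^+$ of the full root systems $\Delta((\gu_k)_\C,(\gh_k)_\C)$, so that $\gh_n\subseteq\gh_m$, $\gt_n\subseteq\gt_m$, and the positive nilradicals $\gn_k^+:=\bigoplus_{\gamma\in\Delta_k^+}(\gu_{k,\C})_\gamma$ satisfy $\gn_n^+\subseteq\gn_m^+$ whenever $n\leq m$ (this compatibility of the full root data for propagated limits is part of the package reviewed in Section~\ref{propagationSection}; cf.\ \cite{OW2013}). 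Relative to this data, the highest-weight vector $w$ of $\pi_{\mu_m}$ is a $(\gh_m)_\C$-weight vector whose weight is the unique weight vanishing on $\gt_m$ and restricting to $\mu_m$ on $i\ga_m$ (as the Cartan--Helgason Theorem allows), and one has $\gn_m^+\cdot w=0$.

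Next I would restrict to $U_n$. Since $\gh_n\subseteq\gh_m$, the vector $w$ is a $(\gh_n)_\C$-weight vector of weight $\nu:=\mu_m|_{(\gh_n)_\C}$, and since $\gn_n^+\subseteq\gn_m^+$ it is annihilated by $\gn_n^+$; hence $w$ is a highest-weight vector for $(\gu_n)_\C$, of highest weight $\nu$. I would then identify $\nu$: it vanishes on $\gt_n\subseteq\gt_m$ because $\mu_m$ does, and on $i\ga_n\subseteq i\ga_m$ it agrees with the restriction of $\mu_m$, which by the compatibility hypothesis on the pair $(\mu_n,\mu_m)$ is $\mu_n$. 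Being the highest weight of a finite-dimensional representation, $\nu$ is dominant integral, and by the Cartan--Helgason Theorem the irreducible representation $\pi_\nu$ of $U_n$ with highest weight $\nu$ is precisely the spherical representation $\pi_{\mu_n}$.

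Finally, since $U_n$ is connected, $W=\langle\pi_{\mu_m}(U_n)w\rangle$ is the $(\gu_n)_\C$-submodule of $\pi_{\mu_m}$ generated by $w$, hence a finite-dimensional module generated by a highest-weight vector of dominant integral weight $\nu$. To conclude it is irreducible I would decompose $W=\bigoplus_j W_j$ into $U_n$-irreducibles and write $w=\sum_j w_j$ accordingly; projection onto $W_j$ along the complementary submodule is $(\gu_n)_\C$-equivariant, so each nonzero $w_j$ is again a weight vector of weight $\nu$ killed by $\gn_n^+$, hence a highest-weight vector, which forces $W_j\cong\pi_\nu$ whenever $w_j\neq 0$. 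Then the $(\gu_n)_\C$-equivariant map $\pi_\nu\to\pi_{\mu_m}$ determined by sending a highest-weight vector of $\pi_\nu$ to $w$ is well defined (componentwise in each $W_j$ a highest-weight vector of $\pi_\nu$ maps to a scalar multiple of $w_j$), its image is $W$, and it is nonzero, so by irreducibility of $\pi_\nu$ it is injective; therefore $W\cong\pi_\nu=\pi_{\mu_n}$, as claimed.

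I expect the main obstacle to be the first step. The propagation hypothesis is phrased for the restricted root systems $\Sigma_k$, whereas the argument above uses the analogous compatibility $\gn_n^+\subseteq\gn_m^+$ for the full root systems $\Delta_k$; one must therefore either recall from the structure theory of propagated limits that a compatible system of $\theta$-stable Cartan subalgebras and positive systems can be chosen, or else rerun the argument with the restricted-root nilradicals $\gn_k$ (using that $\gn_k\subseteq\gn_m$ is already established) together with the $M_k$-fixed-point description of spherical highest-weight vectors coming from the Cartan--Helgason Theorem. Once this compatibility is in hand, the remaining steps are routine highest-weight theory.
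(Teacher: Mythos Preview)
Your argument is correct, but it follows a genuinely different route from the paper's. The paper never touches the full Cartan subalgebras $\gh_k$ or the full nilradicals $\gn_k^+$; instead it runs exactly your fallback option (b). Concretely, it uses that the highest-weight vector $w$ of $\pi_{\mu_m}$ is $M_mN_m$-fixed (Cartan--Helgason), hence $M_nN_n$-fixed via $M_nN_n\leq M_mN_m$, then decomposes $w=\sum_\nu w_\nu$ into $U_n$-isotypic pieces and applies Lemma~\ref{irreducible_primary} to conclude that each nonzero $w_\nu$ is a highest-weight vector of weight $\nu$. Since $w$ is also an $\ga_n$-weight vector of weight $\mu_m|_{\ga_n}=\mu_n$, only the component $w_{\mu_n}$ survives, so $W=\langle\pi_{\mu_m}(U_n)w_{\mu_n}\rangle$ is irreducible of type $\mu_n$.

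The trade-off is exactly the one you anticipated. Your primary approach is cleaner once the compatibility $\gh_n\subseteq\gh_m$, $\gn_n^+\subseteq\gn_m^+$ of the full root data is in hand --- after that it is pure highest-weight theory with no appeal to Lemma~\ref{irreducible_primary} or to the $M$-groups. But that compatibility is not literally part of the restricted-root propagation hypothesis stated in Section~\ref{propagationSection} (it requires in particular $\gt_n\subseteq\gt_m$, which amounts to nesting Cartan subalgebras of the $\gm_k$'s, i.e.\ essentially admissibility again), so it would need to be set up separately. The paper sidesteps that setup by staying inside the restricted-root framework, at the price of invoking $M_nN_n\leq M_mN_m$ and the auxiliary Lemma~\ref{irreducible_primary}. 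Your irreducibility endgame (decompose $W$, identify each summand as $\pi_\nu$, then build the surjective map $\pi_\nu\to W$) is a valid substitute for that lemma and is the more elementary of the two.
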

\begin{proof}
We consider the action of $U_n$ on $W$.  For each dominant weight $\nu$ in $\Lambda^+(\gg_n,\ga_n)$, let $w_\nu$ be the orthogonal projection of $w$ onto the space of $\nu$-isotypic vectors.  Then $w = \sum_{\nu} w_\nu$ (note that $w_\nu = 0$ for all but finitely many choices of $\nu$).
We also write $W_\nu = \langle \pi_{\mu_m}(U_n) w_\nu \rangle$ for each $\nu \in \Lambda^+(\gg_n,\ga_n)$.  %Because $W_\nu$ consists of $U_n$-isotypic vectors of type $\nu$, we see that the action of $U_n$ on $W_\nu$ is $U_n$-isomorphic to a direct sum of copies of the irreducible representation $(\pi_\nu,\cH_\nu)$ with highest-weight $\nu$.  

Since $w$ is a $U_m$-highest-weight vector for $\pi_{\mu_m}$, we have that $\pi(M_mN_m)w = w$, and in particular, $\pi(M_nN_n)w = w$.  Since the space of isotypic vectors in $W$ of type $\mu_n$ is invariant under $G_n$, it follows that $w_\nu$ is fixed under $M_nN_n$ for each $\nu \in \Lambda^+(\gg_n,\ga_n)$.  Thus Lemma~\ref{irreducible_primary} shows that if $w_\nu\neq 0$, then $W_\nu$ is a $U_n$-irreducible subspace of $W$ and that $w_\nu$ is a highest-weight vector for $W_\nu$.  In particular, $w_\nu$ is a weight vector of weight~$\nu$.

On the other hand, since $w$ is a $U_m$-weight vector of weight $\mu_m$, it follows that it is a $U_n$-weight vector of weight $\mu_n = \mu_m|_{\ga_n}$.  But we also have that $w = \sum_\nu w_\nu$, where each $w_\nu$ is a weight vector of weight $\nu$.  Hence $w = w_{\mu_n}$ and $W = W_{\mu_n}$, and so we are done.  
\end{proof}

We follow \cite[p. 464--466]{W2011} for the construction of highest-weight representations.
For each $n$, we denote the set of fundamental weights by  $\xi_{n,1},\ldots\xi_{n,{r_n}}$, where $r_n = \dim \ga_n$ and where we have numbered the fundamental weights according to the roots as in Section~\ref{propagationSection}.  Suppose $k\leq n$.  One can show that 
\begin{equation}
\label{fundamentalWeightRestriction}
\xi_{n,i}|_{\ga_k} = \xi_{k,i}
\end{equation}
 for all $n\in\N$ and $i\leq r_k$.  Furthermore, one can check that $\xi_{n,i}|_{\ga_k} = 0$ for $r_k < i\leq r_n$.  Thus 
\begin{equation}
\label{dominantWeightsReprise}
\Lambda^+(\gg_n,\ga_n)=\Z^+\xi_{n,1}+\cdots + \Z^+\xi_{n,r_n}=\left\{\left. \sum_{j=1}^{r_n} c_j\xi_{n,j}\bmid c_j\in \Z^+\right\},
\end{equation}
 and 
\begin{equation}
\label{dominantWeightRestriction}
   \left.\left(\sum_{j=1}^{r_n} c_j\xi_{n,j}\right)\right|_{\ga_k} = \left(\sum_{j=1}^{r_k} c_j\xi_{k,j}\right) \in \Lambda^+(\gg_k,\ga_k)
\end{equation}
whenever $k\leq n$.

  We can thus form a projective limit
\[
\Lambda^+ \equiv \Lambda^+(\gg_\infty,\ga_\infty) = \varprojlim \Lambda^+(\gg_n,\ga_n).
\]
We say that $\Lambda^+(\gg_\infty,\ga_\infty)$ is the set of \textbf{dominant integral weights} for the restricted root system $\Sigma(\gg_\infty,\ga_\infty)$.  That is, $\Lambda^+$ consists of the elements $\lambda$ of $\ga_\infty^* = \varprojlim \ga_n^*$ such that $\lambda|_{\ga_n}$ is dominant and integral for every $n$.  Notice that (\ref{fundamentalWeightRestriction}) implies that
for each $i\in\N$ there is a weight $\xi_i\in\ga_\infty^*$ such that $\xi_i|_{\ga_n} = \xi_{n,i}$ for each $n\in\N$.  

Just as in the finite-dimensional case, weights in $\Lambda^+$ can be used to create highest-weight representations of $U_\infty$.  To see this, fix $\mu\in\Lambda^+$.  For $n$ in $\N$, let $(\pi_{\mu_n},\cH_{\mu_n})$ be the irreducible representation of $U_n$ with highest weight $\mu_n \equiv \mu|_{\ga_n}$, and let $v_n \in \cH_{\mu_n}$ be a nonzero highest-weight vector.  By Theorem~\ref{irreduciblerestriction}, we see that $\pi_{\mu_n}$ may be embedded unitarily into $\pi_{\mu_{n+1}}$ by identifying the respective highest-weight vectors $v_n$ with $v_{n+1}$.  The corresponding unitary representation of $U_\infty$ constructed by the direct limit of $\pi_{\mu_n}$, $n\in\N$ is denoted by
\[
(\pi_\mu, \cH_\mu) = \left(\varinjlim \pi_{\mu_n}, \overline{\varinjlim \cH_{\mu_n}} \right),
\]
where $\cH_\mu = \overline{\varinjlim \cH_{\mu_n}}$ is the Hilbert completion of the algebraic direct limit $\varinjlim \cH_{\mu_n}$  of Hilbert spaces.
  We refer to $\pi_\mu$ as the \textbf{highest-weight representation with highest weight $\mu$}.  Note that a direct limit of irreducible representations of~$U_n$ is an irreducible representation of~$U_\infty$ by~\ref{irreducibleLimit}.

If $\dim \ga_\infty = \infty$, then we can write elements of $\ga^*$ as sequences $(a_i)\in\Z$  of integers, so that a sequence $(a_i)\in\Z$  corresponds to the formal sum $\sum_{i\in\N} a_i e_i \in \ga_\infty^*$.  We now use this notation to write down the fundamental weights for $\Sigma(\gg_\infty,\ga_\infty)$ for some infinite Dynkin-diagram types.

If $\Sigma(\gg_\infty,\ga_\infty)$ has type $A_\infty$, then
\[
\xi_i = (0,\ldots,0,2,2,2,\ldots)
\]
where the first $i$ entries in $\xi_i$ are zeros.  

If $\Sigma(\gg_\infty,\ga_\infty)$ has type $B_\infty$, then
\[
\xi_1 = (1,1,1,\ldots) \text{ and } \xi_i = (0,\ldots,0,2,2,2,\ldots) \text{ for } i>1,
\]
where the first $i-1$ entries in $\xi_i$ are zero for $i>1$.

If $\Sigma(\gg_\infty,\ga_\infty)$ has type $C_\infty$, then
\[
\xi_i = (0,\ldots,0,2,2,2,\ldots),
\]
where the first $i-1$ entries in $\xi_i$ are zero.

If $\Sigma(\gg_\infty,\ga_\infty)$ has type $D_\infty$, then
\[
\xi_1 = (1,1,1,\ldots), \xi_2 = (-1,1,1,\ldots) \text{ and } \xi_i = (0,\ldots,0,2,2,2,\ldots) \text{ for } i\geq 3,
\]
where the first $i-1$ entries in $\xi_i$ are zero for $i\geq 3$.

By examining the fundamental weights in each case and extending them to weights on $\gh_\infty$, it follows from the boundedness condition in Theorem~\ref{bounded_coefficients} that a highest-weight representation $(\pi_\mu,\cH_\mu)$ for $\lambda\in\Lambda^+(\gg_\infty,\ga_\infty)$ will be smooth if and only if we can write $\lambda$ as a finite linear combination 
\[
\lambda= \sum_{i=1}^n c_i \xi_i,
\] 
where $c_i\in\N$ for each $n$.  In particular, if $\dim \ga_\infty < \infty$, then every highest-weight representation  $(\pi_\mu,\cH_\mu)$ for $\lambda\in\Lambda^+(\gg_\infty,\ga_\infty)$ is smooth.

\section{Conical Representations for Admissible Direct Limits}

In this section, we give a natural definition for conical representations of admissible lim-noncompact symmetric spaces $G_\infty/K_\infty$.  
As before, we assume that $G_\infty/K_\infty$ is the c-dual of a propagated lim-compact symmetric space $U_\infty/K_\infty$.  By using the generalization of Weyl's Unitary Trick from the previous chapter, each smooth cyclic representation of $U_\infty$ gives rise to a smooth cyclic representation of $G_\infty$, and it is natural to say that a smooth cyclic  representation of $U_\infty$ is conical if the corresponding representation of $G_\infty$ is conical.  

In fact, we will see that in some cases it is possible to define nonsmooth unitary representations of $U_\infty$ which are conical but do not correspond to continuous Hilbert representations of $G_\infty$.  This is a strange situation which does not occur in the finite-dimensional case.

With these definitions, we classify all of the irreducible cyclic unitary representations of $U_\infty$ which are conical.  
Next we see that smooth conical unitary representations of $U_\infty$ decompose into a discrete direct sum of highest-weight representations.  

It will follow from this classification, together with \cite[Theorem 4.5]{DOW}, that if $\Rank\ U_\infty/K_\infty = \infty$, then there are no smooth unitary representations of $U_\infty$ which are both spherical and conical.  On the other hand, if $\Rank\ U_\infty/K_\infty < \infty$, then we will see that a smooth irreducible unitary representation of $U_\infty$ is spherical if and only if it is conical.  This situation is also in stark contrast to the situation for finite-dimensional symmetric spaces, for which finite-dimensional representations are spherical if and only if they are conical.

\subsection{Definition of Conical Representations}
We begin by presenting our definition of conical representations for lim-Riemannian symmetric spaces.  Let $G_\infty/K_\infty$ be the c-dual of a propagated lim-compact symmetric space $U_\infty/K_\infty$ such that $U_n/K_n$ and $U_n$ are simply-connected for each $n$ and assume that $G_\infty/K_\infty$ is admissible. 

For finite-dimensional symmetric spaces, it is possible to consider a finite-dimen\-sion\-al conical representation to be a representation of either $G$ or $U$ (where $G/K$ is the c-dual of the compact symmetric space $U/K$).  On the one hand, many harmonic analysis applications of conical representations appear on the horocycle space $G/MN$, so in a certain sense it is most natural to speak of conical representations of $G$.  On the other hand, these representations are only unitary if we move to the compact group $U$.   

Similarly, because unitarity is crucially important in the arguments which follow, we will mainly consider unitary conical representations of $U_\infty$.  However, it is important to remember that smooth representations of $U_\infty$ correspond to locally-finite representations of $G_\infty$ under Theorem~\ref{GeneralizedWeylTrick}, and vice versa.

We are now ready to present the definition:

\begin{definition}
A unitary representation $(\pi,\cH)$ of $U_\infty$ is {\bf conical} if there is a nonzero cyclic vector $v\in\cH^\mathrm{fin}$ such that $\pi(M_n N_n)v = v$ for all $n\in\N$. In that case, we say that $v$ is a {\bf conical vector} for $\pi$.
\end{definition}

\noindent \textbf{Remark: } Even though $\pi$ is a representation of $U_\infty$ in the above definition and $M_n N_n$ is a subgroup of the c-dual group $G_\infty$, the fact that $v$ is locally-finite implies that each $\pi|_{U_n}$ extends analytically to a representation of $G_n$ on the finite-dimensional space $\langle \pi(U_n) v\rangle$, and so the condition that $\pi(M_n N_n)v = v$ makes sense.

Notice also that we do not require that conical representations of $U_\infty$  be smooth.  This opens the door to the aforementioned possibility of constructing conical representations of $U_\infty$ which do not correspond to representations of $G_\infty$ under the generalized unitary trick, and indeed we will construct many examples of such representations in Section~\ref{disintegrationSection}.

\subsection{Classification of Conical Representations}
We now begin to classify the unitary conical representations of $U_\infty$.  We determine which representations are irreducible and show how conical representations decompose into subrepresentations.

\begin{theorem}
\label{irreduciblehighestweightprop}
Suppose that $U_\infty/K_\infty$ is a propagated lim-compact symmetric space with $U_n$ and $U_n/K_n$ simply-connected for each $n$ and such that the c-dual $G_\infty/K_\infty$ is admissible. Suppose further that $(\pi, \cH)$ is a conical representation with a conical vector $v$.  For each $n$, write $\Gamma_n(\pi,v)$ for the set of highest weights $\mu$ in $\Lambda^+(\gu_n,\ga_n)$ such that the projection $v_\mu = \operatorname{pr}_{\cH_{\mu}} v$ of $v$ onto the space of $U_n$-isotypic vectors of type $\mu$ is nonzero. Then
\begin{enumerate}
\item For each $n\in\N$ and $\mu \in \Gamma_n(\pi,v)$, the action of $U_\infty$ on
      $
           \overline{\langle \pi(U_\infty) v_\mu \rangle}
      $
      gives a conical representation of $U_\infty$ with conical vector $v_\mu$.
\item $\pi$ decomposes into an orthogonal direct sum of disjoint conical representations as follows:
      \[
           \cH = \overline{\langle \pi(U_\infty) v \rangle} = \bigoplus_{\mu\in\Gamma_n(\pi,v)} \overline{\langle \pi(U_\infty) v_\mu \rangle}
      \]
      
\item If $\pi$ is irreducible, then $\pi$ is equivalent to a highest-weight representation $\pi_\mu$ for some $\mu \in \Lambda^+(\gg_\infty, \ga_\infty)$.
\item If $\pi$ is irreducible, then $\dim \cH^{M_\infty N_\infty} = 1$.
\end{enumerate}  

\end{theorem}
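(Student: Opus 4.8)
My plan is to prove (1)--(4) in the order stated, obtaining (4) by applying the structure produced in (1)--(3) to the irreducible representation itself.

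For part (1), fix $n$ and $\mu\in\Gamma_n(\pi,v)$. Since $v$ is locally finite, $\langle\pi(U_n)v\rangle$ is finite-dimensional and $\pi|_{U_n}$ extends analytically to $G_n$ on it; the $U_n$-isotypic subspace of type $\mu$ is then $G_n$-invariant, so $v_\mu=\operatorname{pr}_{\cH_\mu}v$ is fixed by $M_nN_n$ and generates a finite-dimensional primary $U_n$-representation of type $\mu$, whence by Lemma~\ref{irreducible_primary} $v_\mu$ is a $U_n$-highest-weight vector of weight $\mu$. The substance of (1) is that $v_\mu$ is fixed by $M_kN_k$ for \emph{every} $k$. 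For $k\le n$ this is immediate from admissibility, since $M_kN_k\le M_nN_n$. For $k>n$, decompose the conical vector $v$ into $U_k$-isotypic components $v=\sum_\sigma v^{(k)}_\sigma$; as above, each nonzero $v^{(k)}_\sigma$ is fixed by $M_kN_k$ and is a $U_k$-highest-weight vector of a spherical weight $\sigma\in\Lambda^+(\gg_k,\ga_k)$. By Theorem~\ref{irreduciblerestriction}, $\langle\pi(U_n)v^{(k)}_\sigma\rangle\cong\pi_{\sigma|_{\ga_n}}$ with $v^{(k)}_\sigma$ as highest-weight vector, so $v^{(k)}_\sigma$ lies in the $U_n$-isotypic subspace of type $\sigma|_{\ga_n}$. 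Hence $v_\mu=\sum_{\sigma|_{\ga_n}=\mu}v^{(k)}_\sigma$, a finite sum of $M_kN_k$-fixed (and locally finite) vectors, so $v_\mu$ is $M_kN_k$-fixed; being nonzero and cyclic for $\overline{\langle\pi(U_\infty)v_\mu\rangle}$ by construction, $v_\mu$ is a conical vector for that subrepresentation.

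For part (2), write $V_\mu=\overline{\langle\pi(U_\infty)v_\mu\rangle}$. For $k\ge n$, Lemma~\ref{splittinglemma} applied to the decomposition above gives $\langle\pi(U_k)v_\mu\rangle=\bigoplus_{\sigma|_{\ga_n}=\mu}\langle\pi(U_k)v^{(k)}_\sigma\rangle$, a sum of irreducibles with pairwise distinct highest weights; since for $\mu\ne\mu'$ in $\Gamma_n(\pi,v)$ the index sets $\{\sigma:\sigma|_{\ga_n}=\mu\}$ and $\{\sigma:\sigma|_{\ga_n}=\mu'\}$ are disjoint, $\langle\pi(U_k)v_\mu\rangle\perp\langle\pi(U_k)v_{\mu'}\rangle$, and letting $k\to\infty$ yields $V_\mu\perp V_{\mu'}$. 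Then $\bigoplus_{\mu\in\Gamma_n(\pi,v)}V_\mu$ is a closed $U_\infty$-invariant subspace (the index set is finite, as $\langle\pi(U_n)v\rangle$ is finite-dimensional) containing the cyclic vector $v=\sum_\mu v_\mu$, hence equals $\overline{\langle\pi(U_\infty)v\rangle}=\cH$; this is (2). For part (3), if $\pi$ is irreducible then (2) forces $|\Gamma_n(\pi,v)|=1$ for every $n$; writing $\mu_n$ for its unique element, $v$ generates a primary $U_n$-representation of type $\mu_n$, so $v$ is a $U_n$-highest-weight vector of weight $\mu_n$ with $\langle\pi(U_n)v\rangle\cong\pi_{\mu_n}$, and $\mu_m|_{\ga_n}=\mu_n$ for $m\ge n$ because $v$ is also a $U_m$-weight vector of weight $\mu_m$. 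Thus $(\mu_n)_n$ defines $\mu\in\varprojlim\Lambda^+(\gg_n,\ga_n)=\Lambda^+(\gg_\infty,\ga_\infty)$, and $\pi=\overline{\varinjlim\langle\pi(U_n)v\rangle}$ is, by Theorem~\ref{irreduciblerestriction}, exactly the direct-limit realization of $\pi_\mu$ with $v$ the coherent highest-weight vector.

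For part (4), let $w\in\cH^{M_\infty N_\infty}\setminus\{0\}$. It is locally finite and $M_nN_n$-fixed for all $n$, and cyclic since $\pi\cong\pi_\mu$ is irreducible (Theorem~\ref{irreducibleLimit}), so $w$ is a conical vector for $\pi$. Running the argument of parts (2)--(3) with $w$ in place of $v$, irreducibility forces $|\Gamma_n(\pi,w)|=1$, so $w$ is a $U_n$-highest-weight vector of some spherical weight $\nu_n\in\Lambda^+(\gg_n,\ga_n)$ and lies entirely in the $U_n$-isotypic subspace of type $\nu_n$, for every $n$. Now keep the original $v$ and set $V_n=\langle\pi(U_n)v\rangle\cong\pi_{\mu_n}$; these increase with $n$ and $\overline{\bigcup_nV_n}=\cH$, so the orthogonal projections $P_n$ of $\cH$ onto $V_n$ converge strongly to the identity, and each $P_n$ is $U_n$-equivariant. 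If $\nu_n\ne\mu_n$, then $V_n$, being $U_n$-isotypic of type $\mu_n$, has trivial $\nu_n$-isotypic component, so $P_nw=0$; if $\nu_n=\mu_n$, then $P_nw$ lies in the $\mu_n$-weight space of $V_n\cong\pi_{\mu_n}$, which is one-dimensional and spanned by $v$ (as $\mu_n$ is the highest weight), so $P_nw\in\C v$. In either case $P_nw\in\C v$, and since $P_nw\to w$ and $\C v$ is closed, $w\in\C v$. Hence $\cH^{M_\infty N_\infty}\subseteq\C v$; as $v$ is a nonzero element of $\cH^{M_\infty N_\infty}$, $\dim\cH^{M_\infty N_\infty}=1$. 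The main obstacle I anticipate is the bookkeeping in part (1) --- promoting ``$v_\mu$ is $M_nN_n$-fixed'' to ``$v_\mu$ is $M_kN_k$-fixed for all $k$'' --- since this is precisely where propagation and Theorem~\ref{irreduciblerestriction} are indispensable, forcing a careful comparison of the $U_n$- and $U_k$-isotypic decompositions of $v$; a secondary point on which part (4) hinges is the observation that one need \emph{not} show $\nu_n=\mu_n$, because $P_nw$ falls into $\C v$ either way. Routine matters still to verify include that isotypic projections of a locally finite conical vector remain locally finite, that the index sets $\Gamma_n(\pi,v)$ are finite so the displayed direct sums make sense, and that $P_n\to\mathrm{id}$ strongly because the $V_n$ form an increasing family with dense union.
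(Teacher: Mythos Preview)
Your proof of parts (1)--(3) follows essentially the same route as the paper: decompose $v$ into $U_k$-isotypic pieces $v^{(k)}_\sigma$, use Lemma~\ref{irreducible_primary} to identify each as a highest-weight vector, use Theorem~\ref{irreduciblerestriction} to compare the $U_n$- and $U_k$-decompositions and conclude $v_\mu=\sum_{\sigma|_{\ga_n}=\mu}v^{(k)}_\sigma$, and use Lemma~\ref{splittinglemma} for the orthogonality in (2). The paper organizes the bookkeeping slightly differently (it names the regrouped sum $u_\lambda$ and argues $u_\lambda=v_\lambda$ by uniqueness of isotypic decomposition), but the substance is the same.

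Your argument for (4) is genuinely different. The paper argues by contradiction: it takes two orthogonal conical vectors $v\perp w$, observes that both generate $U_n$-irreducibles $V_n,W_n$ of the same type $\pi_{\mu_n}$, and invokes Lemma~\ref{irreducible_primary} to get $V_n\perp W_n$ for all $n$, contradicting irreducibility. That route is short but leaves implicit why $w$ must have the \emph{same} highest weight $\mu_n$ as $v$ (one has to note separately that if $\nu_n\neq\mu_n$ then $V_n$ and $W_n$ lie in orthogonal isotypic components anyway). Your approach is direct: project $w$ onto the increasing spaces $V_n=\langle\pi(U_n)v\rangle$ via the $U_n$-equivariant projections $P_n$ and use that $P_n w$ lands in the one-dimensional highest weight line $\C v$ whether or not $\nu_n=\mu_n$, then pass to the limit. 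This avoids the case split in the paper's version and does not rely on Lemma~\ref{irreducible_primary} a second time; the price is the small extra verification that $P_n\to\mathrm{id}$ strongly and that $P_n$ preserves weight spaces (both routine, as you note). Either argument is fine; yours is a bit more self-contained.
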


\begin{proof}
   For each $n\in\N$, the set $\Gamma_n(\pi,v)$ is finite because $v$ is $U_n$-finite for all $n$.  Then the decomposition of $v$ into $U_n$-isotypic vectors may be written \[v = \sum_{\mu\in\Gamma_n(\pi,v)} v_\mu,\] where $v_\mu = \operatorname{pr}_{\cH_{\mu}} v$.  Since each isotypic subspace is $U_n$-invariant, it follows that $v_\mu\in\cH^{M_n N_n}$ for each $\mu\in\Gamma_n(\pi,v)$.  Note that $\langle\pi(U_n)v_\mu\rangle$ gives a primary representation of $U_n$ of type $\mu$.  Hence, by Lemma~\ref{irreducible_primary}, it is an irreducible representation with highest-weight vector $v_\mu$.

   We repeat the same process for $U_{n+1}$, writing the decomposition of $v$ into $U_{n+1}$-isotypic vectors as
   \begin{equation} 
   \label{isotypic}
   v = \sum_{\lambda\in\Gamma_{n+1}(\pi,v)} v_\lambda 
   \end{equation}
   By Theorem~\ref{irreduciblerestriction} it follows for each $\lambda\in\Gamma_{n+1}(\pi,v)$ that $\langle \pi(U_n) v_\lambda \rangle$ is a $U_n$-irreducible subspace for which $v_\lambda$ is a highest-weight vector of weight $\lambda|_{\gh_n}$.  In other words, $v_\lambda$ is also a $U_n$-isotypic vector, so $\lambda|_{\gh_n} \in \Gamma_n(\pi,v)$.  Furthermore, since (\ref{isotypic}) is a decomposition of $v$ into $U_n$- and $U_{n+1}$-isotypic vectors, we see that for each $\mu\in\Gamma_n(\pi,v)$ there is $\lambda\in\Gamma_{n+1}(\pi,v)$ such that $\lambda|_{\gh_n} = \mu$. 
   
    In other words, if we consider all the highest weights of irreducible subrepresentations $\pi(U_n)$ and allow $n\in\N$ to vary, then the highest weights may be naturally arranged into a tree, as in Figure~\ref{weighttree}.

  % To illustrate the ``splitting'' behavior just described, we can arrange the $\mu_i^n$'s into a tree structure.  The weights $\mu^n_i$ for each $i$ and $n$ are the nodes of the tree.  We draw an arrow from $\mu^n_i$ to $\mu^{n+1}_j$ if $\mu^{n+1}_j |_{\ga_n} = \mu^n_i$.  Hence every node has exactly one parent node and at least one child node, which gives the structure of an infinite tree of weights. %In Figure~\ref{weighttree}, we show a typical highest-weight tree.

   \begin{figure}
      \caption{Example of a highest-weight tree} 
      \label{weighttree}
      \centerline{
         \xymatrix@R=1em{
            \Gamma_1(\pi,v)\ar@{}[d]|<<{||}             &  \Gamma_2(\pi,v)\ar@{}[d]|<<{||}            &  \Gamma_3(\pi,v)  \ar@{}[d]|<<{||}  &  \ar@{}[d]_{}="b"  \ldots \\
                                                        &                                             &  \mu^3_1         \ar[r]            &  \ar@{}[d]_{}="c"  \ldots \\
                                                        &  \mu^2_1 \ar[r]\ar[ur]                      &  \mu^3_2         \ar[r]            &  \ar@{}[d]_{}="d"  \ldots \\
            \mu^1_1\ar[ur]\ar[r]\ar[ddr]                &  \mu^2_2 \ar[r]                           &  \mu^3_3         \ar[r]            &  \ar@{}[d]_{}="e"  \ldots \\
                                                        &                                             &  \mu^3_4         \ar[r]            &  \ar@{}[d]_{}="f"  \ldots \\
                                                        &  \mu^2_3 \ar[ur]\ar[r]\ar[dr]               &  \mu^3_5         \ar[r]            &  \ar@{}[d]_{}="g"  \ldots \\
                                                        &                                             &  \mu^3_6         \ar[r]            &  \ar@{}[d]_{}="h"  \ldots \\      
            \mu_2^1\ar[r]                               &  \mu^2_4\ar[r]                              &  \mu^3_7         \ar[r]            &  \ldots 
           \ar"3,3";"c" \ar"5,3";"e" \ar"5,3";"f" 
         }
      }
   \end{figure}

   Next we prove (1).  First note that $V_\lambda = \langle\pi(U_\infty)v_\lambda\rangle$ is a $U_\infty$-invariant subspace of $\cH$ for each $\lambda\in\Gamma_n(\pi,v)$.    
   Suppose $m>n$, and write 
\[
u_\lambda = \sum_{\nu\in\Gamma_m(\pi,v) \text{ s.t. }\nu|_{\ga_n} = \lambda} v_\nu
\] 
for each $\lambda\in\Gamma_n(\pi,v)$.  
   Then $u_\lambda$ is a $U_n$-isotypic vector of type $\lambda$.  Because $v = \sum_{\nu\in\Gamma_m(\pi,v)} v_\nu$, we see that $v =  \sum_{\lambda\in\Gamma_n(\pi,v)} u_\lambda$ since every $U_m$-highest-weight vector $v_\nu$ appears as a summand in exactly one $u_\lambda$.  
   Since $v = \sum_{\lambda\in\Gamma_n(\pi,v)} v_\lambda$ is also a decomposition of $v$ into $U_n$-isotypic vectors, it follows that $v_\lambda = u_\lambda$ for each $\lambda\in\Gamma_n(\pi,v)$.  In particular, $v_\lambda$ is $M_m N_m$-invariant for all $m\geq n$.  It follows that $V_\lambda = \overline{\langle \pi(U_\infty) v_\lambda \rangle}$ gives a conical representation of $U_\infty$, proving (1).
   
 To prove (2), we need to show that $V_{\mu_1} \perp V_{\mu_2}$ for all $\mu_1\neq\mu_2$ in $\Gamma_n(\pi,v)$.
   It is sufficient to show that $V_{\mu_1}^m = \langle\pi(U_m)v_{\mu_1}\rangle$ and $V_{\mu_2}^m = \langle\pi(U_m)v_{\mu_2}\rangle$ are orthogonal for all $m$.     
     We apply Lemma~\ref{splittinglemma} to see that 
\[
   \langle\pi(U_m)v_\lambda\rangle = \bigoplus_{\nu\in \Gamma_m(\pi,v)\text{ s.t }\nu | \ga_n = \lambda} \langle\pi(U_m)v_\nu\rangle. 
\]  
It follows that $\langle\pi(U_m)v_{\mu_1}\rangle$ and $\langle\pi(U_m)v_{\mu_2}\rangle$ are orthogonal for all $m$ and hence that $V = \bigcup_m \langle\pi(U_m)v_{\mu_1}\rangle$ and $W = \bigcup_m \langle\pi(U_m)v_{\mu_2}\rangle$ are orthogonal $G$-invariant subspaces of $\cH$, proving (2).  Figure~\ref{weightreptree} demonstrates how the decomposition of $U_m$-representations matches the tree structure of the highest weights that was exhibited in Figure~\ref{weighttree}.

To prove (3), we assume that $\pi$ is irreducible. Suppose that there is $n$ such that $\#{\Gamma_n(\pi,v)}>1$ (that is, there is more than one $U_m$-highest weight in $\pi|_{U_m}$). Then (2) produces orthogonal, nonzero invariant subspaces of $\cH$, which contradicts the assumption that $\pi$ is irreducible.  Hence $\#{\Gamma_n(\pi,v)}=1$ for all $m$.
   
   For each $n$, let $\mu_n$ refer to the single element of $\Gamma_n(\pi,v)$.  From this it follows that $v$ is a $U_m$-highest-weight vector of weight $\mu_m$ for each $m$ with the property that $\mu_m |_{\ga_n} = \mu_n$ for $m\geq n$.  Furthermore, $V_n = \langle \pi(U_n)v \rangle$ is a $U_n$-irreducible subspace of $\cH$ for each $n$, and we can write $\displaystyle \pi = \varinjlim \pi_n$, where $\pi_n$ is the representation of $U_n$ on $V_n$ induced by~$\pi$.  Thus $\pi$ is a highest-weight representation and (3) is proved.
   
   To prove that $\dim \cH^{M_\infty N_\infty}=1$, suppose that  $v$ and $w$ are nonzero conical vectors for $\pi$ such that $v\perp w$.  Write $V_n = \langle \pi(U_n)v\rangle$ and $W_n = \langle \pi(U_n) w\rangle$ for each $n$.  We see that $V_n$ and $W_n$ are both equivalent to $\pi_{\mu_n}$ and have $v$ and $w$ as respective highest-weight vectors.  By Lemma~\ref{irreducible_primary}, it follows that $V_n\perp W_n$ for each $n$, contradicting the irreducibility of $\pi$.   
  
   \begin{figure}
      \label{weightreptree}
      \caption{Example of a decomposition of $\langle\pi(U_n)v\rangle$ into $U_n$-isotypic subspaces (direct sums are taken vertically)}
      \centerline{
         \xymatrix@R=1em{
            \langle\pi(U_1)v\rangle \ar@{}[d]|<<{||}                                   &  \langle\pi(U_2)v\rangle  \ar@{}[d]|<<{||}                              &  \langle\pi(U_3)v\rangle   \ar@{}[d]|<<{||}                             &  \ldots \\
                                                                                       &                                                                          &  \langle\pi(U_3)v^3_1\rangle\ar@{}[d]|{\bigoplus} \ar[ur]\ar[r]         &  \ldots \\
                                                                                       &  \langle\pi(U_2)v^2_1\rangle \ar@{}[d]|{\bigoplus} \ar[r]\ar[ur]         &  \langle\pi(U_3)v^3_2\rangle\ar@{}[d]|{\bigoplus} \ar[r]                &  \ldots \\
            \langle\pi(U_1)v_1^1\rangle \ar[ur]\ar[r]\ar[ddr] \ar@{}[dddd]|{\bigoplus} &  \langle\pi(U_2)v^2_2\rangle \ar@{}[dd]|{\bigoplus} \ar[r]               &  \langle\pi(U_3)v^3_3\rangle\ar@{}[d]|{\bigoplus} \ar[lr]\ar[r]         &  \ldots \\
                                                                                       &                                                                          &  \langle\pi(U_3)v^3_4\rangle\ar@{}[d]|{\bigoplus} \ar[r]                &  \ldots \\
                                                                                       &  \langle\pi(U_2)v^2_3\rangle \ar@{}[dd]|{\bigoplus} \ar[ur]\ar[r]\ar[dr] &  \langle\pi(U_3)v^3_5\rangle\ar@{}[d]|{\bigoplus} \ar[r]                &  \ldots \\
                                                                                       &                                                                          &  \langle\pi(U_3)v^3_6\rangle\ar@{}[d]|{\bigoplus} \ar[lr]\ar[r]  &  \ldots \\      
            \langle\pi(U_1)v_2^1\rangle \ar[r]                                         &  \langle\pi(U_2)v^2_4\rangle \ar[r]                                      &  \langle\pi(U_3)v^3_7\rangle                      \ar[r]                &  \ldots
         }
      }
   \end{figure}
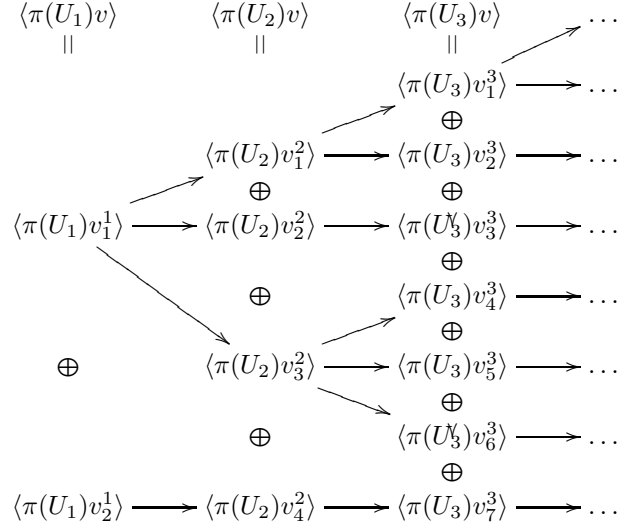

\end{proof}
 
Notice that the maps $p^{n+1}_n : \Gamma_{n+1}(\pi,v) \rightarrow \Gamma_n(\pi,v)$ defined by $p_n(\lambda) = \lambda|_{\ga_n}$ define a projective system.  We refer to the set $\Gamma(\pi,v) = \varprojlim \Gamma_n(\pi,v) \subseteq \Lambda^+(\gu_\infty,\ga_\infty)$ as the \textbf{highest-weight support} of $\pi$.  If we arrange the highest weights in a tree as in Figure~\ref{weighttree}, then we see that elements of $\Gamma(\pi,v)$ correspond to infinite paths.
 
We now examine the connection between conical and spherical representations of $G$.  Recall that for a finite-dimensional Riemannian symmetric space the irreducible finite-dimensional conical and spherical representations are identical.  The situation is much different for infinite-dimensional symmetric spaces, as the following corollary shows.

\begin{corollary}
If $\mathrm{Rank} (U_\infty/K_\infty) < \infty$, then a unitary irreducible representation is spherical if and only if it is conical.  If $\mathrm{Rank} (U_\infty/K_\infty) = \infty$, then no unitary irreducible representation is both spherical and conical.
\end{corollary}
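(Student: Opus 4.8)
The plan is to deduce both directions from the classification of irreducible conical representations in Theorem~\ref{irreduciblehighestweightprop}(3) together with the classification of irreducible spherical representations furnished by \cite[Theorem 4.5]{DOW}, comparing the arithmetic conditions on highest weights in the two cases. By Theorem~\ref{irreduciblehighestweightprop}(3), an irreducible unitary conical representation of $U_\infty$ is a highest-weight representation $\pi_\mu$ for some $\mu\in\Lambda^+(\gg_\infty,\ga_\infty)$, and by the discussion at the end of Section~\ref{highestWeightRepSection} the smoothness of such a representation is equivalent to $\mu$ being a \emph{finite} nonnegative integer combination of the fundamental weights $\xi_i$. On the spherical side, \cite[Theorem 4.5]{DOW} (applied through the Cartan--Helgason description recalled in Section~2) characterizes the irreducible spherical highest-weight representations $\pi_\mu$ of $U_\infty$ by the condition $\langle\mu,\alpha\rangle/\langle\alpha,\alpha\rangle\in\Z^+$ for all restricted roots $\alpha$, i.e.\ again $\mu\in\Lambda^+(\gg_\infty,\ga_\infty)$, but with an additional finiteness restriction on the support of $\mu$ needed to make the spherical vector lie in the Hilbert space completion.

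First I would treat the finite-rank case. When $\Rank(U_\infty/K_\infty)=r<\infty$, the root system $\Sigma(\gg_\infty,\ga_\infty)$ is one of $A_r,B_r,C_r,D_r$ and there are only finitely many fundamental weights $\xi_1,\dots,\xi_r$; hence \emph{every} $\mu\in\Lambda^+(\gg_\infty,\ga_\infty)$ is automatically a finite combination $\mu=\sum_{i=1}^r c_i\xi_i$, so the parametrizing sets for irreducible conical and irreducible spherical representations coincide. It remains to check that for such $\mu$ the representation $\pi_\mu$ is conical if and only if it is spherical. One direction follows from Theorem~\ref{irreduciblehighestweightprop}: the highest-weight vector $v$ of $\pi_\mu$ is fixed by each $M_nN_n$ by the Cartan--Helgason theorem applied at finite level, so $\pi_\mu$ is conical for every dominant integral $\mu$. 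Conversely, since $\pi_\mu$ is a direct limit of the finite-dimensional irreducible spherical representations $\pi_{\mu_n}$ of $U_n$, each of which has a $K_n$-fixed vector, and since in the finite-rank situation these fixed vectors are compatible (their images under the connecting maps of Theorem~\ref{irreduciblerestriction} agree), their limit yields a $K_\infty$-fixed vector in $\cH_\mu$; thus $\pi_\mu$ is spherical. This gives the equivalence.

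Next I would treat the infinite-rank case. When $\Rank(U_\infty/K_\infty)=\infty$, inspection of the explicit fundamental weights $\xi_i$ for $A_\infty,B_\infty,C_\infty,D_\infty$ recorded in Section~\ref{highestWeightRepSection} shows that each $\xi_i$ has \emph{infinitely many nonzero coordinates} (the tails $(\ldots,2,2,2,\ldots)$). Suppose $\pi_\mu$ is irreducible and conical; by Theorem~\ref{irreduciblehighestweightprop}(3), $\mu\in\Lambda^+(\gg_\infty,\ga_\infty)$, so $\mu=\sum_i c_i\xi_i$ with $c_i\in\Z^+$. If $\mu\neq 0$ some $c_j>0$, and then $\mu$ has infinitely many nonzero coordinates, so $\mu$ is \emph{not} a finite combination of the $\xi_i$ in the relevant sense; by \cite[Theorem 4.5]{DOW} such a representation has no $K_\infty$-fixed vector in $\cH_\mu$ (the candidate spherical vector fails to lie in the Hilbert completion), so $\pi_\mu$ is not spherical. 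If $\mu=0$ then $\pi_\mu$ is trivial, which is spherical but not conical (a conical vector must be cyclic and $MN$-fixed, forcing the representation to be infinite-dimensional at each finite stage by the finite-dimensional Cartan--Helgason correspondence — or more directly, the trivial representation is $M_nN_n$-fixed yet not conical in the sense of our definition because it is spherical, and one checks directly it fails cyclicity against the definition's requirements). Hence no nontrivial irreducible unitary representation can be simultaneously spherical and conical, and the trivial one is excluded as well.

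The main obstacle I anticipate is the careful bookkeeping in the infinite-rank case: one must verify precisely when the algebraic direct limit of finite-level fixed vectors converges in the Hilbert-space completion $\cH_\mu=\overline{\varinjlim\cH_{\mu_n}}$, and this is exactly the place where the contrast with spherical representations appears. The computation hinges on the growth of the norms $\|v_n^{K_n}\|$ of the normalized $K_n$-fixed vectors inside $\cH_{\mu_n}$ as $n\to\infty$; the relevant estimates are precisely those carried out in \cite{DOW}, so I would invoke \cite[Theorem 4.5]{DOW} as a black box rather than reprove them. A secondary subtlety is ensuring that the ``conical $\Rightarrow$ highest-weight'' direction of Theorem~\ref{irreduciblehighestweightprop}(3) really does produce a $\mu$ in the restricted-root lattice $\Lambda^+(\gg_\infty,\ga_\infty)$ (as opposed to some larger weight lattice), but this is guaranteed by the admissibility hypothesis and the compatibility of the $M_nN_n$-invariance conditions, which forces $\mu(\gt_n)=0$ at every level via Cartan--Helgason.
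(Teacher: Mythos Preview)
Your overall architecture matches the paper's proof: invoke Theorem~\ref{irreduciblehighestweightprop}(3) to identify the irreducible conical representations with the highest-weight representations $\pi_\mu$, $\mu\in\Lambda^+(\gg_\infty,\ga_\infty)$, and then cite \cite[Theorem~4.5]{DOW} as a black box to decide which of those are spherical. The paper's own proof is exactly these two sentences and nothing more.

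Where your proposal goes wrong is in the extra material you add while trying to unpack the black box. In the infinite-rank paragraph you conflate two unrelated conditions: the criterion ``$\mu$ is a finite $\Z^+$-combination of the $\xi_i$'' governs \emph{smoothness} of $\pi_\mu$ (end of Section~\ref{highestWeightRepSection}), not sphericality. Every $\mu\in\Lambda^+$ is automatically a (possibly infinite) $\Z^+$-combination of fundamental weights by construction, so the sentence ``$\mu$ is not a finite combination of the $\xi_i$ in the relevant sense; by \cite[Theorem~4.5]{DOW} such a representation has no $K_\infty$-fixed vector'' is a non sequitur. What \cite{DOW} actually proves is a global dichotomy on $\Rank(U_\infty/K_\infty)$, not a weight-by-weight support condition; you should simply quote it, as the paper does, rather than paraphrase it via smoothness.

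Your handling of the trivial representation is also incorrect. Under the paper's definition, the one-dimensional trivial representation \emph{is} conical: any nonzero vector is cyclic, locally finite, and fixed by every $M_nN_n$. Your parenthetical attempt to rule it out (``not conical in the sense of our definition because it is spherical'', ``fails cyclicity'') contradicts the definition directly. The corollary as stated is really about nontrivial representations (or one reads \cite[Theorem~4.5]{DOW} as handling this degeneration); in any case, do not try to argue that the trivial representation is not conical.

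Finally, your finite-rank argument that the $K_n$-fixed vectors are ``compatible'' under the connecting maps of Theorem~\ref{irreduciblerestriction} is not obvious: those connecting maps identify \emph{highest-weight} vectors, not spherical vectors, and one has to check that the projection of a $K_{n+1}$-fixed vector onto $\cH_{\mu_n}$ is $K_n$-fixed and nonzero with controlled norm. That is precisely the content of \cite[Theorem~4.5]{DOW}, so again just cite it.
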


\begin{proof}
By part (3) of Theorem~\ref{irreduciblehighestweightprop}, we see that the irreducible conical representations are precisely the highest-weight representations of $U_\infty$ with highest weight $\mu\in \Lambda^+(U_\infty, K_\infty)$.  By Theorem~4.5 in \cite{DOW}, it follows that these highest-weight representations of $U_\infty$ are spherical if and only if $\mathrm{Rank} (U_\infty/ K_\infty) < \infty$.  Furthermore, if $\mathrm{Rank} (U_\infty/K_\infty) < \infty$, then the spherical representations of $U_\infty$ are exhausted by the irreducible highest-weight representations.  
\end{proof}

\subsection{Highest-Weight Supports of Conical Representations}
In this section we explore some of the properties of the highest-weight trees associated with conical representations.  These trees form an invariant for conical representations, but as we shall see it is possible for two distinct conical representations to possess the same highest-weight tree.

First we show that the tree set of a conical representation is independent of the choice of conical vector:
\begin{theorem}
\label{uniquetree}
Let $(\pi,\cH)$ be a unitary conical representation of $U_\infty$. Then $\Gamma_n(\pi,v) = \Gamma_n(\pi,w)$ for any conical vectors $v,w$ in $\cH$.    
\end{theorem}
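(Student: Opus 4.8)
The plan is to show that both $\Gamma_n(\pi,v)$ and $\Gamma_n(\pi,w)$ coincide with an intrinsic object attached to $\pi$ alone, namely the set of highest weights $\mu\in\Lambda^+(\gu_n,\ga_n)$ such that the isotypic component $\cH[\mu]$ (the $U_n$-isotypic subspace of type $\mu$) is nonzero \emph{and} contains a nonzero $M_nN_n$-fixed vector. Call this set $\Gamma_n(\pi)$. The inclusion $\Gamma_n(\pi,v)\subseteq\Gamma_n(\pi)$ is immediate: if $v_\mu=\operatorname{pr}_{\cH[\mu]}v\neq 0$, then since $v$ is $M_nN_n$-fixed and $\cH[\mu]$ is $U_n$-invariant (hence $G_n$-invariant on the locally finite part, so $M_nN_n$-invariant), $v_\mu$ is a nonzero $M_nN_n$-fixed vector in $\cH[\mu]$. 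The real content is the reverse inclusion $\Gamma_n(\pi)\subseteq\Gamma_n(\pi,v)$.

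For the reverse inclusion, suppose $\mu\in\Gamma_n(\pi)$, so there is a nonzero $u\in\cH[\mu]^{M_nN_n}$. I want to conclude $v_\mu\neq 0$. The key is that $v$ is a \emph{cyclic} vector for $\pi$: $\cH=\overline{\langle\pi(U_\infty)v\rangle}$. Since $\cH[\mu]$ is a nonzero $U_\infty$-invariant (closed) subspace — it is $U_n$-invariant and one checks the isotypic spaces are nested compatibly so the union over the tree is $U_\infty$-invariant — and $v$ is cyclic, the projection $\operatorname{pr}_{\cH[\mu]}v$ must be nonzero; otherwise $\langle\pi(U_\infty)v\rangle\subseteq\cH[\mu]^\perp$, contradicting cyclicity. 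Wait — I need $\cH[\mu]$ to genuinely be a $U_\infty$-subspace and nonzero; nonzero follows from $u\neq 0$, and $U_\infty$-invariance follows because for $m\geq n$, Theorem~\ref{irreduciblerestriction} shows each $U_m$-isotypic piece sits inside a unique $U_n$-isotypic piece, so the $U_n$-isotypic space of type $\mu$ is a union of $U_m$-isotypic spaces and is thus $U_m$-invariant for all $m\geq n$, hence $U_\infty$-invariant. So $v_\mu=\operatorname{pr}_{\cH[\mu]}v\neq 0$, i.e.\ $\mu\in\Gamma_n(\pi,v)$. Therefore $\Gamma_n(\pi,v)=\Gamma_n(\pi)$, and the same argument with $w$ in place of $v$ gives $\Gamma_n(\pi,w)=\Gamma_n(\pi)$, proving the theorem.

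The main obstacle I anticipate is the claim that the $U_n$-isotypic subspace $\cH[\mu]$ is actually $U_\infty$-invariant; this is where propagation and Theorem~\ref{irreduciblerestriction} are essential, and it must be argued carefully rather than asserted. A secondary point worth spelling out: to even speak of the $M_nN_n$-action one uses (as in the Remark after the definition of conical representations) that $\pi$ restricted to $\langle\pi(U_n)\cdot\rangle$ is locally finite, hence extends analytically to $G_n$; since conical vectors live in $\cH^{\mathrm{fin}}$ this is fine, but one should note that $\cH[\mu]$ decomposes as a sum of finite-dimensional $U_n$-invariant pieces so the extension is unambiguous there. Once these structural facts are in place, the cyclicity argument is short. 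An alternative route — showing directly that $\langle\pi(U_\infty)v\rangle$ and $\langle\pi(U_\infty)w\rangle$ have the same isotypic content because they are both dense — amounts to the same thing and I would not pursue it separately.
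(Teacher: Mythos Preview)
Your strategy has a genuine gap at precisely the point you flagged as the main obstacle. The $U_n$-isotypic subspace $\cH[\mu]$ is \emph{not} $U_\infty$-invariant in general, and Theorem~\ref{irreduciblerestriction} does not establish it. That theorem controls only the $U_n$-cyclic span of a $U_m$-\emph{highest-weight vector}: it says that span is irreducible of type $\mu_m|_{\ga_n}$. It says nothing about the rest of the $U_m$-irreducible representation $\pi_{\mu_m}$, and when $\pi_{\mu_m}$ is restricted to $U_n$, ordinary branching produces several distinct $U_n$-types (already for $\SU(2)\subset\SU(3)$ the restriction of a nontrivial spherical irreducible contains more than one $\SU(2)$-type). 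Consequently a $U_m$-isotypic piece of type $\lambda$ does not sit inside a single $U_n$-isotypic piece, and $\cH[\mu]$ is not a union of $U_m$-isotypic spaces. Worse, your intrinsic set $\Gamma_n(\pi)$ can be strictly larger than $\Gamma_n(\pi,v)$: other spherical $U_n$-types $\mu'$ show up in the branching of $\pi_{\lambda}|_{U_n}$ for $\lambda\in\Gamma_m(\pi,v)$, and any nonzero $\cH[\mu']$ with $\mu'\in\Lambda^+(\gu_n,\ga_n)$ automatically contains highest-weight (hence $M_nN_n$-fixed) vectors. So the reverse inclusion $\Gamma_n(\pi)\subseteq\Gamma_n(\pi,v)$ fails, and with it the whole argument.

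The paper's proof avoids this by never invoking the ambient $U_n$-isotypic space. It argues by contradiction: assume $\mu\in\Gamma_n(\pi,w)\setminus\Gamma_n(\pi,v)$, and show that $\langle\pi(U_\infty)w_\mu\rangle$ is orthogonal to $\langle\pi(U_\infty)v\rangle$, contradicting cyclicity of $v$. The orthogonality is checked at each finite level $m\geq n$: using Lemma~\ref{irreducible_primary} and Theorem~\ref{irreduciblerestriction}, both $\langle\pi(U_m)v\rangle$ and $\langle\pi(U_m)w_\mu\rangle$ decompose as direct sums of \emph{irreducible} $U_m$-pieces, indexed by $\Gamma_m(\pi,v)$ and by $\Gamma_m^\mu(\pi,w)=\{\nu\in\Gamma_m(\pi,w):\nu|_{\ga_n}=\mu\}$ respectively. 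The hypothesis $\mu\notin\Gamma_n(\pi,v)$ forces these index sets to be disjoint, so the corresponding irreducibles are pairwise inequivalent and hence orthogonal. The key difference from your approach is that only the $U_m$-cyclic spans of the conical components $v_\lambda$ and $w_\nu$ are compared --- never the full isotypic subspaces --- and Theorem~\ref{irreduciblerestriction} is exactly what identifies those cyclic spans as irreducible of known type.
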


\begin{proof}
Suppose that both $v$ and $w$ are conical vectors in $\cH$ and that  $\mu\in\Gamma_n(\pi,w)$ but $\mu\notin\Gamma_n(\pi,v)$.  Write $w_\mu$ for the projection of $w$ onto the $\mu$-isotypic vectors in $\cH$.  Since $\mu\in\Gamma_n(\pi,w)$, it follows that $w_\mu\neq 0$. Define $W = \langle \pi(U_\infty) w_\mu \rangle$ and $V = \langle \pi(U_\infty) v \rangle$.  We claim that $W \perp V$, which will be a contradiction since $V$ is dense in $\cH$.

 Note that $W = \bigcup_{m\geq n} \langle \pi(U_m) w_\mu \rangle$ and $V = \bigcup_{m\geq n} \langle \pi(U_m) v \rangle$.  It is sufficient to show that $\langle \pi(U_m) w_\mu \rangle \perp  \langle \pi(U_m) v \rangle$ for $m\geq n$.  As before, we see from Lemma~\ref{irreducible_primary} and Theorem~\ref{irreduciblerestriction}  that
\[
\langle \pi(U_m) v \rangle = \bigoplus_{\lambda\in\Gamma_m(\pi,v)} \langle \pi(U_m) v_\lambda \rangle 
\cong_{U_m} \bigoplus_{\lambda\in\Gamma_m(\pi,v)} \cH_\lambda
\]
and 
\[
\langle \pi(U_m) w_\mu \rangle = \bigoplus_{\nu\in\Gamma_m^\mu(\pi,w) } \langle \pi(U_m) w_\nu \rangle 
\cong_{U_m} \bigoplus_{\nu\in\Gamma_m(\pi,w)} \cH_\nu,
\]
where $\Gamma_m^\mu(\pi,w) = \{ \nu\in\Gamma_m(\pi,w) \text{ s.t. } \nu|_{\ga_n} = \mu \}$.

Fix $m\geq n$.  Since $\mu\notin\Gamma_n(\pi,v)$, it follows that $\lambda|_{\ga_n}\neq \mu$ for all $\lambda\in\Gamma(\pi,v)$.  Thus $\Gamma_m(\pi,v)$ and $\Gamma_m^\mu(\pi,w)$  are disjoint.  This means that $\langle \pi(U_m)v_\lambda \rangle \perp \langle \pi(U_m) w_\nu \rangle$ for each $\lambda \in \Gamma_m(\pi,v)$ and $\nu\in\Gamma_m^\mu(\pi,w)$.  Hence $\langle \pi(U_m) v \rangle \perp \langle \pi(U_m) w_\mu \rangle$ for all $m$, as we wanted to show.
\end{proof}

From now on, we write $\Gamma_n(\pi) \equiv \Gamma_n(\pi,v)$ and $\Gamma(\pi) = \varprojlim \Gamma_n(\pi,v)$, where $v$ is any conical vector of a conical representation $\pi$ of $U_\infty$.  

\begin{corollary}
Let $(\pi, \cH)$ and $(\rho, \cK)$ be unitary conical representations of $(U_\infty,K_\infty)$.  If there is $n\in\N$ such that $\Gamma_n(\pi)\neq\Gamma_n(\rho)$, then $\pi\not\cong\rho$.
\end{corollary}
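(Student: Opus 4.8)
The plan is to prove the contrapositive: assuming $\pi\cong\rho$, I will show that $\Gamma_n(\pi)=\Gamma_n(\rho)$ for every $n\in\N$. Let $T:\cH\to\cK$ be a unitary operator intertwining $\pi$ and $\rho$, and fix a conical vector $v\in\cH^{\mathrm{fin}}$ for $\pi$. The idea is to transport $v$ to a conical vector $w:=Tv$ for $\rho$ and then use the fact that $T$, being a $U_n$-intertwining operator, must match up the isotypic projections of $v$ and $w$.

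First I would check that $w=Tv$ is a conical vector for $\rho$. Cyclicity is immediate: $\langle\rho(U_\infty)w\rangle=T\langle\pi(U_\infty)v\rangle$ is dense since $T$ is a surjective isometry and $v$ is cyclic. Local finiteness holds because $T$ intertwines $\pi|_{U_n}$ and $\rho|_{U_n}$, so $\langle\rho(U_n)w\rangle=T\langle\pi(U_n)v\rangle$ is finite-dimensional for each $n$. It remains to verify $\rho(M_nN_n)w=w$. Restricting $T$ to the finite-dimensional invariant subspace $\langle\pi(U_n)v\rangle$ gives a unitary $U_n$-intertwining operator onto $\langle\rho(U_n)w\rangle$; these finite-dimensional unitary $U_n$-representations extend uniquely to holomorphic representations of $(U_n)_\C$, hence to representations of $G_n$, and by the intertwining-preserving property of Weyl's trick (Theorem~\ref{WeylInfiniteTrick}) the operator $T$ also intertwines these extended $G_n$-actions. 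Since $M_nN_n\subseteq G_n$, this yields $\rho(M_nN_n)w=T\pi(M_nN_n)v=Tv=w$, so $w$ is indeed a conical vector for $\rho$.

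Next I would identify the highest-weight sets. Since $T$ intertwines $\pi|_{U_n}$ and $\rho|_{U_n}$, it carries the $\mu$-isotypic subspace of $\cH$ onto the $\mu$-isotypic subspace of $\cK$ for each $\mu\in\Lambda^+(\gu_n,\ga_n)$, and therefore commutes with the corresponding orthogonal projections: $T(\operatorname{pr}_{\cH_\mu}v)=\operatorname{pr}_{\cK_\mu}(Tv)=\operatorname{pr}_{\cK_\mu}w$. As $T$ is injective, $\operatorname{pr}_{\cH_\mu}v\neq 0$ if and only if $\operatorname{pr}_{\cK_\mu}w\neq 0$, so $\Gamma_n(\pi,v)=\Gamma_n(\rho,w)$. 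By Theorem~\ref{uniquetree} the left-hand side is $\Gamma_n(\pi)$ and the right-hand side is $\Gamma_n(\rho)$, so $\Gamma_n(\pi)=\Gamma_n(\rho)$ for all $n$, contradicting the hypothesis. Hence $\pi\not\cong\rho$.

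I expect the only nonroutine step to be the verification that $\rho(M_nN_n)w=w$, i.e. that a unitary $U_n$-intertwining operator between finite-dimensional representations automatically intertwines their holomorphic extensions to $G_n$. This is exactly the statement that Weyl's unitary trick preserves intertwining algebras, recorded in Theorem~\ref{WeylInfiniteTrick}, so no genuine work is required beyond invoking it; the remainder is bookkeeping with isotypic projections.
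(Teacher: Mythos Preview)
Your proof is correct. The paper does not actually give a proof of this corollary---it is stated immediately after Theorem~\ref{uniquetree} as a direct consequence, with no argument supplied. Your proposal is precisely the routine verification the paper omits: transport a conical vector along the intertwining isomorphism, observe that isotypic projections are preserved, and invoke Theorem~\ref{uniquetree} to remove dependence on the choice of conical vector. This is exactly what the authors have in mind; you have simply written out the details.
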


In particular, we have shown that having the same highest-weight tree is a necessary condition for two conical representations to be equivalent.  Later we will provide examples of inequivalent conical representations with the same highest-weight trees.  However, two conical representations with the same highest-weight trees are nonetheless \textit{almost} equivalent in a certain sense, as the following theorem shows. 

\begin{theorem}
\label{almostequivalent}
Let $(\pi, \cH)$ and $(\rho, \cK)$ be conical representations of $(U_\infty,K_\infty)$ with respective conical vectors $v$ and $w$ such that $\Gamma_n(\pi) = \Gamma_n(\rho)$ for each $n$.  Consider the algebraic subrepresentations $V = \langle \pi(U_\infty) v \rangle$ and $W = \langle \rho(U_\infty)w\rangle$ generated by the action of $U_\infty$ on $v$ and $w$.  Write $\pi_V$ and $\rho_W$ for the representations of $U_\infty$ given by restricting $\pi$ and $\rho$ to the dense invariant subspaces $V$ and $W$ of $\cH$ and $\cK$, respectively.  Then 
\begin{enumerate}
\item $\pi_V \cong \rho_W$
\item $\pi|_{U_n} \cong \rho|_{U_n}$ for each $n$.
\end{enumerate}
\end{theorem}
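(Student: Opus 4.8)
The plan is to treat the two assertions separately, both resting on the $U_n$‑isotypic decomposition established in the proof of Theorem~\ref{irreduciblehighestweightprop}: writing $v_\mu = \operatorname{pr}_{\cH_\mu}v$ for $\mu\in\Gamma_n(\pi)$, one has $\langle\pi(U_n)v\rangle=\bigoplus_{\mu\in\Gamma_n(\pi)}\langle\pi(U_n)v_\mu\rangle$ where each $\langle\pi(U_n)v_\mu\rangle$ is $U_n$‑irreducible of highest weight $\mu$ with highest‑weight vector $v_\mu$ (by Lemma~\ref{irreducible_primary}), and distinct $\mu$ give orthogonal, non‑isomorphic summands. The same holds for $\rho$, $w$, $w_\mu$, and since $\Gamma_n(\pi)=\Gamma_n(\rho)$ the two decompositions are indexed by the same finite set.

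For (1) I would first show that for each $n$ there is a \emph{unique} $U_n$‑equivariant linear isomorphism $T_n\colon\langle\pi(U_n)v\rangle\to\langle\rho(U_n)w\rangle$ with $T_n(v)=w$. Uniqueness: a $U_n$‑equivariant map must send the $\mu$‑isotypic subspace $\langle\pi(U_n)v_\mu\rangle$ into $\langle\rho(U_n)w_\mu\rangle$, and by Schur's lemma its restriction there is a scalar multiple of a fixed isomorphism; the condition $T_n(v)=w$ together with the linear independence of the $w_\mu$ forces every scalar to be $1$. Existence: take on each summand the isomorphism $\langle\pi(U_n)v_\mu\rangle\to\langle\rho(U_n)w_\mu\rangle$ carrying $v_\mu$ to $w_\mu$, which exists and is unique because both spaces are irreducible of the same highest weight with one‑dimensional highest‑weight space. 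Since $\langle\pi(U_n)v\rangle$ is generated by $v$ as a $U_n$‑module, the restriction of $T_{n+1}$ to it is again $U_n$‑equivariant, sends $v$ to $w$, and has image $\langle\rho(U_n)w\rangle$, hence equals $T_n$ by uniqueness. Thus the $T_n$ cohere into a $U_\infty$‑equivariant linear bijection $T=\varinjlim T_n\colon V=\varinjlim\langle\pi(U_n)v\rangle\to W=\varinjlim\langle\rho(U_n)w\rangle$, giving the equivalence $\pi_V\cong\rho_W$ of representations on the algebraic direct limits. One should note that $T$ need not extend to a bounded operator, which is exactly what leaves room for $\pi\not\cong\rho$ as unitary representations, as will be seen later.

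For (2), since $v$ is cyclic, $\cH=\overline{\bigcup_m\langle\pi(U_m)v\rangle}$ is an increasing union of finite‑dimensional $U_n$‑invariant subspaces (for $m\ge n$). The $U_n$‑isotypic projection $P_\lambda$ is bounded and preserves each $\langle\pi(U_m)v\rangle$, so the $\lambda$‑isotypic component of $\cH$ is the closure of the increasing union of the $\lambda$‑isotypic components of the $\langle\pi(U_m)v\rangle$; hence the multiplicity of the $U_n$‑irreducible $\cH_\lambda$ in $\pi|_{U_n}$ equals $\sup_{m\ge n}$ of its multiplicity in $\langle\pi(U_m)v\rangle|_{U_n}$. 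But $\langle\pi(U_m)v\rangle\cong_{U_m}\bigoplus_{\nu\in\Gamma_m(\pi)}\cH_\nu$, so that multiplicity is $\sum_{\nu\in\Gamma_m(\pi)}[\cH_\nu|_{U_n}:\cH_\lambda]$, which depends only on the finite set $\Gamma_m(\pi)$ via universal branching numbers. As $\Gamma_m(\pi)=\Gamma_m(\rho)$ for all $m$, the multiplicity of $\cH_\lambda$ in $\pi|_{U_n}$ coincides with that in $\rho|_{U_n}$ for every $\lambda$, and since a unitary representation of the compact group $U_n$ is determined up to unitary equivalence by its multiplicity function, $\pi|_{U_n}\cong\rho|_{U_n}$.

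The main obstacle is the coherence argument in (1): one must rigidify the finite‑level intertwiners enough — here by the normalization $v\mapsto w$ plus the uniqueness forced by Schur and cyclicity — that they automatically patch together over the direct limit, while being careful not to claim boundedness of $T$. The only other point requiring care is the interchange of multiplicities with the increasing union in (2), which uses just the boundedness of the isotypic projections and the continuity of dimension along increasing unions of finite‑dimensional subspaces.
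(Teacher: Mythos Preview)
Your argument for part~(1) is essentially the paper's: both build level-by-level $U_n$-intertwiners $V_n\to W_n$ sending $v$ to $w$, observe that cyclicity of $v$ forces uniqueness, and use this rigidity to get coherence across $n$ and hence a well-defined $U_\infty$-equivariant bijection on the algebraic direct limits. Your emphasis on Schur's lemma to pin down the scalars on each isotypic summand is a slightly cleaner way of phrasing the uniqueness step, but the content is the same.

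For part~(2) you take a genuinely different route. The paper argues constructively: it writes $\cH=\bigoplus_{m\ge n}\widetilde V_m$ with $\widetilde V_m=V_m\ominus V_{m-1}$, notes that $V_m\cong_{U_m} W_m$ (from part~(1)) forces $\widetilde V_m\cong_{U_n}\widetilde W_m$ for each $m$, and assembles a unitary $U_n$-intertwiner $\cH\to\cK$ block by block. You instead compute multiplicities: the multiplicity of each $U_n$-irreducible in $\pi|_{U_n}$ is the increasing limit over $m$ of its multiplicity in $\langle\pi(U_m)v\rangle|_{U_n}\cong\bigoplus_{\nu\in\Gamma_m}\cH_\nu|_{U_n}$, which depends only on $\Gamma_m$; since $\Gamma_m(\pi)=\Gamma_m(\rho)$ the multiplicity functions of $\pi|_{U_n}$ and $\rho|_{U_n}$ agree, and unitary representations of a compact group are classified by their multiplicity functions. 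Your approach is slicker and avoids the orthogonal-complement bookkeeping, at the cost of being less explicit about the unitary intertwiner. One small point to make explicit in your version: when the $\sup$ is infinite you need that both $\cH$ and $\cK$ are separable (they are, being cyclic for the separable group $U_\infty$) so that the infinite multiplicity is $\aleph_0$ on both sides. The paper's block construction sidesteps this cardinality issue entirely.
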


\begin{proof}
We begin by proving (1).  We claim that the map $L:V \rightarrow W$ induced by $\pi(g) v \mapsto \rho(g) w$ is a well-defined invertible $U_\infty$-intertwining operator.   

As before, write $V_m = \langle \pi(U_m) v \rangle$ and $W_m = \langle \pi(U_m) w \rangle$, so that $V = \bigcup_{m\geq n} V_m$ and $W = \bigcup_{m\geq n} W_m$.  Then   
\[
 V_m = \bigoplus_{\lambda\in\Gamma_m} \langle \pi(U_m) v_\lambda \rangle
 \cong_{U_m} \bigoplus_{\lambda\in\Gamma_m} \cH_\lambda
\]
and
\[
 W_m = \bigoplus_{\lambda\in\Gamma_m} \langle \rho(U_m) w_\lambda \rangle
 \cong_{U_m} \bigoplus_{\lambda\in\Gamma_m} \cH_\lambda,
\]
where $\Gamma_m = \Gamma_m(\pi) = \Gamma_m(\rho)$.  Thus $V_m$ and $W_m$ are $U_m$-isomorphic.  We must show that there is an invertible $U_m$-intertwining operator $L^m : V_m \rightarrow W_m$ that maps $v$ to $w$.

In %\marginpar{Too obvious?  Remove?}
fact, we note that for each $\lambda\in\Gamma_m$ there is a (not necessarily unitary) $U_m$-intertwining operator $L_\lambda : \langle\pi(U_m)v_\lambda\rangle \rightarrow \langle\rho(U_m)w_\lambda\rangle$ given by $\pi(g)v_\lambda\mapsto\rho(g)w_\lambda$.  We can then define
\[
L^m = \bigoplus_{\lambda\in\Gamma_m} L_\lambda :  V_m = \bigoplus_{\lambda\in\Gamma_m} \langle \pi(U_m) v_\lambda \rangle \rightarrow \bigoplus_{\lambda\in\Gamma_m} \langle \rho(U_m) w_\lambda \rangle = W_m.
\]
Hence $L^m v = L^m(\sum_{\lambda\in\Gamma_m}v_\lambda) = \sum_{\lambda\in\Gamma_m}w_\lambda = w$.  

Since $v$ and $w$ are cyclic vectors in $V_m$ and $W_m$, respectively, $L^m$ is in fact uniquely determined as an intertwining operator by the fact that it maps $v$ to $w$.  In particular, $L^m|V_n = L^n$ for all $n\leq m$.  Thus the family $\{L^m\}_{m\in\N}$ is a direct system of intertwining operators that induces a continuous $U_\infty$-intertwining operator 
$L:V = \varinjlim V_m \rightarrow \varinjlim W_m =W $
such that $Lv = w$.  

Next we prove (2).  Fix $n\in\N$.  Recursively define $\widetilde{V}_n = V_n$ and $\widetilde{V}_m = V_m \ominus V_{m-1}$ for $m > n$, where the orthogonal complement is taken with respect to the Hilbert space structure inherited by $V_n$ as a closed subspace of $\cH$.    
Notice that $\widetilde{V}_m$ is a finite-dimensional $U_n$-invariant subspace of $\cH$ for each $m\geq n$.  We define $U_n$-invariant spaces $\widetilde{W}_m\subseteq \cK$ for each $m\geq n$ in exactly the same way.  

Recall that $V_m$ and $W_m$ give equivalent representations of $U_n$ for each $m\geq n$ under the intertwining operator $L^m$. It follows that $\widetilde{V}_m = V_m \ominus V_{m-1}$ and $\widetilde{W}_m = W_m \ominus W_{m-1}$ are $U_n$-isomorphic for all $m > n$.  Note that
\[
\cH = \bigoplus_{m\geq n} \widetilde{V}_m \text{ and } \cK = \bigoplus_{m\geq n} \widetilde{W}_m, 
\]
 where the direct sums are orthogonal.  Since there is a unitary $U_n$ intertwining operator between $\widetilde{V}_m$ and $\widetilde{W}_m$ for all $m\geq n$, it follows that there is a unitary $U_n$-intertwining operator between $\cH$ and $\cK$.  
\end{proof}

\subsection{Smooth Conical Representations}
\noindent
Next we classify the smooth conical representations of $U_\infty$.  As mentioned previously, these are of interest because, by the generalized Weyl unitary trick,  they are precisely the conical representations which extend to smooth conical representations of the c-dual $G_\infty$. Our next theorem classifies the smooth representations.
\begin{theorem}
\label{holomorphic_decomposition}
Suppose that $(\pi, \cH)$ is a smooth conical representation of $U_\infty$.  Then $\pi$ decomposes into a direct sum of irreducible smooth highest-weight representations.
\end{theorem}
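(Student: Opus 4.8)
The plan is to combine the weight--tree decomposition of Theorem~\ref{irreduciblehighestweightprop} with the uniform weight estimate that smoothness provides (Theorem~\ref{bounded_coefficients}); the key observation will be that smoothness forces the highest--weight support $\Gamma(\pi)$ to be \emph{countable}, so that the a priori possibly continuous decomposition of a conical representation collapses into an honest orthogonal direct sum of irreducibles.

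First I would record the consequences of smoothness. By Theorem~\ref{bounded_coefficients} there is $M>0$ with $\lVert\lambda\rVert_\infty<M$ for every highest weight $\lambda$ of an irreducible subrepresentation of $\pi|_{U_n}$ and every $n$. Hence every $\mu\in\Gamma(\pi)\subseteq\Lambda^+(\gg_\infty,\ga_\infty)$, written $\mu=\sum_i c_i\xi_i$ with $c_i\in\Z^+$, satisfies $\lVert\mu|_{\ga_n}\rVert_\infty<M$ for all $n$. Evaluating on the basis $\{e_j\}$ and using the explicit fundamental weights $\xi_i$ for the infinite root systems $A_\infty,B_\infty,C_\infty,D_\infty$ recorded in Section~\ref{highestWeightRepSection}, one checks that $\lVert\mu|_{\ga_n}\rVert_\infty$ is, up to a bounded error, non-decreasing in $n$ and comparable to $2(c_1+\cdots+c_{r_n})$; the uniform bound therefore forces $\sum_i c_i<\infty$. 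Thus each $\mu\in\Gamma(\pi)$ is a \emph{finite} combination of fundamental weights with $\sum_i c_i$ uniformly bounded, so $\Gamma(\pi)$ is countable and, by the smoothness criterion for highest--weight representations at the end of Section~\ref{highestWeightRepSection}, every $\pi_\mu$ with $\mu\in\Gamma(\pi)$ is smooth.

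Next I would prove the key extraction lemma: every nonzero smooth conical representation has an irreducible subrepresentation, necessarily of the form $\pi_\mu$ with $\mu\in\Gamma(\pi)$. Fix a conical vector $v$ and write, as in Theorem~\ref{irreduciblehighestweightprop}, $v=\sum_{\mu\in\Gamma_n(\pi)}v_\mu^{(n)}$ for each $n$, with $v_\mu^{(n)}=\sum_{\nu|_{\ga_n}=\mu}v_\nu^{(n+1)}$ (orthogonal). The numbers $\lVert v_\mu^{(n)}\rVert^2$ form a consistent mass function on the weight tree of Figure~\ref{weighttree}, and the usual Carath\'eodory extension on the compact space $\Gamma(\pi)=\varprojlim\Gamma_n(\pi)$ turns it into a Borel measure of total mass $\lVert v\rVert^2>0$; since $\Gamma(\pi)$ is countable, this measure is purely atomic, so there is $\gamma=(\gamma_n)\in\Gamma(\pi)$ with $\lim_n\lVert v_{\gamma_n}^{(n)}\rVert^2>0$. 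As the closed cyclic subspaces $V_{\gamma_n}^{(n)}=\overline{\langle\pi(U_\infty)v_{\gamma_n}^{(n)}\rangle}$ decrease, the vector $v_\gamma:=\lim_n v_{\gamma_n}^{(n)}$ exists and is nonzero; it lies in $\cH=\cH^{\mathrm{fin}}$, it is $M_kN_k$-invariant for all $k$ (by Theorem~\ref{irreduciblehighestweightprop}(1) and admissibility), and, since each $v_{\gamma_m}^{(m)}$ with $m\ge k$ generates an irreducible $U_k$-subrepresentation of type $\gamma_k$ by Theorem~\ref{irreduciblerestriction}, it lies in the $U_k$-isotypic component of type $\gamma_k$. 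Hence $v_\gamma$ is a conical vector for $W:=\overline{\langle\pi(U_\infty)v_\gamma\rangle}$ with $\Gamma_k(W,v_\gamma)=\{\gamma_k\}$ for every $k$, and so by the proof of Theorem~\ref{irreduciblehighestweightprop}(3) together with Theorem~\ref{irreducibleLimit}, $W\cong\pi_{\mu^\gamma}$ is irreducible with $\mu^\gamma\in\Gamma(\pi)$.

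Finally I would assemble the decomposition by a maximality argument. Using Zorn's lemma, choose a maximal family $\cF$ of mutually orthogonal subrepresentations of $\pi$, each an irreducible highest--weight representation $\pi_\mu$ with $\mu\in\Gamma(\pi)$ (the empty family qualifies, and chains have upper bounds of the same type). Put $\cD=\overline{\bigoplus_{W\in\cF}W}$. If $\cD\ne\cH$, then $\cD^\perp$ is a nonzero invariant subspace and $\pi|_{\cD^\perp}$ is again a smooth conical representation with conical vector $\operatorname{pr}_{\cD^\perp}v\ne0$; the extraction lemma then produces an irreducible highest--weight subrepresentation $\pi_\nu\subseteq\cD^\perp$ with $\nu\in\Gamma(\pi)$, contradicting maximality. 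Therefore $\cD=\cH$, that is, $\cH=\bigoplus_{W\in\cF}W$ is an orthogonal direct sum of irreducible highest--weight representations, each smooth by the first step. The step I expect to be the main obstacle is the passage, in the first paragraph, from the uniform bound $\lVert\lambda\rVert_\infty<M$ to the finiteness of $\sum_i c_i$: this requires a case-by-case inspection of the explicit fundamental weights to see that no dominant weight with infinitely many nonzero fundamental coordinates — however ``spread out'' — can occur in a smooth representation. Granting that, the countability of $\Gamma(\pi)$, the atomicity of the mass measure, and the Zorn argument are all routine.
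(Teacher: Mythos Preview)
Your argument is correct and reaches the same conclusion, but the route is genuinely different from the paper's.  The paper extracts an irreducible subrepresentation by a purely combinatorial observation on the weight tree: since the coordinates of a dominant restricted weight are increasing, whenever a node $\mu\in\Gamma_n(\pi)$ \emph{splits} (has at least two children in $\Gamma_{n+1}(\pi)$) one of its children must have strictly larger sup--norm than $\mu$; together with the uniform bound from Theorem~\ref{bounded_coefficients}, this forces some node to have a subtree with no further splits, and the corresponding isotypic component $v_\mu$ is then itself the highest--weight vector of an irreducible summand.  Your approach instead first deduces from the same sup--norm bound that every element of $\Gamma(\pi)$ is a finite $\Z^+$--combination of fundamental weights with uniformly bounded coefficient sum, hence that $\Gamma(\pi)$ is countable; you then use the mass function $\|v_\mu^{(n)}\|^2$ to put a Borel measure on $\Gamma(\pi)$, observe that countability makes it atomic, and build the irreducible from the limit vector $v_\gamma$ along an atom.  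Both proofs finish with the same Zorn argument.

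The trade--offs are as follows.  The paper's argument is more elementary---no measure theory is invoked, and the irreducible summand appears directly as one of the finite--level isotypic projections $v_\mu$ rather than as a Hilbert--space limit.  Your argument, on the other hand, is conceptually aligned with Section~\ref{disintegrationSection}: the mass function you introduce is exactly the measure of Theorem~\ref{disintegration}, and your proof can be read as the statement that smoothness forces that measure to be purely atomic, so that the direct integral collapses to a direct sum.  Two small points to tighten: the passage from $\|\mu|_{\ga_n}\|_\infty<M$ to a uniform bound on $\sum_i c_i$ really is the type--by--type check you flag (it is straightforward from the explicit $\xi_i$ listed at the end of Section~\ref{highestWeightRepSection}); and the convergence of $v_{\gamma_n}^{(n)}$ should be justified by noting that the successive differences $v_{\gamma_n}^{(n)}-v_{\gamma_{n+1}}^{(n+1)}$ are pairwise orthogonal with $\sum_n\|v_{\gamma_n}^{(n)}-v_{\gamma_{n+1}}^{(n+1)}\|^2=\|v_{\gamma_1}^{(1)}\|^2-\lim_n\|v_{\gamma_n}^{(n)}\|^2<\infty$.
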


\begin{proof}
Let $v$ be a conical vector for $\pi$.  For each $U_n$, write 
\[v = \sum_{\lambda\in\Gamma_n(\pi)}v_\lambda\] 
as before.
   As in Section~\ref{propagationSection}, we recursively construct a countable basis $\{e_i\}_{n\in\N}$ for $\ga_\infty$ such that $\{e_1,\ldots,e_{r_n}\}$ is a basis for $\ga_n$ for each $n$.  For each $\lambda\in \ga_n^*$, write 
\[
||\lambda||_\infty = \max_{1\leq i \leq n} |\lambda(e_i)|.
\]
In fact, if $\lambda\in\Lambda^+(\gg_n,\ga_n)$ and $\lambda = \sum_{i=1}^{r_n} a_i e_i$, then we see from the data at the end of Section~\ref{highestWeightRepSection} that $a_i \leq a_j$ when $i\leq j$; thus $||\lambda||_\infty = a_{r_n}$.
   
  For each $\mu\in\Gamma_{n}(\pi)$, let $\Gamma^\mu_{n+1}(\pi) = \{\lambda\in \Gamma_{n+1}(\pi) : \lambda|_{\ga_n} = \mu\}$.  Hence we have $||\lambda||_\infty \geq ||\mu||_\infty$ for each $\lambda \in \Gamma^\mu_{n+1}$,.
 
   Now suppose that $\mu\in\Gamma_n(\pi)$ and that there are distinct weights $\lambda_1, \lambda_2\in \Gamma^\mu_{n+1}(\pi)$.  In this case we say that $\mu$ \textit{splits} with respect to $\pi$.  Because  $\lambda_1$ and $\lambda_2$ in $\Lambda^+(\gg_n,\ga_n)$ are by assumption distinct and agree on the first $r_n$ coordinates, we see that they must differ on a coordinate $i$ with $r_n<i\leq r_{n+1}$.  Since the coefficients of dominant weights form an increasing sequence, we see that either  $||\lambda_1||_\infty > ||\lambda_2||_\infty \geq ||\mu||_\infty$ or $||\lambda_2||_\infty > ||\lambda_1||_\infty \geq ||\mu||_\infty$  
   
   In other words, if a highest weight $\mu\in\Gamma_n(\pi)$ splits, then there is a $U_{n+1}$-highest weight in $\Gamma_{n+1}^\mu(\pi)$ with a coefficient which is strictly greater than all the coefficients in $\mu$.  It follows that unless there is a weight $\mu_n\in\Gamma_n(\pi)$ for some $n$ which does not split and such that  each $\lambda \in \Gamma_m^\mu(\pi)$ for any $m\geq n$ does not split, then we can repeat this process to obtain arbitrarily large coefficients of highest weights of representations appearing in $\pi$, contradicting Lemma~\ref{bounded_coefficients}.
   Hence, there is some highest weight $\mu\in\Gamma_n(\pi)$ such that, for each $m\geq n$, the vector $v_\mu$ is a $U_m$-highest-weight vector.  Thus $\overline{\langle \pi(U_\infty)v_\mu\rangle}$ gives a highest-weight representation of $U_\infty$.
 
We have shown that every smooth unitary conical representation possesses an irreducible subrepresentation and that the orthogonal complement is also a smooth unitary conical representation.  A standard Zorn's Lemma argument then shows that $\cH$ decomposes into an orthogonal direct sum of irreducible smooth conical representations.  
\end{proof}

It follows from Theorems~\ref{bounded_coefficients} and \ref{holomorphic_decomposition} that every smooth unitary conical representation $(\pi,\cH)$ of $U_\infty$ is an orthogonal direct sum of smooth highest-weight representations: 
\[
    \pi\cong \bigoplus_{i\in\cA} \pi_{\mu_i},
\]
where $\mu_i\in\Lambda^+ = \varprojlim \Lambda^+(\gu_n, \ga_n)$ for each $i\in\cA$.  We can write each highest weight $\mu_i$ in terms of fundamental weights as in Section~\ref{highestWeightRepSection}:
\[
\mu_i = \sum_{n=1}^{k_i} a_n^i \xi_i,
\]
where $a_n^i\in\N$ for each $i$ and $n$ (each $\mu_i$ is a finite sum over the fundamental weights is finite because $\pi_{\mu_i}$ is a smooth highest-weight representation).  By Theorem~\ref{bounded_coefficients}, the smoothness of $\pi$ is equivalent to the existence of a bound $M>0$ such that $\sum_{n=1}^{k_i} a_n^i <M$ for all $i\in\cA$.

\section{Disintegration of Conical Representations}
\label{disintegrationSection}
\noindent
If we remove the assumption in Theorem~\ref{holomorphic_decomposition} that the conical representation $(\pi, \cH)$ is smooth, then we can no longer be assured that $\pi$ has an irreducible subrepresentation.  However, we would still like to describe general conical representations in terms of the irreducible ones.  This sort of description is possible with a direct-integral decomposition.  

Recall that 
\[
\Lambda^+ \equiv \Lambda^+(\gu_\infty,\ga_\infty) \equiv \varprojlim \Lambda^+(\gu_n,\ga_n) \subseteq \ga_\infty^*
\]
 denotes the set of dominant integral weights for the root system $\Sigma(\gu_\infty,\ga_\infty)$. 
 %Write $\Lambda^+_n = \Lambda^+(U_n,K_n)$ for each $n$.  %If $n<m$, then we can embed $\Lambda^+_n$ into $\Lambda^+_m$ as in \cite{DOW} by identifying the fundamental weights of $(U_n,K_n)$ with the first $n$ fundamental weights of $(U_m,K_m)$ \marginpar{Explain in more detail?}.  Similarly, the restriction from $\ga_m$ to $\ga_n$ gives a natural surjection from $\Lambda_m$ onto $\Lambda_n$ if $m>n$.  These maps allow us to identify   
 %That is, $\Lambda^+$ consists of $\lambda$ in $i\ga_\infty' = \varprojlim i\ga_n$ such that $\lambda|_{\ga_n}$ is dominant and integral for every $n$ \marginpar{This requires a consistent choice of positive systems, etc!}.  
We start by putting a topology on $\Lambda^+$.  
Each lattice $\Lambda^+(\gu_n,\ga_n)$ carries the discrete topology.  We then consider the projective limit topology on $\Lambda^+$.  This topology is defined by a basis consisting of the cylinder sets $B_\lambda  =  \{ \mu\in\Lambda^+ | \mu_{|\ga_n} = \lambda\}$, where $\lambda$ is a dominant integral weight on $\ga_n$. It is clear that $\Lambda^+$ is second-countable under this topology, since there are only countably many dominant integral weights on $i\ga_n$, for each fixed $n\in\N$, so that our basis of cylinder sets is a countable union of countable sets.  This projective-limit topology is also Hausdorff.

Next consider closed subsets $\Gamma$ of $\Lambda^+$ with the property that, for each $n\in\N$, we have $\Gamma\bigcap B_\lambda = \emptyset$ for all but finitely many $\lambda$ in $\Lambda^+_n$.  We will refer to such sets as \textbf{tree sets} because, as we shall soon see, they are in one-to-one correspondence with trees of a certain type.
We give each tree set $\Gamma$ the subspace topology.   Write $\Gamma^\lambda = B_\lambda \bigcap \Gamma = \{\mu\in\Gamma | \mu_{|{\ga_n}} = \lambda \}$ for each $n$ and each $\lambda \in \Lambda^+_n$.  We refer to these sets as \textbf{cylinder sets} for $\Gamma$.  If $\lambda\in\Lambda^+_n$ and $\Gamma^\lambda\neq 0$ (that is, there is $\mu\in\Gamma$ such that $\mu|_{\ga_n} = \lambda$), then we say that $\lambda$ is a \textbf{node} of the tree set $\Gamma$.  We write $\Gamma_n = \{\mu_{|\ga_n} | \mu\in \Gamma\}$ for the set of all nodes of $\Gamma$ that lie in $\Lambda^+_n$.   One quickly sees that $\Gamma = \varprojlim \Gamma_n$ and that $\Gamma$ has the corresponding projective limit topology.  Thus $\Gamma$ is a projective limit of compact topological spaces, from whih it follows that that $\Gamma$ is compact.

It may be readily seen that if $\pi$ is a conical representation of $U_\infty$, then the highest-weight tree $\Gamma(\pi) \subseteq \Lambda^+$ is a tree set.  This follows because each set $\Gamma_n(\pi)\subseteq \Lambda^+(\gu_n,\ga_n)$ is finite.

We spend a few moments explaining our tree-centric choice of terminology.  For each tree set $\Gamma$, we can construct a tree as follows.  Each element of $\Gamma_n$ for each $n\in\N$ forms a node of the tree.  Draw an edge from a node $\lambda$ in $\Gamma_n$ to a node $\mu$ in $\Gamma_{n+1}$ if $\mu|_{\ga_n} = \lambda$.  There is a correspondence between infinite paths in this tree and elements of $\Gamma$. Each infinite path $\{\lambda_n\in\Gamma_n\}_{n\in\N}$ of nodes of the tree defines a dominant weight $\lambda\in\Lambda^+$, since $\lambda_m|_{\ga_n} = \lambda_n$ for $m>n$.  Because $\Gamma$ is closed in the projective limit topology on $\Lambda^+$, it follows that $\lambda\in\Gamma$.  Similarly, each dominant weight $\lambda$ in $\Gamma$ defines a path $\{\lambda|_{\ga_n}\in\Gamma_n\}_{n\in\N}$ in the tree.  Hence, if $\lambda$ is a node of $\Gamma$, then the cylinder set $\Gamma^\lambda$ corresponds to the set of all infinite paths in the tree which pass through the node $\lambda$.

As usual, we give each tree set $\Gamma \subseteq \Lambda^+$ the Borel $\sigma$-algebra, which is generated by node sets.  We can use $\Gamma$ to define a measurable family of Hilbert spaces $\lambda\mapsto\cH_\lambda$  over $\lambda\in\Gamma$. For each $\lambda\in\Gamma$, consider the representation $(\pi_\lambda,\cH_\lambda)$ of $U_\infty$ with highest-weight $\lambda$. For each such representation, pick out a unit highest-weight vector $v_\lambda\in\cH_\lambda$.  

To tie these Hilbert spaces together in a measurable way, we consider the family $\{s_g|g\in U_\infty\}$ of maps $s_g:\Gamma \rightarrow \dot{\bigcup}_{\lambda \in \Gamma} \cH_\lambda$ given by $s_g(\lambda) = \pi_\lambda(g)v_\lambda$.  Now choose a countable dense subset $E\subseteq U_\infty$ (recall that $U_\infty = \varinjlim U_n$ is separable) and consider the countable family
\[
 \{s_g|g\in E\}
\]
of sections.
  We shall use this family as a measurable frame for our family of Hilbert spaces.  Hence, we need to show that 
\begin{equation}
   \label{frameinnerproduct}
   \lambda \mapsto \langle s_g(\lambda), s_h(\lambda)\rangle = \langle \pi_\lambda(g)v_\lambda,\pi_\lambda(h)v_\lambda\rangle 
\end{equation}
is $\gB$-measurable for each $g, h \in E$.  Suppose that $g,h\in U_n$ for some $n\in\N$.  Then the representation of $U_n$ on $\langle\pi_\lambda(U_n)v_\lambda\rangle$ is equivalent to $\pi_{\lambda|_{\ga_n}}$ for each $\lambda$.  Thus the map in (\ref{frameinnerproduct}) is constant on each node set $\Gamma^{\lambda|_{\ga_n}}$ where $\lambda\in\Gamma$ and is hence $\gB$-measurable.  Finally, note that $\langle \{ s_g(\lambda) = \pi_\lambda(g)v_\lambda |g\in E \} \rangle$ is dense in $\cH_\lambda$ since $\pi_\lambda$ is irreducible and $E$ is dense in $U_\infty$.  Thus, $\lambda \mapsto \cH_\lambda$ is a measurable field of Hilbert spaces.

Next, we note that $s_g$ is a measurable section for all $g\in U_\infty$.  In fact, every $g\in U_\infty$ is a limit of a sequence $\{g_i\}_{i\in N} \subseteq E$.  Hence, we have that 
\[
\lambda\mapsto \langle s_g(\lambda),s_h(\lambda) \rangle  = \lim_{i\rightarrow \infty} \langle s_{g_i}(\lambda), s_h(\lambda) \rangle 
\]
is a measurable function for all $h\in E$, so that $s_g$ is a measurable section.

In order to construct a direct integral of representations $(\pi_\lambda, \cH_\lambda)$ over $\lambda\in\Gamma$, we still need a suitable choice of measure on $(\Gamma,\gB)$.  In particular, we need to choose a finite measure whose support is all of $\Gamma$ (we will refer to such measures as having {\bf full support}).

Given a finite Borel measure $\mu$ on $\Gamma$ of full support, we may consider the direct integral $\cH = \int^\oplus_\Gamma \cH_\mu d\mu(\lambda)$.  Elements of this direct integral consist of measurable sections $x : \lambda \mapsto x(\lambda)$ of the field $\lambda\mapsto\cH_\lambda$ such that the norm given by $||x||^2 = \int_\Gamma ||x(\lambda)||^2_{\cH_\lambda} d\mu(\lambda)$ is finite. 

Our next task is to show that $\lambda \rightarrow \pi_\lambda$ is a $\mu$-measurable family of representations.  Let $x\in\cH$, and fix $g$ in $U_\infty$.  We need to show that $\lambda \stackrel{\pi(x)}{\mapsto} \pi_\lambda(g)x(\lambda)$ is in $\cH$. Now 
\begin{eqnarray*}
\lambda \mapsto \langle \pi_\lambda(g)x(\lambda), s_h(\lambda)\rangle & = & \langle \pi_\lambda(g)x(\lambda), \pi_\lambda(h)v_\lambda\rangle \\
          & = & \langle x(\lambda), \pi_\lambda(g^{-1}h)v_\lambda\rangle \\
          & = & \langle x(\lambda), s_{g^{-1}h}(\lambda) \rangle 
\end{eqnarray*}
is measurable for all $h$ in $U_\infty$ since $x$ is a measurable section of $\lambda \mapsto \cH_\lambda$.  Thus $\lambda \stackrel{\pi(g)x}{\mapsto} \pi_\lambda(g)x(\lambda)$ is a measurable section of $\lambda \mapsto \cH_\lambda$.  Furthermore, since each $\pi_\lambda$ is unitary, it follows  that $||\pi(g)x||_\cH=||x||_\cH < \infty$ .  Hence $\pi = \int^\bigoplus_\Gamma \pi_\lambda d\mu(\lambda)$ is a unitary representation of $U_\infty$.  Our next task is to show that $\pi$ is conical and classify all of its conical vectors.

\begin{theorem}
%\marginpar{Is this proposition a special case of 8.8.2 (page 181) in the Dixmier $C^*$-Algebras book?}
\label{conicalvectorsexhausted}
Let $\Gamma$ be a tree set and let $\mu$ be a finite Borel measure of full support on $\Gamma$.  Consider the representation 
\[
(\pi,\cH) \equiv \left(\int^\bigoplus_\Gamma \pi_\lambda d\mu(\lambda), \int^\bigoplus_\Gamma \cH_\lambda d\mu(\lambda)\right)
\]
and suppose that $w$ is any nonzero vector in $\cH$.  Then $w$ generates a unitary conical representation of $U_\infty$ if and only if there is $f\in L^2(\Gamma,\mu)$ such that $w = \int_\Gamma^\oplus f(\lambda)v_\lambda d\mu(\lambda)$. 

In particular, $\pi$ is a conical representation with conical vector $v=\int_\Gamma^\oplus v_\lambda d\mu(\lambda)$.
\end{theorem}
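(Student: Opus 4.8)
The plan is to prove the two implications separately and then deduce that $\pi$ itself is conical. I will use repeatedly that $\pi=\int_\Gamma^\oplus\pi_\lambda\,d\mu(\lambda)$ and $\pi|_{U_n}=\int_\Gamma^\oplus\pi_\lambda|_{U_n}\,d\mu(\lambda)$ act fiberwise, that $\Gamma_n$ is finite for each $n$ (as $\Gamma$ is a tree set), and that for $g\in U_n$ and $\lambda$ in the node set of $\lambda|_{\ga_n}$ the vector $\pi_\lambda(g)v_\lambda$ lies in $\langle\pi_\lambda(U_n)v_\lambda\rangle$, which by Theorem~\ref{irreduciblerestriction} is the finite-dimensional irreducible $U_n$-module of highest weight $\lambda|_{\ga_n}$, under an identification that sends $v_\lambda$ to a highest-weight vector and that depends on $\lambda$ only through $\lambda|_{\ga_n}$. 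Since $\lambda|_{\ga_n}\in\Lambda^+(\gu_n,\ga_n)$, that module is spherical, so by the Cartan--Helgason Theorem (\ref{CartanHelgason}) its highest-weight vector is fixed by $M_nN_n$.

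For the ``if'' direction, let $w=\int_\Gamma^\oplus f(\lambda)v_\lambda\,d\mu(\lambda)$ with $0\neq f\in L^2(\Gamma,\mu)$. For fixed $n$, the map sending a tuple $(x_\nu)_{\nu\in\Gamma_n}$ into $\cH$ by $(x_\nu)_\nu\mapsto[\lambda\mapsto f(\lambda)\,x_{\lambda|_{\ga_n}}]$ (using the identifications above) is a bounded $U_n$-intertwiner from a finite-dimensional module whose image contains $\langle\pi(U_n)w\rangle$, so $w\in\cH^{\mathrm{fin}}$. By functoriality of Weyl's Unitary Trick (Theorem~\ref{WeylInfiniteTrick}, Lemma~\ref{intertwiningDifferential}) this intertwiner is also $G_n$-equivariant, and since $w$ is the image of the tuple of highest-weight vectors, each of which is $M_nN_n$-fixed, we get $\pi(M_nN_n)w=w$. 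As $w\neq0$, the subrepresentation generated by $w$ is conical with conical vector $w$. Taking $f\equiv1$, which lies in $L^2(\Gamma,\mu)$ because $\mu$ is finite, shows in particular that $v=\int_\Gamma^\oplus v_\lambda\,d\mu(\lambda)$ is locally finite and $M_nN_n$-fixed for every $n$.

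For the ``only if'' direction, let $w\neq0$ generate a conical representation, so $w\in\cH^{\mathrm{fin}}$ and $\pi(M_nN_n)w=w$ for all $n$. Fix $n$ and decompose $w=\sum_\mu w^{(\mu)}$ into its $U_n$-isotypic components inside the finite-dimensional $G_n$-module $\langle\pi(U_n)w\rangle$; only finitely many are nonzero, and each isotypic projection is decomposable. Lemma~\ref{irreducible_primary}, applied to the primary components, shows that every nonzero $w^{(\mu)}$ is a highest-weight vector of an irreducible spherical $U_n$-submodule $V^{(\mu)}=\langle\pi(U_n)w^{(\mu)}\rangle$, hence $M_nN_n$-fixed. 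For a.e.\ $\lambda$ the evaluation $x\mapsto x(\lambda)$ restricts to a $U_n$-intertwiner of the finite-dimensional module $V^{(\mu)}$ into $\cH_\lambda$, and is therefore $G_n$-equivariant for the analytic continuations; so $w^{(\mu)}(\lambda)$, and thus $w(\lambda)$, lies in $(\cH_\lambda)^{M_nN_n}$ for a.e.\ $\lambda$. Intersecting over $n\in\N$ (a countable union of null sets) and invoking admissibility, i.e.\ $M_\infty N_\infty=\bigcup_n M_nN_n$, gives $w(\lambda)\in(\cH_\lambda)^{M_\infty N_\infty}$ for a.e.\ $\lambda$. But $\cH_\lambda$ is the irreducible conical representation $\pi_\lambda$, so Theorem~\ref{irreduciblehighestweightprop}(4) forces $(\cH_\lambda)^{M_\infty N_\infty}=\C v_\lambda$; hence $w(\lambda)=f(\lambda)v_\lambda$ a.e.\ with $f(\lambda)=\langle w(\lambda),v_\lambda\rangle$ measurable and $\|f\|_{L^2}^2\leq\int_\Gamma\|w(\lambda)\|^2\,d\mu(\lambda)=\|w\|^2<\infty$, so $w=\int_\Gamma^\oplus f(\lambda)v_\lambda\,d\mu(\lambda)$ with $f\in L^2(\Gamma,\mu)$.

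Finally, to conclude that $\pi$ itself is conical I must check that $v=\int_\Gamma^\oplus v_\lambda\,d\mu(\lambda)$ is cyclic. Writing $v_\nu=\int_{\Gamma^\nu}^\oplus v_\lambda\,d\mu(\lambda)$ for the $U_n$-isotypic projection of $v$, local finiteness of $v$ gives $\langle\pi(U_n)v\rangle=\bigoplus_{\nu\in\Gamma_n}\langle\pi(U_n)v_\nu\rangle$, and because $v_\nu$ is supported on $\Gamma^\nu$ with $\pi_\nu$ irreducible, $\langle\pi(U_n)v_\nu\rangle$ is precisely the space of sections supported on $\Gamma^\nu$ that are ``locally of type $\nu$'' under the above identifications. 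Letting $n$ vary, $\bigcup_n\langle\pi(U_n)v\rangle$ therefore contains all locally constant sections adapted to node sets of every level, and these are dense in $\cH$: fiberwise $\bigcup_n\langle\pi_\lambda(U_n)v_\lambda\rangle$ is dense in $\cH_\lambda$, while the node sets generate the Borel $\sigma$-algebra of $\Gamma$, so the associated simple sections are dense in the direct integral. Hence $v$ is a conical vector for $\pi$. The main obstacle is the fiberwise transfer in the ``only if'' direction --- deducing $\pi_\lambda(M_nN_n)w(\lambda)=w(\lambda)$ for a.e.\ $\lambda$ from the single global identity $\pi(M_nN_n)w=w$ --- which rests on the fact that the analytic continuation of $\pi|_{U_n}$ to $G_n$, restricted to a finite-dimensional $U_n$-invariant subspace, respects the direct-integral decomposition; this is exactly where functoriality of Weyl's Unitary Trick is essential, together with the one-dimensionality in Theorem~\ref{irreduciblehighestweightprop}(4).
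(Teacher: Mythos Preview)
Your proof is correct and, for the ``only if'' direction, takes a genuinely different route from the paper's. The paper first shows that $\langle\pi_\lambda(U_n)w(\lambda)\rangle$ is finite-dimensional for a.e.\ $\lambda$ via an explicit Gram--Schmidt measurability argument, and then treats $M_n$ and $N_n$ separately: invariance under $M_n\subset K_n\subset U_n$ transfers fiberwise because $\pi(m)$ is decomposable, while for $N_n$ the paper passes to the Lie algebra $\gn_n$ and invokes a result of Arnal on differentiating direct integrals of unitary representations. You sidestep both technicalities at once. Since the $U_n$-isotypic projections $P_\mu=d_\mu\int_{U_n}\overline{\chi_\mu(g)}\,\pi(g)\,dg$ are strong-operator integrals of decomposable operators and hence decomposable, the components $w^{(\mu)}$ are themselves direct-integral sections; evaluation at $\lambda$ on the finite-dimensional $U_n$-module $V^{(\mu)}=\langle\pi(U_n)w^{(\mu)}\rangle$ is then a $U_n$-intertwiner for a.e.\ $\lambda$ (using a countable dense subset of $U_n$ and continuity), and the finite-dimensional Weyl trick upgrades it to a $G_n$-intertwiner, so $M_nN_n$-invariance of the highest-weight vector $w^{(\mu)}$ transfers to $w^{(\mu)}(\lambda)$ directly. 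This is cleaner, automatically yields local finiteness of $w(\lambda)$ (needed to invoke part~(4) of Theorem~\ref{irreduciblehighestweightprop}), and avoids the external citation. Your cyclicity argument for $v$ is essentially the content of the paper's subsequent Lemma~\ref{lemmaMultiplication} and Theorem~\ref{theoremConicalSpan}; the paper defers this point, so within Theorem~\ref{conicalvectorsexhausted} it only establishes that $v$ generates a conical \emph{sub}representation, with cyclicity for all of $\cH$ following from the next theorem.
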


\begin{proof}
($\Rightarrow$) Suppose that $w$ is a conical vector for a subrepresentation of $\pi$ and fix $n$ in $\N$.  Because conical representations are by definition locally finite, we see that $V_n \equiv \langle\pi(U_n)w\rangle$ is finite-dimensional, say with dimension~$d$.  We must show that $w(\lambda)$ is a conical vector in $\cH_\lambda$ for almost all $\lambda\in\Gamma$.  Our first task is to show that $V_n(\lambda) = \langle \pi(U_n)w(\lambda)\rangle$ is finite-dimensional for almost all $\lambda\in\Gamma$. 

Write $d = \dim V_n$. Fix an orthonormal basis $w_1,\ldots w_d$ for $V_n$ and write \[W(\lambda) = \langle w_1(\lambda),\ldots w_d(\lambda)\rangle.\]  We will show that $W(\lambda) = V_n(\lambda)$ (and hence $\operatorname{dim} V_n(\lambda) \leq d$) for almost all $\lambda$, and it will follow that $\dim V_n(\lambda) \leq d$ for almost all $\lambda$.  
Apply a Gram-Schmidt orthonormalization process to the collection $w_1(\lambda),\ldots,w_d(\lambda)$ for each $\lambda$.  We then obtain a collection $\widetilde{w}_1(\lambda),\ldots,\widetilde{w}_d(\lambda)$ with the property that $\langle \widetilde{w}_i(\lambda), \widetilde{w}_j(\lambda)\rangle = 0$ for $i\neq j$ and $\langle \widetilde{w}_i(\lambda),\widetilde{w}_i(\lambda) \rangle \in \{0,1\}$.  %\marginpar{Is this readable?}
 One can show that $\lambda \mapsto \widetilde{w}_i(\lambda)$ is measurable and thus that $\widetilde{w}_i\in\cH$ for each $i$.

Now $W(\lambda) = V_n(\lambda)$ if and only if $\pi(g)w(\lambda) \in W(\lambda)$ for all $g$ in $U_\infty$.  Choose a countable dense subset $\{g_n\}_{n\in\N}$ in $U_\infty$ (one notes that $U_\infty$ is separable because it is a countable direct union of separable spaces). By the strong continuity of $\pi$, we see that $W(\lambda) = V_n(\lambda)$ if and only if $\pi(g_m)w(\lambda) \in W(\lambda)$ for all $m$ in $\N$ (recall that $W(\lambda)$ is closed because it is finite-dimensional).  
In turn, this happens exactly when $\pi(g_m)w(\lambda)$ is equal to its orthogonal projection onto $W(\lambda)$.  In other words, $W(\lambda) = V_n(\lambda)$ if and only if $F_m(\lambda) = 0$ for all $m\in\N$, where $F_m$ is the non-negative measurable function on $\Gamma$ defined by 
\begin{equation*}
F_m: \lambda \mapsto ||\pi(g_m)w(\lambda)||^2 - \sum_{i=1}^d | \langle \pi(g_m)w(\lambda),\widetilde{w}_i(\lambda) \rangle|^2.
\end{equation*}
for all $m\in\N$. 

Write $A = \{\lambda\in\Gamma | W(\lambda)\neq V_n(\lambda)\}$ and $A_m = \{\lambda\in\Gamma | \pi(g_m) w(\lambda) \notin W(\lambda)\}$.  Then $A = \bigcup_{m\in\N} A_m$.  Furthermore, $A_m$ is measurable for each $m$ since $A_m = F_m^{-1}(0)$ and $F_m$ is a measurable function.

   Suppose that it is not true that $W(\lambda) = V_n(\lambda)$ for almost all $\lambda$ in $\Gamma$.  Then $\mu(A)>0$.  Since $A = \bigcup_{m\in\N} A_m$, it follows that $\mu(A_m) > 0$ for some $m$.  Since $\pi(g_m)w(\lambda)\notin W(\lambda)$ for all $\lambda\in A_m$, we see that $\pi(g_m)w\notin\langle w_1,\ldots,w_d \rangle$, which contradicts the assumption that $w_1,\ldots, w_d$ is a basis for $V_n = \langle \pi(g_m) w \rangle$.  Therefore, $W(\lambda) = V_n(\lambda)$  (and, in particular, $\operatorname{dim}V_n(\lambda)\leq d$) for almost all $\lambda$.   
In particular, $w(\lambda)$ is $U_n$-finite for almost all $\lambda\in\Gamma$.

Fix $n\in\N$.  Since $\pi(M_n) w = w$, it follows that $\pi(M_n) w(\lambda) = w(\lambda)$ for almost all $\lambda$. Next, $\pi(\gn_n)w = w$  because $\pi(N_n)w=w$.  In fact, $\pi(X)w = \int_\Gamma^\oplus \pi(X)w(\lambda)d\mu(\lambda)$ for $X \in \gu_n^\C$ by \cite{A}.  Thus $\pi(\gn_n)w(\lambda) = w(\lambda)$ for almost all $\lambda$, from which it follows that $\pi(N_n)w(\lambda) = w(\lambda)$ for almost all $\lambda$.

Since $\pi(M_n N_n) w(\lambda) = w(\lambda)$ for all $n$ and almost all $\lambda\in\Gamma$, it follows from part (4) of  Theorem~\ref{irreduciblehighestweightprop} that for almost all $\lambda$ there is $f(\lambda)\in\C$ such that $w(\lambda) = f(\lambda)v_\lambda$.  Since $\lambda \mapsto f(\lambda) = \langle w(\lambda), v_\lambda \rangle$ is measurable and 
\[
||f||^2 = \int_\Gamma |f(\lambda)|^2 d\mu(\lambda) = \int_\Gamma ||w(\lambda)||^2 d\mu(\lambda) = ||w||^2,
\]
we see that $f\in L^2(\Gamma,\mu)$, as was to be shown.

($\Leftarrow$) Now suppose that $w = \int^\oplus_\Gamma f(\lambda)v_\lambda d\mu(\lambda)$, where $f\in L^2(\Gamma,\mu)$.  We show that $w$ generates a conical representation of $U_\infty$ with highest-weight support $\operatorname{ess\ supp} f$.

Consider $V_n = \langle \pi(U_n) w \rangle$.  We will show that $V_n$ is finite-dimensional.  As before, 
\[
\pi \cong \bigoplus_{\mu\in\Gamma_n} \left( \int^\bigoplus_{\Gamma^\mu} \pi_\lambda d\mu(\lambda) \right).
\]
Write $\displaystyle w = \sum_{\mu\in\Gamma_n} w_\mu$, where $w_\mu = 1_{N_\mu}w \in \int^\bigoplus_{\Gamma^\mu} \cH_\lambda d\mu(\lambda)\subseteq\cH_\Gamma$
for each $\mu$.  

Of course, if $f|_{\Gamma^\mu} =0$, then $w_\mu = 0$.  On the other hand, we claim that if $f|_{\Gamma^\mu}\neq 0$, then $\langle \pi(U_n) w_\mu \rangle \cong_{U_n} \pi_\mu$.  
In fact, 
\[
\sum_{i=1}^k c_i\pi(g_i)w_\mu = \int_{\Gamma^\mu} \sum_{i=1}^k c_i\pi(g_i)f(\lambda)v_\lambda d\mu(\lambda).
\]
where $c_i\in\C$ and $g_i\in U_n$.
Fix $\lambda\in \Gamma^\mu$ such that $f(\lambda)\neq 0$.  Since $\lambda|_{\ga_n}=\mu$, we see % by the definition of $\pi_\lambda = \varinjlim \pi_{\lambda|_{\ga_m}}$ 
that $\langle\pi(U_n)f(\lambda)v_\lambda\rangle$ is $U_n$-isomorphic to $\pi_\mu$.%  by identifying $w(\lambda)=f(\lambda)v_\lambda$ with a highest-weight vector $v_\mu$ for $\pi_\mu$.

Now $\sum_{i=1}^k c_i\pi(g_i)w_\mu  = 0$ in $\cH$ if and only if $\sum_{i=1}^k c_i\pi(g_i)f(\lambda)v_\lambda = 0$ in $\cH_\lambda$ for $\mu$-almost all $\lambda$ in $\Gamma^\mu$.  For any $\lambda$ in $\Gamma^\mu$ such that $f(\lambda) = 0$, it follows automatically that $\sum_{i=1}^k c_i\pi(g_i)f(\lambda)v_\lambda = 0$.  But for any fixed $\lambda$ in $\Gamma^\mu$ such that $f(\lambda)\neq 0$, we see that $\sum_{i=1}^k c_i\pi(g_i)f(\lambda)v_\lambda = 0$ in $\cH_\lambda$ if and only if $\sum_{i=1}^k c_i\pi(g_i)v_\mu = 0$ in $\cH_\mu$.  

Since $f$ is not almost-everywhere zero on $\Gamma^\mu$, we see that  $\sum_{i=1}^k c_i\pi(g_i)w_\mu  = 0$ in $\cH$ if and only if  $\sum_{i=1}^k c_i\pi(g_i)v_\mu = 0$ in $\cH_\mu$.  Hence there is an injective $U_n$-intertwining operator $L:\langle\pi(U_n)w_\mu\rangle\rightarrow \cH_\mu$ with the property that $Lw_\mu = v_\mu$.  Since $\pi_\mu$ is irreducible, it follows that $\langle \pi(U_n) w_\mu \rangle \cong_{U_n} \pi_\mu$, as we wanted to show.

It follows from Lemma~\ref{splittinglemma} that 
\[
\langle \pi(U_n) w \rangle \cong_{U_n} \bigoplus_{\mu\in\Gamma_n \text{ s.t. } w_\mu\neq 0} \langle \pi(U_n) w_\mu \rangle.
\]
Furthermore, since $w = \sum_{\mu\in\Gamma_n} w_\mu$ and each $w_\mu$ is $M_n N_n$-invariant, we see that $w$ is $M_nN_n$-invariant.  Since this holds for all $n$, it follows that $w$ generates a conical subrepresentation of $\pi$.  The fact that this subrepresentation has highest-weight support $\operatorname{ess\ supp} f$ follows from the fact that $w_\mu = 0$ if and only if $f|_{\Gamma^\mu} = 0$ (recall that $w_\mu$ is the projection of $w$ onto the $\mu$-isotypic vectors in $\cH$).
\end{proof}

In fact, for each conical vector identified by the previous theorem, it is possible to describe the subrepresentation that it generates.  We first remind the reader that the \textbf{essential support} of a function $f:\Gamma\rightarrow \C$ is defined to be the complement in $\Gamma$ of the union of all open sets on which $f$ vanishes $\mu$-almost everywhere.  That is, $\operatorname{ess\ supp} f = \Gamma \backslash \bigcup \{A \subseteq \Gamma | A \text{ is open and } f|A = 0 \text{ a.e.}\}$.  

\begin{theorem}
\label{theoremConicalSpan}
 As before, let $\Gamma$ be a tree set, let $\mu$ be a finite Borel measure of full support on $\Gamma$, and consider the representation 
$(\pi,\cH) \equiv \left(\int^\bigoplus_\Gamma \pi_\lambda d\mu(\lambda), \int^\bigoplus_\Gamma \cH_\lambda d\mu(\lambda)\right)$.

Suppose that $f\in L^2(\Gamma,\mu)$, and put $w = \int_\Gamma^\oplus f(\lambda)v_\lambda d\mu(\lambda)$.  Then the conical representation generated by $w$ has highest-weight support equal to $\operatorname{ess\ supp} f$ and
\[
\overline{\langle \pi(U_\infty) w \rangle} = \int^\bigoplus_{\Gamma\backslash f^{-1}(0)} \cH_\lambda d\mu(\lambda)
\]
\end{theorem}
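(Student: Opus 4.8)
The plan is to dispose of the statement about highest-weight support first and then prove the two inclusions in the displayed identity separately. That the conical representation generated by $w$ has highest-weight support $\operatorname{ess\ supp} f$ is exactly what was shown at the end of the proof of the ``$\Leftarrow$'' direction of Theorem~\ref{conicalvectorsexhausted} (using that $w_\mu = 1_{\Gamma^\mu}w$ vanishes precisely when $f|_{\Gamma^\mu}=0$ $\mu$-a.e.), so nothing new is needed there. Writing $V=\overline{\langle\pi(U_\infty)w\rangle}$ and $\cH_0=\int^\bigoplus_{\Gamma\setminus f^{-1}(0)}\cH_\lambda\,d\mu(\lambda)$ — understood as the closed subspace of $\cH$ consisting of sections vanishing $\mu$-a.e.\ on $f^{-1}(0)$, which depends only on $\{f=0\}$ up to a $\mu$-null set — the inclusion $V\subseteq\cH_0$ is the easy one: every element $\sum_i c_i\pi(g_i)w$ of $\langle\pi(U_\infty)w\rangle$ has $\lambda$-component $f(\lambda)\sum_i c_i\pi_\lambda(g_i)v_\lambda$, which is $0$ whenever $f(\lambda)=0$, and the property of vanishing $\mu$-a.e.\ on $f^{-1}(0)$ is preserved under limits in $\cH$; hence $V\subseteq\cH_0$.

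The heart of the argument is the reverse inclusion, and the key step is to prove that $V$ is invariant under the algebra $L^\infty(\Gamma,\mu)$ of diagonalizable operators on $\cH$. For this I would first note that for every $n$ and every node $\mu\in\Gamma_n$ the vector $w_\mu=1_{\Gamma^\mu}w$ lies in $V$: indeed, by the decomposition $\langle\pi(U_n)w\rangle=\bigoplus_{\mu\in\Gamma_n}\langle\pi(U_n)w_\mu\rangle$ obtained in the proof of Theorem~\ref{conicalvectorsexhausted} via Lemma~\ref{splittinglemma}, we have $w_\mu\in\langle\pi(U_n)w_\mu\rangle\subseteq\langle\pi(U_n)w\rangle\subseteq V$. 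Taking finite disjoint unions of cylinder sets then gives $1_B w\in V$ for every $B$ in the algebra generated by the cylinder sets of $\Gamma$; since this algebra generates the Borel $\sigma$-algebra of $\Gamma$ and the finite measure $|f|^2\,d\mu$ is absolutely continuous with respect to $\mu$, approximating an arbitrary Borel set $A$ by algebra elements in $\mu$-measure yields $1_A w\in V$. Because every diagonalizable operator commutes with each $\pi(g)$ and $V$ is a closed $\pi(U_\infty)$-invariant subspace, it follows that $1_A V\subseteq V$ for every Borel $A$, and then — approximating bounded measurable functions uniformly by simple functions — that $\phi V\subseteq V$ for all $\phi\in L^\infty(\Gamma,\mu)$.

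Granting this, I would conclude as follows. Suppose $y\in\cH_0$ with $y\perp V$; the goal is $y=0$. Since $V$ is invariant under both $\pi(U_\infty)$ and $L^\infty(\Gamma,\mu)$ and $\pi$ is unitary, the same holds for $V^\perp$; in particular $1_A y\perp\pi(g)w$ for every Borel $A\subseteq\Gamma$ and every $g\in U_\infty$. Expanding the inner product, this says that the measurable function $\lambda\mapsto\overline{f(\lambda)}\,\langle y(\lambda),\pi_\lambda(g)v_\lambda\rangle$ has vanishing integral over every Borel set, hence vanishes $\mu$-a.e., for each fixed $g$. Running $g$ over a countable dense subset of the separable group $U_\infty$ and using the continuity of $g\mapsto\pi_\lambda(g)v_\lambda$ together with the irreducibility of $\pi_\lambda$ (so that $\{\pi_\lambda(g)v_\lambda:g\in U_\infty\}$ spans a dense subspace of $\cH_\lambda$), I would get $y(\lambda)=0$ for $\mu$-a.e.\ $\lambda$ with $f(\lambda)\neq 0$. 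Combined with $y\in\cH_0$, this forces $y=0$, so $\cH_0\subseteq V$, completing the proof.

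I expect the $L^\infty(\Gamma,\mu)$-invariance of $V$ to be the main obstacle: it is what lets one separate the fibers of the direct integral, and it relies on the fiberwise decomposition of $\langle\pi(U_n)w\rangle$ from Theorem~\ref{conicalvectorsexhausted} followed by a bootstrap through Borel approximation. The concluding argument is then a routine separability-and-irreducibility computation. One should also keep in mind that $\Gamma$, being a second-countable compact Hausdorff space, is compact metrizable (indeed a standard Borel space), so that the measure-theoretic approximations invoked above are legitimate.
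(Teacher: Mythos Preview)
Your proof is correct and follows essentially the same route as the paper: the paper isolates the key step as a separate Lemma~\ref{lemmaMultiplication} (that $hw\in\overline{\langle\pi(U_\infty)w\rangle}$ for all $h\in L^\infty(\Gamma,\mu)$), proved exactly as you do via $1_{\Gamma^\mu}w\in\langle\pi(U_n)w\rangle$ and Borel approximation, and then uses it to show that any $x\perp V$ must vanish on $\{f\neq 0\}$. The only cosmetic difference is that where you test against $1_A\,\pi(g)w$ over all Borel $A$ to force the integrand to vanish a.e., the paper instead multiplies by the phase function $h(\lambda)=\overline{\langle x(\lambda),\pi_\lambda(g)f(\lambda)v_\lambda\rangle}/|\langle x(\lambda),\pi_\lambda(g)f(\lambda)v_\lambda\rangle|$ to turn the integral into $\int_\Gamma|\langle x(\lambda),\pi_\lambda(g)f(\lambda)v_\lambda\rangle|\,d\mu(\lambda)=0$; both devices yield the same pointwise conclusion, and your explicit use of a countable dense subset of $U_\infty$ is a detail the paper leaves implicit.
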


\begin{proof}
It suffices to show that 
\[
\overline{\langle \pi(U_\infty) w \rangle}^\perp = \int^\bigoplus_{f^{-1}(0)} \cH_\lambda d\mu(\lambda).
\]
One direction of containment is clear: for any  $x\in\overline{\langle \pi(U_\infty) w \rangle}$, we see that $x(\lambda) = 0$ for almost all $\lambda$ such that $f(\lambda)=0$ (since $w(\lambda)\equiv f(\lambda)v_\lambda=0$ if and only if $f(\lambda)=0$).  Hence, if $y\in\cH$ such that $y|_{\Gamma\backslash f^{-1}(0)} = 0$, then $\langle x,y \rangle = \int_\Gamma \langle x(\lambda),y(\lambda) \rangle d\mu(\lambda) = 0$.  In other words, $\int^\bigoplus_{f^{-1}(0)} \cH_\lambda d\mu(\lambda) \subseteq \overline{\langle \pi(U_\infty) w \rangle}^\perp$.

Now suppose that $x\perp\overline{\langle\pi(U_\infty)w\rangle}$.  In Lemma~\ref{lemmaMultiplication}, we will show that $hw\in\overline{\langle \pi(U_\infty) w \rangle}$ for all $h\in L^\infty(\Gamma,\mu)$. We define $h\in L^\infty(\Gamma,\mu)$ by \[h(\lambda) = \frac{\overline{\langle x(\lambda),\pi_\lambda(g)f(\lambda)v_\lambda \rangle}}{|\langle x(\lambda),\pi_\lambda(g)f(\lambda)v_\lambda \rangle|}.\]    Then

\[
0   = \langle x,  \pi(g) hw\rangle 
    = \int_\Gamma |\langle x(\lambda),\pi_\lambda(g)f(\lambda)v_\lambda \rangle| 
 d\mu(\lambda).
\]
for all $g$.  Hence, for almost all $\lambda$, $\langle x(\lambda),\pi_\lambda(g)f(\lambda)v_\lambda \rangle=0$ for all $g\in U_\infty$.  It follows that, for almost all $\lambda$, either $x(\lambda)=0$ or $f(\lambda)=0$.  Hence, $x(\lambda)=0$ for almost all $\lambda$ such that $f(\lambda)\neq 0$.  In other words, $x\in \int^\bigoplus_{f^{-1}(0)} \cH_\lambda d\mu(\lambda)$, and we are therefore done.
\end{proof}

We now prove the lemma that we used in the proof of Theorem~\ref{theoremConicalSpan}:
\begin{lemma}
\label{lemmaMultiplication}
Suppose that $f\in L^2(\Gamma,\mu)$ and put $w = \int_\Gamma^\oplus f(\lambda)v_\lambda d\mu(\lambda)$.  Then  $hw\in\overline{\langle \pi(U_\infty) w \rangle}$ for all $h\in L^\infty(\Gamma,\mu)$. 
\end{lemma}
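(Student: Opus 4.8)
The plan is to reduce the statement to an approximation argument inside a suitable weighted $L^2$-space. First I would fix notation: for $h\in L^\infty(\Gamma,\mu)$, the symbol $hw$ denotes the image of $w$ under the bounded multiplication operator $M_h$ on the direct integral $\cH=\int^\bigoplus_\Gamma\cH_\lambda\,d\mu(\lambda)$, so that $(hw)(\lambda)=h(\lambda)f(\lambda)v_\lambda$ for $\mu$-almost every $\lambda$. This is well defined in $\cH$, since changing $h$ on a $\mu$-null set changes $hw$ only on a $\mu$-null set.

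The first genuine step is to show that the indicator $1_{\Gamma^\mu}$ of a single node set already carries $w$ into $\overline{\langle\pi(U_\infty)w\rangle}$. For this I would quote the work already done inside the proof of Theorem~\ref{conicalvectorsexhausted}: writing $w_\mu=1_{\Gamma^\mu}w$ for $\mu\in\Gamma_n$, one has $w=\sum_{\mu\in\Gamma_n}w_\mu$, and for each $\mu$ with $w_\mu\neq 0$ the subspace $\langle\pi(U_n)w_\mu\rangle$ is $U_n$-isomorphic to the irreducible representation $\pi_\mu$ of $U_n$ with highest weight $\mu$. Since distinct nodes at level $n$ give inequivalent irreducible representations of $U_n$, Lemma~\ref{splittinglemma}, applied to the compact group $U_n$ and the finite family $\{w_\mu : \mu\in\Gamma_n,\ w_\mu\neq 0\}$, yields $\langle\pi(U_n)w\rangle=\bigoplus_\mu\langle\pi(U_n)w_\mu\rangle$ as an internal orthogonal sum; in particular each $w_\mu$ lies in $\langle\pi(U_n)w\rangle\subseteq\overline{\langle\pi(U_\infty)w\rangle}$. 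By linearity, $hw\in\overline{\langle\pi(U_\infty)w\rangle}$ for every locally constant $h$ on $\Gamma$, that is, every finite linear combination of node indicators.

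Finally I would run the approximation. Set $\nu=|f|^2\mu$, a finite Borel measure on $\Gamma$; since each $v_\lambda$ is a unit vector, $\|hw\|_\cH=\|h\|_{L^2(\Gamma,\nu)}$, so $h\mapsto hw$ is norm-preserving, hence continuous, from $L^2(\Gamma,\nu)$ into $\cH$, and $L^\infty(\Gamma,\mu)$ embeds into $L^2(\Gamma,\nu)$ because $\nu$ is finite and absolutely continuous with respect to $\mu$. It then remains to note that the locally constant functions are dense in $L^2(\Gamma,\nu)$: the space $\Gamma$ is compact and metrizable (compact, Hausdorff and second countable), so $C(\Gamma)$ is dense in $L^2(\Gamma,\nu)$, and the locally constant functions form a conjugation-closed subalgebra of $C(\Gamma)$ that contains the constants and separates the points of the totally disconnected space $\Gamma$, hence is uniformly dense in $C(\Gamma)$ by Stone--Weierstrass (alternatively, the node partitions generate $\gB$ and refine, so node-measurable simple functions are already $L^2$-dense). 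Approximating a given $h\in L^\infty(\Gamma,\mu)$ by locally constant $h_k$ in $L^2(\Gamma,\nu)$, we get $h_kw\to hw$ in $\cH$ with each $h_kw$ in the closed subspace $\overline{\langle\pi(U_\infty)w\rangle}$, whence $hw\in\overline{\langle\pi(U_\infty)w\rangle}$. The only real subtlety — and the step I would regard as the crux — is the first one, that node indicators preserve the cyclic subspace; but since that is essentially already contained in the proof of the preceding theorem, the argument here is mostly bookkeeping together with the (small but essential) idea of passing to the auxiliary measure $\nu$ that makes multiplication continuous.
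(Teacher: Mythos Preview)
Your proof is correct. The first step---showing that node indicators carry $w$ into $\langle\pi(U_n)w\rangle$---is essentially the paper's own first step, just packaged differently: the paper explicitly builds scalars $c_i$ and group elements $g_i\in U_n$ so that $\sum c_i\pi(g_i)$ realizes the $U_n$-isotypic projection onto the $\mu$-component, whereas you obtain that same projection by invoking Lemma~\ref{splittinglemma} together with the irreducibility $\langle\pi(U_n)w_\mu\rangle\cong\pi_\mu$ already verified inside the proof of Theorem~\ref{conicalvectorsexhausted}.

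The approximation step is where you genuinely diverge. The paper climbs a four-rung ladder: node sets $\to$ open sets (as countable disjoint unions of node sets) $\to$ arbitrary Borel sets (by outer regularity of $\mu$) $\to$ simple functions $\to$ $L^\infty$, at each stage checking that the relevant functions times $f$ converge in $L^2(\Gamma,\mu)$. Your introduction of the auxiliary measure $\nu=|f|^2\mu$ collapses all of this to a single density statement: $h\mapsto hw$ is an isometry $L^2(\Gamma,\nu)\to\cH$, and locally constant functions are $L^2(\nu)$-dense via Stone--Weierstrass on the compact totally disconnected space $\Gamma$. Your route is cleaner and makes the continuity of multiplication transparent from the outset; the paper's route is more elementary in that it avoids Stone--Weierstrass and the metrizability of $\Gamma$, relying only on the measure-theoretic structure of the node $\sigma$-algebra.
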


\begin{proof}
We begin by showing that $1_{\Gamma^\mu}w\in\langle\pi(U_\infty)w\rangle$ for every node set $\Gamma^\mu$.  As before, we choose $c_1,\ldots,c_d\in\C$ and $g_1,\ldots,g_d\in U_\infty$ such that 
$\sum_{i=1}^k c_i \pi_\mu(g_i) v_\mu = v_\mu$ and $\sum_{i=1}^k c_i \pi_\nu(g_i) v_\nu = 0$ for all $\nu\neq\mu$ in $\Gamma_n$.  We claim that $1_{\Gamma^\mu}w = \sum_{i=1}^k c_i\pi_\mu(g_i) w$.  If $f(\lambda) = 0$, then $w(\lambda) =0$ and hence equality holds automatically.  On the other hand, if $f(\lambda)\neq 0$, then recall that $\langle\pi(U_n) w\rangle$ is equivalent to $\pi_{\lambda|_{\ga_n}}$ by identifying $w(\lambda)=f(\lambda)v_\lambda$ with $v_{\lambda|_{\ga_n}}$.  Hence $\sum_{i=1}^k c_i \pi_\mu(g_i)v_\lambda = v_\mu$ if $\lambda|_{\ga_n} = \mu$ (i.e., if $\lambda\in \Gamma^\mu$) and $\sum_{i=1}^k c_i \pi_\mu(g_i)v_\lambda = 0$ otherwise.  Thus $1_{\Gamma^\mu}w = \sum_{i=1}^k c_i\pi_\mu(g_i) w$ and so $1_{\Gamma^\mu}w\in\langle\pi(U_\infty)w\rangle$.

%%%%%%%%%%%%%
Next we see that $1_A w\in \overline{\langle\pi(U_\infty)w\rangle}$ for all open sets $A$ in $\Gamma$.  Every open set $A$ can be written as a disjoint union $A = \bigcup_{i=1}^\infty N_i$ of node sets.  Write $A_n = \bigcup_{i=1}^n N_i$ for each $n$ and note that $1_{A_n} = \sum_{i=1}^k 1_{N_i}$ is in $\langle\pi(U_\infty)v\rangle$ by the previous paragraph.  One then sees that 
\[\textstyle
\int^\oplus_\Gamma 1_{A_n}(\lambda)f(\lambda) v_\lambda d\mu(\lambda) = 1_{A_n} w \rightarrow 1_A w = \int^\oplus_\Gamma 1_{A}(\lambda)f(\lambda) v_\lambda d\mu(\lambda)\] in $\cH$ since $1_{A_n} f\rightarrow 1_A f$ in $L^2(\Gamma,\mu)$.
  Thus $1_A v\in \overline{\langle\pi(U_\infty)v\rangle}$.

Next we show that $1_B v\in \overline{\langle\pi(U_\infty)v\rangle}$ for every Borel set $B$ in $\Gamma$.  This follows since
\[
\begin{split}
\mu(B) & = \inf \left\{\left.\mu\left(\bigcup_{i=1}^\infty F_i\right) \right| B\subseteq\bigcup_{i=1}^\infty F_i\text{ and } F_i\in\gF \right\} \\
       & = \inf \{\mu(A)|B\subseteq A \text{ and $A$ open}\}. \\
\end{split}
\]
Thus $1_Bf$ can be approximated in $L^2(\Gamma,\mu)$ by a sequence $1_{A_n}f$ given by open sets $A_n$, so that $1_{A_n}w\rightarrow 1_Bw$ in $\cH$.  Hence $1_B w\in \overline{\langle\pi(U_\infty)w\rangle}$.

Finally, note that if $h_n\rightarrow h$ in $L^\infty(\Gamma,\mu)$, then $h_n f\rightarrow hf$ in $L^2(\Gamma,\mu)$ and hence $h_n w\rightarrow hw$ in $\cH_\Gamma$.  Because the measurable simple functions are dense in $L^\infty(\Gamma,\mu)$ (recall that $\mu$ is a finite measure), we see that $hw\in\overline{\langle\pi(U_\infty)w\rangle}$ for all $h\in L^\infty(\Gamma,\mu)$.  

%%%%%%%%%%%%%

\end{proof}

Finally, we show that every unitary conical representation of $U_\infty$ disintegrates into a direct integral of highest-weight representations.

\begin{theorem}

\label{disintegration}
%\marginpar{Is this theorem a special case of 8.8.6 (page 187) in the Dixmier $C^*$-Algebras book?}
Suppose that $(\pi, \cH)$ is a unitary conical representation of $U_\infty$ and $w\in\cH\backslash\{0\}$ is a conical vector.  Then there is a unique Borel measure $\mu$ on its highest-weight support $\Gamma(\pi)$ such that
there is a unitary intertwining operator \[U: \cH \rightarrow \int_{\Gamma(\pi)}^\oplus \cH_\lambda d\mu(\lambda)\] defined by $U w = \int_{\Gamma(\pi)}^\oplus v_\lambda d\mu(\lambda)$.
\end{theorem}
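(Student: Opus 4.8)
The plan is to construct the measure $\mu$ directly from the conical vector $w$, to build the intertwiner $U$ on the dense cyclic subspace $\langle\pi(U_\infty)w\rangle$, and to make the whole argument turn on a single positive-definite-function identity. For each $n$, write $w=\sum_{\lambda\in\Gamma_n(\pi)}w_\lambda$ for the decomposition of $w$ into $U_n$-isotypic components, and recall from Theorem~\ref{irreduciblehighestweightprop} (and Lemma~\ref{irreducible_primary}) that the subspaces $\langle\pi(U_n)w_\lambda\rangle$ are mutually orthogonal, irreducible, and carry $w_\lambda$ as a highest-weight vector of weight $\lambda$. Define finite measures $\mu_n$ on the finite set $\Gamma_n(\pi)$ by $\mu_n(\{\lambda\})=\|w_\lambda\|^2$. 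Because the $U_{n+1}$-isotypic decomposition refines the $U_n$-one orthogonally (Theorems~\ref{irreduciblehighestweightprop} and~\ref{irreduciblerestriction} give $w_\lambda=\sum_{\nu|_{\ga_n}=\lambda}w_\nu$ with orthogonal summands), one has $\|w_\lambda\|^2=\sum_{\nu\in\Gamma_{n+1}(\pi),\,\nu|_{\ga_n}=\lambda}\|w_\nu\|^2$, so $\{\mu_n\}$ is a consistent family for the projective system $\Gamma(\pi)=\varprojlim\Gamma_n(\pi)$. By the Kolmogorov extension theorem it arises from a finite Borel measure $\mu$ on $\Gamma(\pi)$ with $\mu(\Gamma^\lambda)=\|w_\lambda\|^2$ for every node $\lambda$ and total mass $\|w\|^2$; since $\lambda\in\Gamma_n(\pi)$ means precisely $w_\lambda\neq0$, the measure $\mu$ has full support, so Theorem~\ref{conicalvectorsexhausted} applies to it.

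With $\mu$ fixed, form $\int^{\oplus}_{\Gamma(\pi)}\cH_\lambda\,d\mu(\lambda)$ using the measurable field and the sections $s_g(\lambda)=\pi_\lambda(g)v_\lambda$ set up before Theorem~\ref{conicalvectorsexhausted}, and write $\varphi_\lambda(g)=\langle\pi_\lambda(g)v_\lambda,v_\lambda\rangle$. The key step is the identity
\[
\langle\pi(g)w,w\rangle_{\cH}=\int_{\Gamma(\pi)}\varphi_\lambda(g)\,d\mu(\lambda)\qquad(g\in U_\infty),
\]
which it suffices to check for $g\in U_n$, each $n$. On the left, orthogonality of the distinct $U_n$-isotypic pieces gives $\langle\pi(g)w,w\rangle=\sum_{\lambda\in\Gamma_n(\pi)}\langle\pi(g)w_\lambda,w_\lambda\rangle$, and since $\langle\pi(U_n)w_\lambda\rangle$ is a unitary copy of the irreducible $U_n$-module of highest weight $\lambda$ in which $w_\lambda$ is a highest-weight vector, necessarily $\langle\pi(g)w_\lambda,w_\lambda\rangle=\|w_\lambda\|^2\langle\pi_\lambda(g)v_\lambda,v_\lambda\rangle$. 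On the right, Theorem~\ref{irreduciblerestriction} shows that $\langle\pi_\lambda(U_n)v_\lambda\rangle\cong_{U_n}\cH_{\lambda|_{\ga_n}}$ with $v_\lambda$ a highest-weight vector, so $\varphi_\lambda(g)$ for $g\in U_n$ depends only on $\lambda|_{\ga_n}$; integrating over cylinder sets then gives $\int\varphi_\lambda(g)\,d\mu(\lambda)=\sum_{\lambda\in\Gamma_n(\pi)}\|w_\lambda\|^2\langle\pi_\lambda(g)v_\lambda,v_\lambda\rangle$, matching the left-hand side. Granting this, define $U$ on $\langle\pi(U_\infty)w\rangle$ by $U\big(\sum_i c_i\pi(g_i)w\big)=\sum_i c_i s_{g_i}=\int^{\oplus}\sum_i c_i\pi_\lambda(g_i)v_\lambda\,d\mu(\lambda)$. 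Expanding $\big\|\sum_i c_i\pi(g_i)w\big\|^2$ and $\big\|\sum_i c_i s_{g_i}\big\|^2$ bilinearly, the identity applied to each $g_j^{-1}g_i$ shows these are equal, so $U$ is well defined and isometric and extends to an isometry $U\colon\cH\to\int^{\oplus}\cH_\lambda\,d\mu(\lambda)$ with $Uw=s_e=\int^{\oplus}v_\lambda\,d\mu(\lambda)$. The intertwining relation $U\pi(h)=\big(\int^{\oplus}\pi_\lambda(h)\,d\mu\big)U$ holds on $\pi(U_\infty)w$ because $U\pi(h)\pi(g)w=s_{hg}$ and $\pi_\lambda(h)s_g(\lambda)=s_{hg}(\lambda)$, hence on all of $\cH$. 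Finally, $\operatorname{ran}U$ is closed and contains every $s_g$; by Theorem~\ref{conicalvectorsexhausted} the vector $v=\int^{\oplus}v_\lambda\,d\mu$ is a cyclic conical vector for $\int^{\oplus}\pi_\lambda\,d\mu$ and $\pi_\lambda(g)v_\lambda=s_g(\lambda)$, so $\{s_g:g\in U_\infty\}$ spans a dense subspace and $U$ is unitary.

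For uniqueness, suppose $\mu'$ is another finite Borel measure on $\Gamma(\pi)$ admitting a unitary intertwiner $U'$ with $U'w=\int^{\oplus}v_\lambda\,d\mu'$. Then $\langle\pi(g)w,w\rangle=\langle U'\pi(g)w,U'w\rangle=\int_{\Gamma(\pi)}\varphi_\lambda(g)\,d\mu'(\lambda)$ for all $g$, so by the identity above $\int\varphi_\lambda(g)\,d\mu=\int\varphi_\lambda(g)\,d\mu'$ for all $g$; restricting to $g\in U_n$ and using that $\varphi_\lambda(g)$ factors through $\lambda\mapsto\lambda|_{\ga_n}$ this becomes $\sum_{\lambda\in\Gamma_n(\pi)}\mu(\Gamma^\lambda)\varphi_\lambda(g)=\sum_{\lambda\in\Gamma_n(\pi)}\mu'(\Gamma^\lambda)\varphi_\lambda(g)$ as functions of $g\in U_n$. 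The diagonal matrix coefficients $\varphi_\lambda$, $\lambda\in\Gamma_n(\pi)$, belong to pairwise inequivalent irreducible representations of $U_n$ and are therefore orthogonal in $L^2(U_n)$ by Schur orthogonality, hence linearly independent; thus $\mu(\Gamma^\lambda)=\mu'(\Gamma^\lambda)$ for every node, and since the cylinder sets generate the Borel $\sigma$-algebra of $\Gamma(\pi)$, $\mu=\mu'$. The main obstacle is the positive-definite identity together with the measure construction it rests on: one must correctly identify the scalars relating $w_\lambda$, $v_\lambda$, and the relevant highest-weight vectors — where Lemma~\ref{irreducible_primary} and Theorem~\ref{irreduciblerestriction} do the real work — and verify countable additivity of the cylinder-set measure (automatic here since the node sets are compact); everything after that is formal direct-integral bookkeeping.
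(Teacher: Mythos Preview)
Your proof is correct, and the measure construction is identical to the paper's, but the two arguments diverge after that point. The paper builds the intertwiner by invoking Theorem~\ref{almostequivalent}(i) to obtain an (a priori non-unitary) $U_\infty$-equivariant map $L$ between the dense cyclic subspaces sending $w$ to $\widetilde w=\int^\oplus v_\lambda\,d\mu$, and then checks unitarity by verifying $\|\widetilde w_\nu\|^2=\mu(\Gamma^\nu)=\|w_\nu\|^2$ on each irreducible $U_n$-block separately. You instead run a GNS-style argument: you prove the single positive-definite-function identity $\langle\pi(g)w,w\rangle=\int\varphi_\lambda(g)\,d\mu(\lambda)$ and read off that $U$ is isometric in one stroke. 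Your route is more self-contained---it does not need Theorem~\ref{almostequivalent} at all---and makes the underlying mechanism (equality of the two cyclic representations' generating positive-definite functions) transparent; the paper's route, by contrast, shows more explicitly how the intertwiner decomposes along the tree. For uniqueness the paper simply cites standard direct-integral theory from \cite{Dix}, whereas you give a direct argument via Schur orthogonality of the diagonal coefficients $\varphi_\lambda$ on each $U_n$, which is more elementary and independent of any multiplicity-freeness statement.
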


\begin{proof}
Without loss of generality, suppose that $||w||=1$.  We begin by constructing a suitable measure $\mu$.  For each $\lambda$ in $\Gamma_n(\pi)$, define $\mu(\Gamma^\lambda) = ||w_\lambda||^2$.  Observe that $w_\lambda = \sum_{\nu\in\Gamma_m^\lambda} w_\nu$ and hence
\[
\mu(\Gamma^\lambda) = ||w_\lambda||^2 = \sum_{\nu\in\Gamma_m^\lambda(\pi)} ||w_\nu||^2 = \sum_{\nu\in\Gamma_m^\lambda(\pi)} \mu(\Gamma^\lambda).
\]
Similarly, 
\[
\sum_{\nu\in\Gamma_n(\pi)} \mu(\Gamma^\nu) = \sum_{\nu\in\Gamma_n(\pi)} ||w_\nu||^2 = ||w||^2 = 1
\]
Thus $\mu$ extends uniquely to a Borel measure on $\Gamma(\pi)$.  

Consider the representation
\(
(\widetilde{\pi},\widetilde{\cH}) \equiv \left(\int_{\lambda\in\Gamma(\pi)}^\oplus \pi_\lambda d\mu(\lambda), \int_{\lambda\in\Gamma(\cH)}^\oplus \cH_\lambda d\mu(\lambda)\right)
\)
and let  $\widetilde{w} \equiv \int_{\Gamma(\pi)} v_\lambda d\mu(\lambda)$.  Then $\widetilde{\pi}$ is conical with conical vector $\widetilde{w}$ and highest-weight support $\Gamma(\pi)$. We construct a unitary intertwining operator 
$U:\cH\rightarrow\widetilde{\cH}$ such that $Uw = \widetilde{w}$.  

By Theorem~\ref{almostequivalent} (i), there is a $U_\infty$-intertwining operator $L:\langle\pi(U_\infty)w\rangle \rightarrow \langle\widetilde{\pi}(U_\infty)\widetilde{w}\rangle$ given by $Lw = \widetilde{w}$. For each $n$ and each $\nu\in\Gamma_n(\pi)$,  $L$ restricts to an intertwining operator between $\langle\pi(U_n)w_\nu\rangle$ and $\langle\widetilde{\pi}(U_n)\widetilde{w}_\nu\rangle$ such that  $L(w_\nu) = \widetilde{w}_\nu$.  Furthermore,
\[
||\widetilde{w}_\nu||^2 = \int_{\Gamma^\nu} ||\widetilde{w}_\lambda||^2 d\mu(\lambda) = \int_{\Gamma^\nu}1d\mu(\lambda) =  \mu(\Gamma^\nu) = ||w_\nu||^2.
\] 
Hence, $L$ restricts to a unitary operator on $\langle\pi(U_n)w_\nu\rangle$ for every $n$ and every $\nu\in\Gamma_n(\pi)$. Because $\langle\pi(U_\infty)w_\nu\rangle$ and $\langle\pi(U_\infty)\widetilde{w}_\nu\rangle$ are dense in $\cH$ and $\widetilde{\cH}$, respectively, $L$ extends to a unitary intertwining operator from $\cH$ to $\widetilde{\cH}$.

The uniqueness of the measure follows from standard results from direct-integral theory (see \cite{Dix}, for instance).
%Now suppose that $\mu'$ is any Borel measure on $\Gamma(\pi)$ such that the representation
%\(
%(\pi',\cH') \equiv \left(\int_{\lambda\in\Gamma(\pi)}^\oplus \pi_\lambda d\mu'(\lambda), \int_{\lambda\in\Gamma(\cH)}^\oplus \cH_\lambda d\mu'(\lambda)\right)
%\)
%is equivalent to $(\pi,\cH)$ via a unitary intertwining operator $U:\cH\rightarrow\cH'$ such that $Uw = w'$, where $w'=\int_{\Gamma(\pi)} v_\lambda d\mu'(\lambda)$.  Then $Uw_\nu = w'_\nu$ for all $\nu\in\Gamma_n(\pi)$ and all $n\in\N$ by Theorem~\ref{almostequivalent}.  In particular, $||w_\nu|| = ||w'_\nu||$ and so we have that 
%\[
%\mu'(\Gamma^\nu) = \int_{\Gamma^\nu} ||v_\lambda||^2 d\mu'(\lambda) = ||w'_\nu||^2 = ||w_\nu||^2 = \mu(\Gamma^\nu).
%\]
%Since $\mu$ and $\mu'$ agree on all node sets, it follows that $\mu = \mu'$.
\end{proof}

A corollary of Theorems~\ref{conicalvectorsexhausted} and~\ref{disintegration} is that unitary conical representations of $U_\infty$ are multiplicity-free (and thus of type I). 

\begin{corollary}
Every unitary conical representation of $U_\infty$ is multiplicity-free.
%\marginpar{Be slightly careful here because we are dealing with the representation of a group and not directly that of a $C^*$-algebra; but can we just take the $C^*$-algebra generated by $\pi(U_\infty)$ and apply the same machinery?}
\end{corollary}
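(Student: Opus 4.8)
The plan is to identify the commutant $\pi(U_\infty)'$ of an arbitrary unitary conical representation $(\pi,\cH)$ with an abelian von Neumann algebra of diagonalisable operators; since a representation is multiplicity-free precisely when its commutant is abelian, this gives the corollary. Fix a conical vector $w$. By Theorem~\ref{disintegration} there is a finite Borel measure $\mu$ of full support on the tree set $\Gamma(\pi)$ and a unitary equivalence $\cH\cong\int^\oplus_{\Gamma(\pi)}\cH_\lambda\,d\mu(\lambda)$ carrying $w$ to the canonical section $v=\int^\oplus_{\Gamma(\pi)}v_\lambda\,d\mu(\lambda)$, where $v_\lambda$ is the chosen unit highest-weight vector of the irreducible representation $(\pi_\lambda,\cH_\lambda)$. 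We may therefore assume $\pi=\int^\oplus_{\Gamma(\pi)}\pi_\lambda\,d\mu(\lambda)$ with conical vector $v$. For $f\in L^\infty(\Gamma(\pi),\mu)$ write $m_f$ for the diagonalisable operator $x\mapsto\big(\lambda\mapsto f(\lambda)x(\lambda)\big)$. Since each $\pi(g)=\int^\oplus\pi_\lambda(g)\,d\mu(\lambda)$ is decomposable, it commutes with $m_f$; thus $\{m_f:f\in L^\infty(\Gamma(\pi),\mu)\}\subseteq\pi(U_\infty)'$, and the content of the proof is the reverse inclusion.

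So let $T\in\pi(U_\infty)'$. Because $T$ commutes with $\pi(U_n)$, the space $\langle\pi(U_n)(Tv)\rangle=T\langle\pi(U_n)v\rangle$ is finite-dimensional, so $Tv$ is locally finite; moreover $T$ intertwines the finite-dimensional $U_n$-modules $\langle\pi(U_n)v\rangle$ and $\langle\pi(U_n)(Tv)\rangle$, hence their unique holomorphic extensions to $G_n$, so that $\pi(M_nN_n)(Tv)=T\pi(M_nN_n)v=Tv$ for every $n$. If $Tv=0$ then $T$ vanishes on the dense span $\langle\pi(U_\infty)v\rangle$, hence $T=0$; otherwise $Tv$ is a conical vector for the subrepresentation it generates, and Theorem~\ref{conicalvectorsexhausted} yields $f\in L^2(\Gamma(\pi),\mu)$ with $Tv=\int^\oplus f(\lambda)v_\lambda\,d\mu(\lambda)=m_f v$. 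For every $g\in U_\infty$ we then get $T\pi(g)v=\pi(g)(Tv)=\pi(g)(m_f v)=m_f\pi(g)v$, so $T$ and $m_f$ agree on the dense subspace $\langle\pi(U_\infty)v\rangle$.

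The remaining point is that $f$ is essentially bounded, with $\|f\|_\infty\le\|T\|$; once this is known, $m_f$ is bounded and coincides with the bounded operator $T$ on a dense subspace, so $T=m_f$ and we are done. I would prove the bound by contradiction: if $A=\{\lambda:|f(\lambda)|\ge\|T\|+\varepsilon\}$ had positive measure for some $\varepsilon>0$, then by Lemma~\ref{lemmaMultiplication} (applied to the conical vector $v$, i.e.\ with the defining function $\equiv 1$) the vector $1_Av$ lies in $\overline{\langle\pi(U_\infty)v\rangle}$; choosing $x_j\in\langle\pi(U_\infty)v\rangle$ with $x_j\to 1_Av$ and using $Tx_j=m_fx_j$ together with a subsequence argument identifies $T(1_Av)$ with the section $\lambda\mapsto f(\lambda)1_A(\lambda)v_\lambda$, whence $(\|T\|+\varepsilon)^2\mu(A)\le\|T(1_Av)\|^2\le\|T\|^2\|1_Av\|^2=\|T\|^2\mu(A)$, a contradiction. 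Hence $\pi(U_\infty)'=\{m_f:f\in L^\infty(\Gamma(\pi),\mu)\}$, which is commutative, so $\pi$ is multiplicity-free.

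The main obstacle is the identification $T=m_f$: one must check with care that $Tv$ is a bona fide conical vector — in particular that an operator commuting with the compact-group action $\pi(U_n)$ automatically respects the analytically continued $G_n$-action entering the condition $\pi(M_nN_n)v=v$ — and then promote the a priori merely fibrewise equality $Tv=m_fv$ at the single cyclic vector to an operator identity on all of $\cH$, which is exactly what the $L^\infty$-bound on $f$ accomplishes. The rest is bookkeeping assembling Theorems~\ref{disintegration} and~\ref{conicalvectorsexhausted} and Lemma~\ref{lemmaMultiplication}.
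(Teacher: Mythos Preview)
Your proof is correct and follows essentially the same approach as the paper: both use the disintegration of Theorem~\ref{disintegration} together with Theorem~\ref{conicalvectorsexhausted} to show that every intertwining operator acts as a multiplication operator $m_f$ on the cyclic vector, hence everywhere, so the commutant is abelian. You are more careful than the paper in two places it leaves implicit---verifying that $Tv$ is genuinely a conical vector (via the holomorphic extension of the finite-dimensional $U_n$-action) and proving the $L^\infty$ bound on $f$ needed to extend $T=m_f$ from the dense subspace to all of $\cH$.
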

\begin{proof}
Let
\(
(\pi,\cH) \equiv \left(\int\nolimits^\bigoplus_\Gamma \pi_\lambda d\mu(\lambda), \int\nolimits^\bigoplus_\Gamma \cH_\lambda d\mu(\lambda)\right)
\)
 be a conical representation and suppose that $L:\cH\rightarrow\cH$ is a $U_\infty$-intertwining operator.  Consider the conical vector $v = \int_\Gamma^\oplus v_\lambda d\mu(\lambda)$.  Then $Lv$ is a conical vector for a subrepresentation of $\pi$ and can thus be written $Lv = fv$ for some $f\in L^2(\Gamma,\mu)$.  It follows that 
\[
L(\pi(g)v) = \pi(g) (fv) = \int^\oplus_\Gamma \pi(g)f(\lambda) v_\lambda d\mu(\lambda) = f\pi(g)v
\]
for all $g\in U_\infty$ and hence $Ly=fy$ for all $y\in \cH$. In other words, intertwining operators for $\pi$ may be identified with multiplier operators, and thus the ring of intertwining operators for $\pi$ is commutative.   Hence $\pi$ is multiplicity-free.
\end{proof}

As promised before, we now show that there are typically a very large number of inequivalent conical representations of $U_\infty$ with a given highest-weight support $\Gamma$.  By Theorem~\ref{conicalvectorsexhausted}, this problem is equivalent to finding Borel measures with full support on $\Gamma$ that are absolutely discontinuous with respect to each other. 

One way to define a measure $\mu_{\text{rec}}$ on $\Gamma$ is as follows: we assign $\mu_\text{rec}(\Gamma^\nu) = \frac{1}{\#\Gamma_1}$ for each $\nu$ in $\Gamma_1$.  Then, for $\nu\in\Gamma_{n+1}$, recursively define $\mu_\text{rec}(\Gamma^\nu) = \frac{1}{\#\Gamma_{n+1}^\lambda}\mu_\text{rec}(\Gamma^\lambda)$ if $\lambda\in\Gamma_n$ and $\nu\in\Gamma^\lambda$.  One can see quite easily that the atoms of $\mu_\mathrm{rec}$ are precisely the isolated points of the topological space $\Gamma$; all other singleton sets will have measure zero.
  
We now show that for any point $x$ in $\Gamma$ we can construct a Borel measure $\mu_x$ of full support on $\Gamma$ whose atoms are precisely the isolated points of $\Gamma$ and $x$.  Thus, if $x\neq y$ are non-isolated points in $\Gamma$, then $\mu_x$, $\mu_y$, and $\mu_\mathrm{rec}$ lie in distinct measure classes since their null sets do not agree: 
\[
\begin{array}{ll}
\mu_x(\{x\}) > 0, & \mu_x(\{y\}) = 0 \\
\mu_y(\{x\}) = 0, & \mu_y(\{y\}) > 0 \\
\mu_\mathrm{rec}(\{x\}) = 0, & \mu_\mathrm{rec}(\{y\}) = 0
\end{array}
\]

There are many ways to construct $\mu_x$ given $x\in\Gamma$, but we shall use the following method, which involves a simple modification to the recursively uniform measure.  For $\lambda\in\Gamma_1$, define $\mu_x(\Gamma^\lambda)=\frac{3}{4}$ if $x|_{\ga_n} = \lambda$ and $\mu_x(\Gamma^\lambda) = \left(\frac{1}{\#\Gamma_1-1}\right)\frac{1}{4}$ otherwise.  
Next suppose that $\mu_x(\Gamma^\nu)$ has been defined for all $\nu\in\Gamma_{n}$.  For $\lambda\in\Gamma_{n+1}$, we define
\[
\mu_x(\Gamma^\lambda) = \left\lbrace
                    \begin{array}{ll}
                      \frac{1}{2}+\frac{1}{2^{n+1}} 
                            & \textrm{if } x\in\Gamma^\lambda\\
                      \left(\frac{1}{2}-\frac{1}{2^{n+1}}\right)
                            \frac{1}{(\#\Gamma_n^{\lambda|_{\ga_n}})-1} 
                            & \textrm{if } x\notin \Gamma^\lambda
                            \textrm{ and } x \in \Gamma^{\lambda|_{\ga_n}}\\
                      \frac{1}{\#\Gamma_n^{\lambda|_{\ga_n}}}\mu(\Gamma^{\lambda|_{\ga_n}}) & \textrm{otherwise,}
                    \end{array}
                   \right.
\]
where, as before, $\Gamma_n^\nu = \{\gamma\in \Gamma_{n} |\textrm{ } \gamma|_{\ga_{n}}=\nu\}$. We have thus recursively defined a countably additive Borel measure $\mu_x$ on $\Gamma$.  Note that $\mu_x$ has full support on $\Gamma$ because $\mu_x(\Gamma^\lambda)>0$ for every open basis set $\Gamma^\lambda\subseteq \Gamma$.   Furthermore, one can easily check that $\mu_x(\{x\}) = \frac{1}{2}$ and that $\mu_x(\{y\}) = 0$ if $y\neq x$ and $y$ is not an isolated point of $\Gamma$.

\section{Closing Remarks and Further Research}
We have managed to prove several results for the unitary conical representations of $U_\infty$, including the classification of unitary smooth conical representations, which generalize the finite-dimensional conical representations of finite-dimensional symmetric spaces.  However, the question remains of whether it is possible to construct unitary conical representations of $G_\infty$.  The most likely approach would be to construct a sort of unitary spherical principal series representation, perhaps by a direct limit of unitary principal series representations.  See also \cite{W2012} for one approach to constructing an analogue of the principal series for direct-limit groups.

Several questions about harmonic analysis on the symmetric space $G_\infty/K_\infty$ and $G_\infty/M_\infty N_\infty$ remain.  While neither of these infinite-dimensional spaces possess $G_\infty$-invariant measures, there is a possibility of constructing $G_\infty$-invariant measures on larger spaces.  We briefly overview this construction now.

Consider a direct system $\{G_n\}_{n\in\N}$ of Lie groups and suppose that there are measurable (not necessarily continuous) projections $p_n:G_{n+1} \rightarrow G_n$ such that $p_n$ is $G_n$-equivariant and $p_n(g) = g$ for $g\in G_n$.  In other words, one has a projective system of $\sigma$-algebras dual to the direct system of groups.  The resulting projective-limit space $\overline{G_\infty} = \varprojlim G_n$ is acted on by the direct-limit group $G_\infty = \varinjlim G_n$.  Each group $G_n$ possesses a $G_n$-quasi-invariant probability measure $\mu_n$.  

It is then possible to define a projective-limit probability measure $\mu_\infty = \varprojlim \mu_n$ on $\overline{G_\infty}$ using Kolmogorov's theorem.  If this measure is quasi-invariant under the action of $G_\infty$ on $\overline{G_\infty}$ then it is possible to define a unitary ``regular representation'' of $G_\infty$ on $L^2(\overline{G_\infty},\mu_\infty)$.  This ``regular representation'' can then be decomposed into irreducible representations.

In fact, precisely this scheme was used by Doug Pickrell in \cite{Pi1987a} to study analysis on an infinite-dimensional Grassmannian space and later by Olshanski and Borodin in \cite{BOl} to develop a theory of harmonic analysis on the infinite-dimensional unitary group $U(\infty)$.  The role played by probability theory in the latter context was crucial.  In fact, the problem was shown to be related to the study of infinite point processes.  Most intriguingly, probabilistic models from statistical mechanics appeared in the decomposition.

It would be interesting to consider a similar  analysis on the infinite-dimensional symmetric space $G_\infty/K_\infty$ and the horocycle space $G_\infty/M_\infty N_\infty$.  That is, one would construct projective-limit spaces $\overline{G_\infty/K_\infty}$ and $\overline{G_\infty/M_\infty N_\infty}$ which possess $G_\infty$-quasi-invariant measures.  
The problem, then, would be to decompose the corresponding unitary representations of $G_\infty$ on $L^2(\overline{G_\infty/K_\infty})$ and $L^2(\overline{G_\infty/M_\infty N_\infty})$ into irreducible subrepresentations.  One interesting question is whether those representations decompose into direct integrals of  unitary spherical and conical representations of $G_\infty$, respectively.

Also of interest is whether a sort of Radon transform may be constructed between functions on $G_\infty/K_\infty$ and functions on $G_\infty/M_\infty N_\infty$.  In fact, for spaces of regular functions this has been done in the recent paper \cite{HO}.  However, it would be interesting if it were possible to develop a Hilbert space analogue of the Radon transform, perhaps mapping between functions in $L^2(\overline{G_\infty/K_\infty})$ and functions  in $L^2(\overline{G_\infty/M_\infty N_\infty})$.
\normalsize

\section{Appendix:Admissiblility of Classical Direct Limits}
The aim of this appendix is to show that each classical example is admissible.  For the explicit matrix realizations of the compact-type Riemannian symmetric spaces, see \cite[p. 446, 451--455]{He1}.

The classical propagated direct systems of Riemannian symmetric spaces are listed in Table~\ref{classicalSymmetricPairs}, where each row gives a noncompact-type symmetric space $G_n/K_n$ and its simply-connected compact dual space $U_n/K_n$, and where the restricted roots exhibit the Dynkin diagram $\Psi_n$.  For each row, the limit $G_\infty/K_\infty = \varinjlim G_n/K_n$ is propagated and also that it is possible to choose Cartan subalgebras of $U_n$ for each $n\in\N$ so that $U_\infty = \varinjlim U_n$ is a propagated direct-limit group  (see, for instance, \cite[Section 2]{OW2013} or \cite[Section 3]{W2011}).

Note that in each row of Table~\ref{classicalSymmetricPairs}, the symmetric space $U_n/K_n$ is simply-connected.  However, in certain rows the group $U_n$ is not simply-connected.  We may remove this obstruction simply by passing to the universal cover $\widetilde{U_n}$ of $U_n$.  In fact, that the involution $\theta_n$ on $\gu_n$ integrates to an involution $\widetilde{\theta_n}$ on $\widetilde{U_n}$.  Denote the fixed-point subgroup for $\widetilde{\theta_n}$ in $\widetilde{U_n}$ by  $\widetilde{K_n}$.  By simply-connectedness all of the inclusions on the Lie algebra level integrate to inclusions on the group level, so that $\widetilde{U_n}/\widetilde{K_n}$ forms a propagated direct system of compact-type symmetric spaces.  Furthermore, one sees that if $p:\widetilde{U_n}\rightarrow U_n$ is the covering map, then $p\left(\widetilde{K_n}\right)\subseteq K_n$.  Hence $p$ factors to a covering map from $\widetilde{U_n}/\widetilde{K_n}$ to $U_n/K_n$ (see \cite[p. 213]{He1}).  Since $U_n/K_n$ is already simply-connected, we see that $\widetilde{U_n}/\widetilde{K_n}$ is diffeomorphic to $U_n/K_n$.  

{\footnotesize
\begin{equation}\label{classicalSymmetricPairs}
\begin{tabular}{|c|l|l|l|c|} \hline
% \multicolumn{5}{| c |}{Table I}\\
\multicolumn{5}{|c|}{}\\
\multicolumn{5}{| c |}
{Classical direct systems of irreducible Riemannian symmetric spaces}\\
\multicolumn{5}{|c|}{}\\
\hline \hline
\multicolumn{1}{|c}{} &  \multicolumn{1}{|c}{$G_n$} &
        \multicolumn{1}{|c}{$U_n$} &
        \multicolumn{1}{|c}{$K_n$} &     
        \multicolumn{1}{|c|}{$\Psi_n$}  \\ \hline \hline
$1$ &  $\SL(n,\C)$           & $\SU (n)\times \SU(n)$                                        & $\diag\, \SU(n)$                   & $A_{n-1}$  \\ \hline
$2$ &  $\Spin	(2n+1,\C)$         & $\begin{matrix}\Spin (2n+1)\times\\ \Spin (2n+1)\end{matrix}$ & $\diag\, \Spin (2n+1)$             & $B_n$      \\ \hline
$3$ &  $\Spin(2n,\C)$          & $\begin{matrix}\Spin (2n)\times\\ \Spin (2n)\end{matrix}$     & $\diag\, \Spin(2n)$                & $D_n$      \\ \hline
$4$ &  $\Sp(n,\C)$           & $\Sp (n)\times \Sp (n)$                                       & $\diag\, \Sp (n)$                  & $C_n$      \\ \hline
$5_1$ &  $\SU(p,n-p)$            & $\SU(n)$                                                      & $\mathrm{S}(\U (p)\times \U (n-p))$ & $C_p$      \\ \hline
$5_2$ &  $\SU(n,n)$            & $\SU(2n)$                                                      & $\mathrm{S}(\U (n)\times \U (n))$ & $C_n$      \\ \hline
$6_1$ &  $\SO_0(p,n-p)$            & $\SO (n)$                                                     & $\SO (p) \times \SO (n-p)$           & $B_p$      \\ \hline
$6_2$ &  $\SO_0(n,n)$            & $\SO (2n)$                                                     & $\SO (n) \times \SO (n)$           & $B_n$      \\ \hline
$7_1$ & $\Sp(p,n-p)$            & $\Sp (n)$                                                     & $\Sp (p) \times \Sp (n-p)$           & $C_p$      \\ \hline
$7_2$ & $\Sp(n,n)$            & $\Sp (2n)$                                                     & $\Sp (n) \times \Sp (n)$           & $C_n$      \\ \hline

$8$ &  $\SL(n,\R)$           & $\SU (n)$                                                     & $\SO (n)$                          & $A_{n-1}$  \\ \hline
$9$ &  $\SL(n,\H)$            & $\SU (2n)$                                                    & $\Sp (n)$                          & $A_{n-1}$  \\ \hline
$10_1$& $\SO^*(4n)$           & $\SO (4n)$                                                    & $\U (2n)$                          & $C_{n}$    \\ \hline
$10_2$& $\SO^*(2(2n+1))$       & $\SO (2(2n+1))$                                               & $\U (2n+1)$                        & $C_{n}$    \\ \hline
$11$ & $\Sp(n,\R)$           & $\Sp (n)$                                                     & $\U (n)$                           & $C_n$      \\ \hline
\end{tabular}
\end{equation}
}

%We recall from Section 2 the Cartan decompositions $\gg_n = \gk_n \oplus \gp_n$ and $\gu_n = \gk_n \oplus i\gp_n$ and the maximal abelian subalgebra $\ga_n$ of $\gp_n$.  Because each row gives a propagated direct system, we known that $\ga_n\subseteq \ga_{n+1}$ and that either $\Psi_{n+1}=\Psi_n$ or $\Psi_{n+1}$ extends $\Psi_n$ by adding roots only on the lefthand side of the Dynkin diagram.

\subsection{A General Strategy for Proving Admissibility}
The embedding $G_n\hookrightarrow G_{n+1}$ takes the form
\begin{equation}
\label{inclusionTypeOne}
A \mapsto \left(\begin{array}{ccc}
      I &   & \\
        & A & \\
        &   & I
                \end{array}\right)
\end{equation}
for the systems in rows $5_2$, $6_2$, and $7_2$. 
In all other cases in Table~\ref{classicalSymmetricPairs}, the embedding $G_n\hookrightarrow G_{n+1}$ takes the form 
\begin{equation}
\label{inclusionTypeTwo}
A \mapsto \left(\begin{array}{cc}
          A & \\
            & I
                \end{array}\right),
\end{equation}
where $I$ is a $1\times 1$, $2\times 2$, or $4 \times 4$ identity matrix. 

Suppose we can choose  $\ga_n$ for each $n$ in such a way that 
\begin{equation}
\label{embedFormOne}
\ga_{n+1}\subseteq \left(\begin{array}{ccc}
                          * &   0   & * \\
                          0 & \ga_n & 0\\
                          * &   0   & *
                         \end{array}\right)
\end{equation}
or
\begin{equation}
\label{embedFormTwo}
\ga_{n+1}\subseteq \left(\begin{array}{cc}
                          \ga_n &  0\\
                            0   & *
                         \end{array}\right)
\end{equation}
(depending on the type of embedding $G_n\hookrightarrow G_{n+1}$).  In this case, since $\ga_n$ commutes with $M_n = Z_{K_n}(\ga_n)$ by definition, it follows from (\ref{embedFormOne}) and (\ref{embedFormTwo}) that $\ga_{n+1}$ commutes with
\[
  M_n\cong         \left(\begin{array}{ccc}
                          I &   0   & 0 \\
                          0 &  M_n & 0\\
                          0 &   0   & I
                         \end{array}\right)  
\]
or
\[
   M_n\cong              \left(\begin{array}{cc}
                            M_n &  0\\
                            0   &  I
                         \end{array}\right),
\]
respectively, depending on the type of embedding $G_n\hookrightarrow G_{n+1}$.  In other words, $M_n \leq Z_{K_{n+1}}(\ga_{n+1}) = M_{n+1}$

Hence, in order to prove that a propagated direct limit is admissible, it is sufficient to show that either (\ref{embedFormOne}) or (\ref{embedFormTwo}) holds.  In most cases, our proof of admissibility will take this form.

\subsection{$U_n = L_n \times L_n$ and $K_n = \diag\ L_n$} 
This case corresponds to the first four rows in Table~\ref{classicalSymmetricPairs}.  In this case, one sees that 

\begin{align*}
  \gu_n & = \gl_n \times \gl_n \\
  \gk_n & = \{(X,X)\in \gu_n |X\in\gl_n \} \\
  i\gp_n & = \{(X,-X)\in\gu_n | X\in\gl_n\}.
\end{align*}  Furthermore, if we fix a Cartan subalgebra $\gh_n\subseteq\gl_n$ for each $n$, then we can choose 
\[
   i\ga_n = \{(X,-X)\in\gu_n|X\in\gh_n\}.
\]

   Now suppose that $g\in L_n$ and that $(g,g)\in M_n = Z_{K_n}(\ga_n)$.  Then $g\in Z_{L_n}(\gh_n)$; that is, $g$ centralizes the Cartan subalgebra $\gh_n$ of $\gl_n$.  Since $K_n$ is connected, it follows that $g\in H_n \equiv \exp(\gh_n)$.  Thus $M_n = \diag\ H_n$ for each $n$.  It follows that $M_k\leq M_n$ for $k\leq n$.

\subsection{$\Rank(G_\infty/K_\infty) \equiv \dim \ga_\infty< \infty$} 
This case was already discussed in \cite{HO} and corresponds to rows $5_1$, $6_1$, and $7_1$ in Table~\ref{classicalSymmetricPairs}. If $\dim \ga_\infty < \infty$, then for $k$ large enough, one has $\ga_k = \ga_\infty$.  Suppose $k\leq n$ and $g\in M_k$.  That is, $g\in K_k$ and $g$ centralizes $\ga_k$.  But $\ga_k=\ga_n = \ga_\infty$ and $K_k\leq K_n$.  Thus $g\in M_n$.

\subsection{$\Rank(G_n / K_n) = \Rank(G_n)$ for all $n\in\N$}
This case corresponds to rows 8 and 11 in Table~\ref{classicalSymmetricPairs}.   One has that $\ga_n$ is a Cartan subalgebra for $\gg_n$.  In particular, $Z_{\gg_n}(\ga_n) = \ga_n$ and so $\gm_n = \{0\}$ for all $n\in\N$. 

For example, if we let $G_n = \SL(n,\R)$ and $K_n=\SO(n)$ and make the standard choice of $\ga_n = \{\diag(a_1,\ldots,a_n) |a_i\in\R\}$, then one has $M_n = \{\diag(\pm 1, \ldots, \pm 1)\}$.  Thus $M_k\leq M_n$ for $k\leq n$.

\subsection{$U_n / K_n = \SU(2n)/S(\SU(n) \times \SU(n))$}
This case corresponds to row $5_2$ in Table~\ref{classicalSymmetricPairs}.  One has $\gg_n = \su(n,n)$, $\gu_n = \su(2n)$, and $\gk_n = \mathfrak{s}(\su(n)\oplus\su(n))$.  The involution is given by $\theta_n: A \mapsto J_nAJ_n^{-1}$, where 
\[
J_n =\left(
           \begin{array}{cc}
             I_n & \\
                 & -I_n
            \end{array} 
   \right).
\]

More explicitly, one has
\begin{align*}
          \gu_n &  = \left\{\left.
                           \left(\begin{array}{cc}
                            A & B \\
                            -B^* & D
                           \end{array}\right)
                           \in\mathrm{M}(2n,\C)
                           \right|
                           \begin{array}{l}
                           A^* = -A\text{, } D^* = -D \text{,} \\
                            \text{and } \Tr(A) + \Tr(D) = 0
                           \end{array}
                      \right\}
         \\
           \gk_n & = \left\{\left.
                           \left(\begin{array}{cc}
                            A & 0 \\
                            0 & D
                           \end{array}\right)
                           \in\mathrm{M}(2n,\C)
                           \right|\begin{array}{l}
                           A^* = -A\text{, } D^* = -D \text{,} \\
                            \text{and }\Tr(A) + \Tr(D) = 0
                           \end{array}
                      \right\}
          \\       
          i\gp_n  & =  \left\{%\left.
                           \left(\begin{array}{cc}
                            0 & B \\
                            -B^* & 0
                           \end{array}\right)
                           \in\mathrm{M}(2n,\C)
                           %\right|
                           %B\in\mathrm{M}(n,\C)
                      \right\}.
\end{align*}          

We choose 
\[
   i\ga_n =\left\{\left.\left(
           \begin{array}{ccc|ccc}
                  & & &      &        & a_n \\ 
                  & & &      & \udots &   \\
                  & & &  a_1 & &\\
             \hline
                           &        & -a_1 & & &\\
                           & \udots &      & & & \\   
             -a_n          &        &      & & & 
            \end{array} 
   \right)\right|a_i\in\R \right\}
\]
Thus condition~(\ref{embedFormOne}) is satisfied and so $G_\infty/K_\infty$ is admissible.

\subsection{$U_n / K_n = \SO(2n)/(\SO(n) \times \SO(n))$}
This case corresponds to row $6_2$ in Table~\ref{classicalSymmetricPairs}.  One has $\gg_n = \so(n,n)$, $\gu_n = \so(2n)$, and $\gk_n = \so(n)\oplus\so(n)$.  The involution is given by $\theta_n: A \mapsto J_nAJ_n^{-1}$, where 
\[
J_n =\left(
           \begin{array}{cc}
             I_n & \\
                 & -I_n
            \end{array} 
   \right).
\]

More explicitly, one has
\begin{align*}
          \gu_n &  = \left\{\left.
                           \left(\begin{array}{cc}
                            A & B \\
                            -B^\mathrm{T} & D
                           \end{array}\right)
                           \in\mathrm{M}(2n,\R)
                           \right|
                           A^\mathrm{T} = -A\text{ and } D^\mathrm{T} = -D                          
                      \right\}
         \\
           \gk_n & = \left\{\left.
                           \left(\begin{array}{cc}
                            A & 0 \\
                            0 & D
                           \end{array}\right)
                           \in\mathrm{M}(2n,\R)
                           \right|
                           A^\mathrm{T} = -A\text{ and } D^\mathrm{T} = -D                       
                      \right\}
          \\       
          i\gp_n  & =  \left\{%\left.
                           \left(\begin{array}{cc}
                            0 & B \\
                            -B^\mathrm{T} & 0
                           \end{array}\right)
                           \in\mathrm{M}(2n,\R)
                           %\right|
                           %B\in\mathrm{M}(n,\R)
                      \right\}.
\end{align*}   

We choose 
\[
   i\ga_n =\left\{\left.\left(
           \begin{array}{ccc|ccc}
                  & & &      &        & a_n \\ 
                  & & &      & \udots &   \\
                  & & &  a_1 & &\\
             \hline
                           &        & -a_1 & & &\\
                           & \udots &      & & & \\   
             -a_n          &        &      & & & 
            \end{array} 
   \right)\right|a_i\in\R\right\}.
\]
Thus condition (\ref{embedFormTwo}) is satisfied and so $G_\infty/K_\infty$ is admissible.

\subsection{$U_n / K_n = \Sp(2n)/(\Sp(n) \times \Sp(n))$}
This case corresponds to row $7_2$ in Table~\ref{classicalSymmetricPairs}.  One has $\gg_n = \lsp(n,n)$, $\gu_n = \lsp(2n)$, and $\gk_n = \lsp(n)\oplus\lsp(n)$.  The involution is given by $\theta_n: A \mapsto J_nAJ_n^{-1}$, where 
\[
J_n =\left(
           \begin{array}{cc}
             I_n & \\
                 & -I_n
            \end{array} 
   \right).
\]

More explicitly, one has
\begin{align*}
          \gu_n &  = \left\{\left.
                           \left(\begin{array}{cc}
                            A & B \\
                            -B^* & D
                           \end{array}\right)
                           \in\mathrm{M}(2n,\H)
                           \right|
                           A^* = -A\text{ and } D^* = -D                              
                      \right\}
         \\
           \gk_n & = \left\{\left.
                           \left(\begin{array}{cc}
                            A & 0 \\
                            0 & D
                           \end{array}\right)
                           \in\mathrm{M}(2n,\H)
                           \right| 
                           A^* = -A\text{ and } D^* = -D                          
                      \right\}
          \\       
          i\gp_n  & =  \left\{%\left.
                           \left(\begin{array}{cc}
                            0 & B \\
                            -B^* & 0
                           \end{array}\right)
                           \in\mathrm{M}(2n,\H)
                           %\right|
                           %B\in\mathrm{M}(n,\H)
                      \right\}.
\end{align*}          

We choose 
\[
   i\ga_n =\left\{\left.\left(
           \begin{array}{ccc|ccc}
                  & & &      &        & a_n \\ 
                  & & &      & \udots &   \\
                  & & &  a_1 & &\\
             \hline
                           &        & -a_1 & & &\\
                           & \udots &      & & & \\   
             -a_n          &        &      & & & 
            \end{array} 
   \right)\right|a_i\in\R\right\}.
\]
Thus condition~(\ref{embedFormOne}) is satisfied and so $G_\infty/K_\infty$ is admissible.

\subsection{$U_n / K_n = \SU(2n)/\Sp(n)$}
This case corresponds to row $9$ in Table~\ref{classicalSymmetricPairs}. One has $\gg_n = \lsl(n,\H)$, $\gu_n = \su(2n)$ and $\gk_n = \lsp(n)$.  The involution is given by $\theta_n: A \mapsto J_nAJ_n^{-1}$, where $J_n$ is given by 
\begin{equation}
\label{J}
J_n = \left(
           \begin{array}{ccc}
             \inverseJblock{1} & &\\
             & \ddots & \\
             & & \inverseJblock{1}
            \end{array} 
   \right).
\end{equation}

One can also obtain the same symmetric space by using the involution $\widetilde{\theta}_n: A \mapsto \widetilde{J}_n A\widetilde{J}_n^{-1}$, where
\begin{equation}
\label{JTilde}
\widetilde{J}_n = \left(
           \begin{array}{ccc|ccc}
               & & & -1 &        &    \\ 
               & & &    & \ddots &    \\
               & & &    &        & -1 \\
             \hline
               1 &        &   & & &\\
                 & \ddots &   & & &\\
                 &        & 1 & & & 
            \end{array} 
   \right).
\end{equation}
The calculations will be easier if we use $\widetilde{\theta}_n$ instead of $\theta_n$.  However, we must use $\theta_n$ in order for the inclusions $U_n\rightarrow U_{n+1}$ to take the form of (\ref{inclusionTypeTwo}).   We can move freely between these pictures, however, because $J_n = E_{\sigma_n}J_n E_{\sigma_n}^{-1}$, where $E_{\sigma_n}\in M(2n,\C)$ is the permutation matrix corresponding to the permutation 
\[\sigma = (1\ n) (2\ (n+1)) \cdots ((n-1)\ 2n)\in S_{2n}.\]
In other words, the rows and columns are interwoven, so that the first $n$ basis elements of $\C^{2n}$ are mapped to odd-numbered basis elements and the final $n$ basis elements of $\C^{2n}$ are sent to even-numbered basis elements.

We proceed by using $\widetilde{\theta}_n$.  We have
\begin{align*}
          \su(2n) = \gu_n &  = \left\{\left.
                           \left(\begin{array}{cc}
                            A & B \\
                            -B^* & D
                           \end{array}\right)
                           \in\mathrm{M}(2n,\C)
                           \right|
                           \begin{array}{l}
                           A^* = -A\text{, } D^* = -D \text{, and} \\
                            \Tr(A) + \Tr(D) = 0
                           \end{array}
                      \right\}
         \\
          \lsp(n) \cong \gk_n & = \left\{\left.
                           \left(\begin{array}{cc}
                            A & B \\
                            -\overline{B} & \overline{A}
                           \end{array}\right)
                           \in\mathrm{M}(2n,\C)
                           \right|
                           \begin{array}{l}
                           A^* = -A \\
                           \text{and } B^\mathrm{T} = B
                           \end{array}
                      \right\}
          \\       
          i\gp_n  & =  \left\{\left.
                           \left(\begin{array}{cc}
                            A & B \\
                            \overline{B} & -\overline{A}
                           \end{array}\right)
                           \in\mathrm{M}(2n,\C)
                           \right|
                           \begin{array}{l}
                           A^* = -A, B^\mathrm{T} = -B,\\
                           \text{and } \Tr(A)=0
                           \end{array}
                      \right\}.
\end{align*}          

There is a $\widetilde{\theta}_n$-stable Cartan subalgebra
\[
   \widetilde{\gh}_n =\left\{\left.\left(
           \begin{array}{cccc}
             ia_1 &        & \\
                 & \ddots & \\
                 &        & ia_{2n}
             
            \end{array} 
   \right)\right|a_i\in\R \text{ and }\sum_{i=1}^{2n} a_i = 0\right\}
\]
for $\gg_n = \so^*(4n)$, and we can choose
\[
   i\widetilde{\ga}_n =\left\{\left.\left(
           \begin{array}{ccc|ccc}
             ia_1 &        &      & & &\\ 
                  & \ddots &      & & &\\
                  &        & ia_n & & &\\
             \hline
             & & & ia_{1} &        &\\
             & & &         & \ddots & \\
             & & &         &        & ia_n
            \end{array} 
   \right)\right|a_i\in\R \text{ and }\sum_{i=1}^n a_i = 0\right\}.
\]

We now proceed to the $\theta_n$ picture.  Conjugation of $\widetilde{\gh}_n$  by 
$E_{\sigma_n}$ (followed by renumbering the indices) yields the $\theta_n$-stable Cartan subalgebra
\[
   \gh_n = \widetilde{\gh}_n =\left\{\left.\left(
           \begin{array}{cccc}
             ia_1 &        & \\
                 & \ddots & \\
                 &        & ia_{2n}
             
            \end{array} 
   \right)\right|a_i\in\R \text{ and }\sum_{i=1}^{2n} a_i = 0\right\}.
\]
Finally, conjugation of $\widetilde{\ga}_n$ by $E_{\sigma_n}$ yields
\[
 i\ga_n =\left\{\left.\left(
           \begin{array}{ccccccc}
             i a_1 &        &      &          & & & \\ 
                  &  i a_1  &      &          & & & \\
                  &        & i a_2 &          & & & \\
                  &        &      & i a_2 & & & \\
             & & & & \ddots &          & \\
             & & & &        & i a_n & \\
             & & & &        &          & i a_n
            \end{array} 
   \right)\right|a_i\in\R \text{ and } \sum_{i=1}^n a_i = 0\right\}.
\]
While condition (\ref{embedFormTwo}) is not quite satisfied, we do have that
\begin{equation}
\label{embedFormThree}
\ga_{n+1}\subseteq \left(\begin{array}{cc}
                          \ga_n + \C \mathrm{Id} &  0\\
                            0   & *
                         \end{array}\right).
\end{equation}
Since $\gm_n$ centralizes $\ga_n$, it follows that $\gm_n$ commutes with $\ga_n + \C \mathrm{Id}$.  Thus by (\ref{embedFormThree}), it follows that $\gm_n$ commutes with $\ga_{n+1}$.
Thus $\gm_m\subseteq\gm_n$ for $m\leq n$, and it follows that $G_\infty/K_\infty$ is admissible.

\begin{comment}
One then has
\[
    \gm_n =\left\{\left.\left(
           \begin{array}{ccc|ccc}
             ia_1 &        &      & & &\\ 
                  & \ddots &      & & &\\
                  &        & ia_n & & &\\
             \hline
             & & & ia_{1} &        &\\
             & & &        & \ddots & \\
             & & &        &        & ia_n
            \end{array} 
   \right)\right|a_i\in\R \text{, and } \sum_{i=0}^{2n} a_i = 0 \right\}
\]
\end{comment}

\subsection{$U_n / K_n = \SO(4n)/\U(2n)$}        
This case corresponds to row $10_1$ in Table~\ref{classicalSymmetricPairs}. One has $\gg_n = \so^*(4n)$, $\gu_n = so(4n)$ and $\gk_n = \gu(2n)$.  The involution is given by $\theta_n: A \mapsto J_nAJ_n^{-1}$, where $J_n$ is given by~(\ref{J}).  
As in the previous example, one can also obtain the same symmetric space by using the involution $\widetilde{\theta}_n: A \mapsto \widetilde{J}_n A\widetilde{J}_n^{-1}$, where $\widetilde{J}_n$ is given by~(\ref{JTilde}).

We work first on the $\widetilde{\theta}_n$-side.  We have
\begin{align*}
          \so(4n) = \gu_n &  = \left\{\left.
                           \left(\begin{array}{cc}
                            A & B \\
                            -B^\mathrm{T} & D
                           \end{array}\right)
                           \in\mathrm{M}(4n,\R)
                           \right|
                           A^\mathrm{T} = -A\text{ and } D^\mathrm{T} = -D 
                      \right\}
         \\
          \gu(2n) \cong \gk_n & = \left\{\left.
                           \left(\begin{array}{cc}
                            A & B \\
                            -B & A
                           \end{array}\right)
                           \in\mathrm{M}(4n,\R)
                           \right|
                           \begin{array}{l}
                           A^\mathrm{T} = -A \\
                           \text{and } B^\mathrm{T} = B
                           \end{array}
                      \right\}
          \\       
          i\gp_n  & =  \left\{\left.
                           \left(\begin{array}{cc}
                            A & B \\
                            B & -A
                           \end{array}\right)
                           \in\mathrm{M}(4n,\R)
                           \right|
                           \begin{array}{l}
                           A^\mathrm{T} = -A \\
                           \text{and } B^\mathrm{T} = -B
                           \end{array}
                      \right\}.
\end{align*}          

There is a $\widetilde{\theta}_n$-stable Cartan subalgebra
\[
   \widetilde{\gh}_n =\left\{\left.\left(
           \begin{array}{cccc}
             \Jblock{a_1} & & &\\ 
             & \Jblock{a_2} & &\\
             & & \ddots & \\
             & & & \Jblock{a_{2n}}
            \end{array} 
   \right)\right|a_i\in\R\right\}
\]
and we can choose
\[
  i\widetilde{\ga}_n = \scalebox{0.95}{$\left\{\left.\left(
           \begin{array}{cccccc|cccccc}
               0  & a_1 &        &      &     & & & & & & \\ 
             -a_1 & 0   &        &      &     & & & & & & \\
                  &     & \ddots &      &     & & & & & & \\
                  &     &        & 0    & a_n & & & & & & \\
                  &     &        & -a_n & 0   & & & & & & \\    
               \hline
 & & & & &   &    0  & -a_1 &        &     &      \\ 
 & & & & &   &   a_1 & 0    &        &     &      \\
 & & & & &   &       &      & \ddots &     &      \\
 & & & & &   &       &      &        & 0   & -a_n \\
 & & & & &   &       &      &        & a_n & 0    \\  
            \end{array} 
   \right)\right|a_i\in\R\right\}
$}.\]

\begin{comment}
One then has
\[
    \gm_n =\left\{\left.\left(
           \begin{array}{cccccc|cccccc}
               0  & a_1 &        &      &     &     & b_1  &  c_1 &        &     & \\ 
             -a_1 & 0   &        &      &     &     & c_1  & -b_1 &        &     & \\
                  &     & \ddots &      &     &     &      &      & \ddots &     & \\
                  &     &        & 0    & a_n &     &      &      &        & b_n &  c_n\\
                  &     &        & -a_n & 0   &     &      &      &        & c_n & -b_n\\    
               \hline 
          -b_1 & -c_1 &        &      &      &   &   0  & a_1 &        &      &     \\ 
          -c_1 &  b_1 &        &      &      &   & -a_1 & 0   &        &      &     \\
               &      & \ddots &      &      &   &      &     & \ddots &      &     \\
               &      &        & -b_n & -c_n &   &      &     &        & 0    & a_n \\
               &      &        & -c_n & b_n  &   &      &     &        & -a_n & 0   
            \end{array} 
   \right) \right|  \begin{array}{l}
                     a_i\in\R \\
                     b_i\in\R \\
                     c_i\in\R \\
                     d\in\R
                    \end{array}
                \right\}
\]
\end{comment}

Moving to the $\theta_n$-picture, we conjugate everything by $E_{\sigma_n}$ and renumber the indices to arrive at the $\theta_n$-stable Cartan algebra
\[
\gh_n =
\scalebox{0.70}{$
   \left\{\left.\left(
           \begin{array}{cccc}
         \fourBlock{-a_1}{-a_2}{a_1}{a_2} & & &  \\
             & \fourBlock{-a_3}{a_4}{a_3}{a_4} & & \\
             & & \ddots & \\
             & & & \fourBlock{-a_{2n-1}}{-a_{2n}}{a_{2n-1}}{a_{2n}}
            \end{array} 
   \right)\right|a_i\in\R\right\}
   $},
\]
and finally
\[
  i\ga_n =
  \scalebox{0.75}{$\left\{\left.\left(
           \begin{array}{cccc}
         \fourBlock{-a_1}{a_1}{a_1}{-a_1} & & &  \\
             & \fourBlock{-a_2}{a_2}{a_2}{-a_2} & & \\
             & & \ddots & \\
             & & & \fourBlock{-a_n}{a_n}{a_n}{-a_n}
            \end{array} 
   \right)\right|a_i\in\R\right\}
   $}.
\]
Hence $\ga_n$ is block-diagonal, and moving from $\ga_n$ to $\ga_{n+1}$ is simply a matter of adding another $4\times 4$ block.  Thus we see that condition (\ref{embedFormTwo}) is satisfied and hence $G_\infty/K_\infty$ is admissible.

\subsection{$U_n / K_n = \SO(2(2n+1))/\U(2n+1)$}
This case corresponds to row $10_2$ in Table~\ref{classicalSymmetricPairs}. One has $\gg_n = \so^*(2(2n+1))$, $\gu_n = so(4n)$ and $\gk_n = \gu(2n)$. As in the previous example, one can also obtain the same symmetric space by using the involution $\widetilde{\theta}_n: A \mapsto \widetilde{J}_n A\widetilde{J}_n^{-1}$, where $\widetilde{J}_n$ is given by~(\ref{JTilde}).

We first work on the $\widetilde{\theta}_n$ side.  We then have
\begin{align*}
          \so(2(2n+1)) = \gu_n &  = \left\{\left.
                           \left(\begin{array}{cc}
                            A & B \\
                            -B^\mathrm{T} & D
                           \end{array}\right)
                           \in\mathrm{M}(2(2n+1),\R)
                           \right|
                              \begin{array}{l}
                           A^\mathrm{T} = -A \\
                          \text{and }D^\mathrm{T} = -D 
                              \end{array}
                      \right\}
         \\
          \gu(2n+1) \cong \gk_n & = \left\{\left.
                           \left(\begin{array}{cc}
                            A & B \\
                            -B & A
                           \end{array}\right)
                           \in\mathrm{M}(2(2n+1),\R)
                           \right|
                           \begin{array}{l}
                           A^\mathrm{T} = -A \\
                           \text{and } B^\mathrm{T} = B
                           \end{array}
                      \right\}
          \\       
          i\gp_n  & =  \left\{\left.
                           \left(\begin{array}{cc}
                            A & B \\
                            B & -A
                           \end{array}\right)
                           \in\mathrm{M}(2(2n+1),\R)
                           \right|
                           \begin{array}{l}
                           A^\mathrm{T} = -A \\
                           \text{and } B^\mathrm{T} = -B
                           \end{array}
                      \right\}.
\end{align*}          

There is a $\widetilde{\theta}_n$-stable Cartan subalgebra
\[
\widetilde{\gh}_n = 
\scalebox{0.74}{$
   \left\{\left.\left(
           \begin{array}{cccccc|cccccc}
         0 &        &     &        &          &         & a_1 & & & & & \\ 
           &     0  & a_2 &        &          &         &     & & & & & \\ 
           &   -a_2 & 0   &        &          &         &     & & & & & \\
           &        &     & \ddots &          &         &     & & & & & \\
           &        &     &        & 0        & a_{n+1} &     & & & & & \\
           &        &     &        & -a_{n+1} & 0       &     & & & & & \\    
               \hline
     -a_1 & & & & & & 0 &          &         &        &           &     \\                    
          & & & & & &   &    0     & a_{n+2} &        &           &     \\ 
          & & & & & &   & -a_{n+2} & 0       &        &           &     \\
          & & & & & &   &          &         & \ddots &           &     \\
          & & & & & &   &          &         &        & 0         & a_{2n+1} \\
          & & & & & &   &          &         &        & -a_{2n+1} & 0   \\
            \end{array} 
   \right)\right|a_i\in\R\right\}
   $}.
\]
 and we can choose
\[
   i\widetilde{\ga}_n = \scalebox{0.9}{$\left\{\left.\left(
           \begin{array}{cccccc|cccccc}
           0 &       &     &        &      &     & & & & & & \\  
             &    0  & a_1 &        &      &     & & & & & & \\ 
             &  -a_1 & 0   &        &      &     & & & & & & \\
             &       &     & \ddots &      &     & & & & & & \\
             &       &     &        & 0    & a_n & & & & & & \\
             &       &     &        & -a_n & 0   & & & & & & \\    
               \hline
 & & & & & & 0 &       &      &        &     &      \\                
 & & & & & &   &    0  & -a_1 &        &     &      \\ 
 & & & & & &   &   a_1 & 0    &        &     &      \\
 & & & & & &   &       &      & \ddots &     &      \\
 & & & & & &   &       &      &        & 0   & -a_n \\
 & & & & & &   &       &      &        & a_n & 0    \\  
            \end{array} 
   \right)\right|a_i\in\R\right\}
$}.\]

\begin{comment}
One then has
\[
    \gm_n =\left\{\left.\left(
           \begin{array}{cccccc|cccccc}
         0 &        &     &        &      &     & d   &      &      &        &     & \\ 
           &     0  & a_1 &        &      &     &     & b_1  &  c_1 &        &     & \\ 
           &   -a_1 & 0   &        &      &     &     & c_1  & -b_1 &        &     & \\
           &        &     & \ddots &      &     &     &      &      & \ddots &     & \\
           &        &     &        & 0    & a_n &     &      &      &        & b_n &  c_n\\
           &        &     &        & -a_n & 0   &     &      &      &        & c_n & -b_n\\    
               \hline 
       -d &      &      &        &      &      & 0 &      &     &        &      &     \\                    
          & -b_1 & -c_1 &        &      &      &   &   0  & a_1 &        &      &     \\ 
          & -c_1 &  b_1 &        &      &      &   & -a_1 & 0   &        &      &     \\
          &      &      & \ddots &      &      &   &      &     & \ddots &      &     \\
          &      &      &        & -b_n & -c_n &   &      &     &        & 0    & a_n \\
          &      &      &        & -c_n & b_n  &   &      &     &        & -a_n & 0   
            \end{array} 
   \right) \right|  \begin{array}{l}
                     a_i\in\R \\
                     b_i\in\R \\
                     c_i\in\R \\
                     d\in\R
                    \end{array}
                \right\}
\]
\end{comment}

Moving to the $\theta_n$-picture, we conjugate everything by $E_{\sigma_n}$ and renumber the indices to arrive at the $\theta_n$-stable Cartan algebra
\[\gh_n =\scalebox{.79}{$
   \left\{\left.\left(
           \begin{array}{cccc}
           \Jblock{a_1} & & & \\
      &  \fourBlock{-a_2}{-a_3}{a_2}{a_3} & &  \\
             & & \ddots & \\
             & & & \fourBlock{-a_{2n-1}}{-a_{2n}}{a_{2n-1}}{a_{2n}}
            \end{array} 
   \right)\right|a_i\in\R\right\}
$},\]
and finally
\[
   i\ga_n =\scalebox{0.90}{$\left\{\left.\left(
           \begin{array}{cccc}
           \ZeroBlock & & & \\
       &   \fourBlock{-a_1}{a_1}{a_1}{-a_1} & &  \\
             & & \ddots & \\
             & & & \fourBlock{-a_n}{a_n}{a_n}{-a_n}
            \end{array} 
   \right)\right|a_i\in\R\right\}
$}.\]
Hence $\ga_n$ is block-diagonal, and moving from $\ga_n$ to $\ga_{n+1}$ is simply a matter of adding another $4\times 4$ block.  Thus we see that condition (\ref{embedFormTwo}) is satisfied and hence $G_\infty/K_\infty$ is admissible.

\end{document}